\newtheorem{Ass}{Assumption}
\newtheorem{Thm}{Theorem}
\newtheorem{Prop}[Thm]{Proposition}
\newtheorem{Lem}{Lemma}
\newtheorem*{Prop*}{Proposition}
\newtheorem*{Cor*}{Corollary}
\newtheorem*{Thm*}{Theorem}
\apptocmd{\sloppy}{\hbadness 10000\relax}{}{}
\begin{document}
	%%%%%%%%%%%%%%%%%%%%%%%%%%%%%%%%%%%%%%%%%%%%%%%%%%%%%%%%%%%%%%%%%%%%%
\title[Degenerate three species reaction-diffusion system]{Convergence to equilibrium for\\ a degenerate three species reaction-diffusion system}

\bibliographystyle{plain}
	
\author[Saumyajit Das]{Saumyajit Das}
\address{S.D.: Department of Mathematics, Indian Institute of Technology Bombay, Powai, Mumbai 400076 India.}
	\email{194099001@iitb.ac.in}

\author[Harsha Hutridurga]{Harsha Hutridurga}
\address{H.H.: Department of Mathematics, Indian Institute of Technology Bombay, Powai, Mumbai 400076 India.}
	\email{hutridurga@iitb.ac.in}

\maketitle

\begin{abstract}
In this work, we study a $3\times 3$ triangular reaction-diffusion system. Our main objective is to understand the long time behaviour of solutions to this reaction-diffusion system when there are degeneracies. More precisely, we treat cases when one of the diffusion coefficients vanishes while the other two diffusion coefficients stay positive. We prove convergence to equilibrium type results. In all our results, the constants appearing in the decay estimates are explicit.
\end{abstract}

\vspace{0.5cm}

{\bf Key words.} Reaction-Diffusion, Entropy method, Degenerate diffusion, Large time behaviour\\

{\bf Mathematics Subject Classification.} 35K57, 35K65, 35Q92, 92E20, 39B62
	
\section{Introduction}\label{sec:introduction}
Reaction-diffusion equations are among the most widely used differential equations in applications. These equations govern the evolution (in time) of species concentrations at various spatial locations that are simultaneously diffusing and undergoing chemical reactions. We consider a reaction-diffusion system that concerns the three species $\mathcal{X}_1, \mathcal{X}_2$ and $\mathcal{X}_3$ involved in the following reversible reaction:
\begin{center}
\ch{$\mathcal{X}_1$ + $\mathcal{X}_2$ <>[ ][ ] $\mathcal{X}_3$}
\end{center}
The spatial domain is taken to be a bounded domain $\Omega\subset\mathbb{R}^N$ with $\mathrm C^{2+\nu}$ boundary with $\nu >0$. For the unknowns $a,b,c:[0,T)\times\Omega\to\mathbb{R}$, representing the concentrations of the three species $\mathcal{X}_1, \mathcal{X}_2, \mathcal{X}_3$ respectively, we consider the following system of differential equations:
\begin{equation}\label{eq:model}
\left\{
\begin{aligned}
\partial_{t}a - d_{a} \Delta a & = c-ab \qquad \qquad \mbox{ in } (0,T)\times\Omega,
\\
\partial_{t}b - d_{b} \Delta b & = c-ab\qquad \qquad \mbox{ in } (0,T)\times\Omega,
\\
\partial_{t}c - d_{c} \Delta c & =  ab-c\qquad \qquad \mbox{ in } (0,T)\times\Omega,
\\
\nabla_{x} a\cdot n=\nabla_{x} b\cdot n & =\nabla_{x} c\cdot n=0 \quad \mbox{ on } (0,T) \times \partial\Omega,
\\
a(0,\cdot)=a_0;\, b(0,\cdot)=b_0; \, & c(0,\cdot)=c_0 \qquad \qquad \mbox{ in } \Omega.
\end{aligned}
\right.
\end{equation}
Here $n(x)$ denotes the outward unit normal to $\Omega$ at the point $x\in\partial\Omega$. The initial data $a_{0}, b_{0}, c_{0}$ are taken to be smooth up to the closure of the domain and strictly positive. The diffusion coefficients $d_{a},d_{b},d_{c}$ are taken to be nonnegative. When all the diffusion coefficients are strictly positive (referred to, now onwards, as the non-degenerate setting), it is well-known that a unique strictly positive global-in-time $\mathrm{C}^{\infty}$ solution exists for the above model (see \cite{Pierre2003, Pie2010}), where the positiveness comes from the quasi-positive nature of the rate function \cite{rothe06, Amann1985, quittner2019superlinear}. The long time behaviour of solutions to \eqref{eq:model} in the non-degenerate setting was addressed in \cite{DF06} using the method of entropy. In this method, a Lyapunov functional (termed entropy) is found for the evolution equation. The negative of the time derivative of this entropy functional is referred to as the entropy dissipation functional. The entropy dissipation functional is then related back to the relative entropy via a functional inequality. This will then be followed by a Gr\"onwall type argument to deduce convergence of relative entropy to zero. A Czisz\'ar-Kullback-Pinsker type inequality that relates relative entropy and the $\mathrm L^1$-norm helps the authors in \cite{DF06} to prove the convergence to equilibrium in the $\mathrm L^1(\Omega)$-norm and to deduce that the decay is exponentially fast in time. In the proof of \cite{DF06}, uniform boundedness of the solution $(a,b,c)$ to \eqref{eq:model} plays an important role. This uniform bound is available in the non-degenerate case.

In this paper, we discuss the long time behaviour of solutions to a couple of degenerate cases of the three species model \eqref{eq:model}. Our work is heavily inspired by \cite{EMT20} which dealt with a four species degenerate model where one of the species stops diffusing. The authors in \cite{EMT20} demonstrated a so-called \emph{indirect diffusion effect} wherein an effective diffusion is felt by the non-diffusive species, thanks to the interplay between the diffusion from diffusive species and the underlying reversible reaction. Our work demonstrates that a similar indirect diffusion effect is present in the above three species model in the presence of degeneracies. The two degenerate reaction-diffusion systems that we study in this article are:
\begin{equation}\label{eq:model 1}
\left\{
\begin{aligned}
\partial_{t}a - d_{a} \Delta a & = c-ab \qquad \qquad \mbox{ in } (0,T)\times\Omega,
\\
\partial_{t}b & = c-ab\qquad \qquad \mbox{ in } (0,T)\times\Omega,
\\
\partial_{t}c- d_{c} \Delta c & =  ab-c\qquad \qquad \mbox{ in } (0,T)\times\Omega,
\\
\nabla_{x} a\cdot n=\nabla_{x} b \cdot n & =\nabla_{x} c\cdot n=0 \quad \mbox{ on } (0,T) \times \partial\Omega,
\\
a(0,\cdot)=a_0;\, b(0,\cdot)=b_0; \, & c(0,\cdot)=c_0 \qquad \qquad \mbox{ in } \Omega,
\end{aligned}
\right.
\end{equation}
and
\begin{equation}\label{eq:model 11}
\left\{
\begin{aligned}
\partial_{t}a- d_{a} \Delta a & = c-ab \qquad \qquad \mbox{ in } (0,T)\times\Omega,
\\
\partial_{t}b-d_b \Delta b & = c-ab\qquad \qquad \mbox{ in } (0,T)\times\Omega,
\\
\partial_{t}c & =  ab-c\qquad \qquad \mbox{ in } (0,T)\times\Omega,
\\
\nabla_{x} a\cdot n=\nabla_{x} b\cdot n & =\nabla_{x} c\cdot n=0 \quad \mbox{ on } (0,T) \times \partial\Omega,
\\
a(0,\cdot)=a_0;\, b(0,\cdot)=b_0; \, & c(0,\cdot)=c_0 \qquad \qquad \mbox{ in } \Omega.
\end{aligned}
\right.
\end{equation}
Here, the initial data are assumed to be smooth and strictly positive. The diffusion coefficients $d_a,d_c$ in \eqref{eq:model 1} and the diffusion coefficients $d_a,d_b$ in \eqref{eq:model 11} are assumed to be strictly positive. The existence of a smooth global-in-time positive solution to \eqref{eq:model 1} was proved in \cite[Theorem 3.1]{DF15}. The existence of a smooth global-in-time positive solution to \eqref{eq:model 11} was proved in \cite[Theorem 3.2]{DF15} if the dimension $N\le3$. The authors in \cite{DF15}, however, prove the existence of a weak global-in-time positive solution to \eqref{eq:model 11} in any arbitrary dimension. 

The process of establishing various estimates in \cite{EMT20} has greatly motivated us to establish similar kind of estimates in dimension larger than three for the degenerate case \eqref{eq:model 1} corresponding to the vanishing of the diffusion coefficient $d_b$. We further use various Neumann Green's function results from \cite{rothe06, Morra83, ML15, FMT20} to get various estimates specifically for dimensions less than four.

Observe that both the models \eqref{eq:model 1} and \eqref{eq:model 11} satisfy the following mass conservation properties:
\begin{align} 
0<M_{1} \vert \Omega \vert =: \int_{\Omega} \big( a(t,x)+c(t,x) \big)\, {\rm d}x = \int_{\Omega} \big( a(0,x)+c(0,x) \big)\, {\rm d}x,\label{mass conserve 1}
\\
0<M_{2} \vert \Omega \vert =: \int_{\Omega} \big( b(t,x)+c(t,x) \big)\, {\rm d}x = \int_{\Omega} \big( b(0,x)+c(0,x) \big)\, {\rm d}x.\label{mass conserve 2} 
\end{align}
An homogenous (constant) equilibrium state $(a_\infty,b_\infty, c_\infty)$ associated with these models should also satisfy the above conservation properties. Hence we should have
\begin{align}\label{eq:equi-state-1}
a_{\infty}+c_{\infty}=M_{1}, \qquad b_{\infty}+c_{\infty}=M_{2}.
\end{align}
Moreover, at equilibrium, the rate function should vanish, i.e.
\begin{align}\label{eq:equil-relation}
c_{\infty}=a_{\infty}b_{\infty}.
\end{align}
The relations \eqref{eq:equi-state-1} and \eqref{eq:equil-relation} put together leads to a quadratic equation for $c_\infty$ whose only admissible non-negative solution is
\begin{align}\label{eq:equi-state-2}
c_{\infty} =\frac{1}{2}(1+M_{1}+M_{2})-\frac{1}{2}\sqrt{(1+M_{1}+M_{2})^2-4M_{1}M_{2}}.
\end{align}
The corresponding $a_\infty$ and $b_\infty$ can be computed using \eqref{eq:equi-state-1}. Now onwards, we will be considering this unique homogeneous equilibrium state $(a_\infty,b_\infty, c_\infty)$. Next, we list a bunch of notations which will be used throughout this manuscript. These notations are inspired by those used in \cite{DF06}.
\begin{itemize}
\item The square roots of the species concentrations are denoted as
\[
A:= \sqrt{a}, \quad B:= \sqrt{b}=B, \quad C:=\sqrt{c}.
\]
\item The square roots of the homogeneous equilibrium states are denoted as
\[
A_{\infty} := \sqrt{a_{\infty}}, \quad B_{\infty} := \sqrt{b_{\infty}}, \quad C_{\infty} := \sqrt{c_{\infty}}.
\]
\item The average of a function $f:\Omega\to \mathbb{R}$ is denoted as
\[
\overline{f} := \frac{1}{\vert \Omega\vert}\int_{\Omega}f(x)\, {\rm d}x.
\]
\item The deviations of the square roots of species concentrations from their averages are denoted as
\[
\delta_{A} := A - \overline{A}, \qquad \delta_{B} := B - \overline{B}, \qquad \delta_{C} := C - \overline{C}.
\]
\item The parabolic cylinders are denotes as
\[
\Omega_{\tau,T} := (\tau,T)\times\Omega \qquad \mbox{ for }\, 0\leq\tau<T.
\]
\item The lateral boundary of the parabolic cylinders are denoted as
\[
\partial\Omega_{\tau,T} := (\tau,T)\times\partial\Omega \qquad \mbox{ for }\, 0\leq\tau<T.
\]
\end{itemize} 
We consider the following entropy functional associated with \eqref{eq:model 1} and \eqref{eq:model 11}:
\begin{align}\label{entropy} 
E(a,b,c) := \int_{\Omega} \Big((a(\ln a-1)+1)+(b(\ln b-1)+1)+(c(\ln c-1)+1)\Big) {\rm d}x.
\end{align}	
While studying \eqref{eq:model 1} in dimensions $N\ge4$, we have been able to arrive at large time asymptotics of the solution only under certain closeness assumption on the non-zero diffusion coefficients. The precise assumption is the following:
\begin{Ass}
The non-zero diffusion coefficients $d_a$ and $d_c$ are said to satisfy the closeness assumption if
\begin{align}\label{closeness condition imp}
\vert{d_a-d_c}\vert < \frac{2}{C^{PRC}_{{\frac{d_a+d_c}{2}},p'}}
\qquad 
\text{ and } 
\qquad \frac{\vert d_a-d_c \vert}{d_a+d_c}<\frac{1}{C_{SOR}(\Omega,N,p')},
\end{align}
where the constants $C^{PRC}_{{\frac{d_a+d_c}{2}},p'}$ and $C_{SOR}(\Omega,N,p')$, are the parabolic regularity constant (see Theorem \ref{PE} in the Appendix) and the second order regularity constant (see Theorem \ref{estimation 1} in the Appendix), respectively.
	\end{Ass}
	%A work in the article \cite{DF_06} shows  us if \ $a,b,c,a_{\infty},b_{\infty},c_{\infty}$\  satisfy mass conservation properties,  we have the following upper bound of entropy in terms of \ $L^2(\Omega)$ \ norm :
	%\begin{flalign} 
	%E(a,b,c)-& E(a_{\infty},b_{\infty},c_{\infty}) \nonumber \\ & \leq \Tilde{C} \Big(\Vert \delta_{A} \Vert^2_{L^2(\Omega)}+\Vert \delta_{B} \Vert^2_{L^2(\Omega)}+\Vert \delta_{C} \Vert^2_{L^2(\Omega)}+\Vert AB-C \Vert^2_{L^2(\Omega)}\Big)
	%\label{entropy upper bound}
	%\end{flalign}
	%where \ $\Tilde{C}$ \ depends only on the equilibrium states i.e \ $a_{\infty},b_{\infty},c_{\infty}$\\
	%Now we are introducing a condition on diffusion coefficients which will be very crucial in proving various estimates. We will call this condition as \textbf{Closeness Condition:}\\
	%We say $d_a \ \text{and}\ d_c$ satisfy closeness condition or close enough if they satisfy the following condition:
	%\begin{gather}
	%	\vert{d_a-d_c}\vert < \frac{1}{C^{PRC}_{{\frac{d_a+d_c}{2}},p^{'}}} \ \ \ \text{and} \ \ \  \frac{\vert d_a-d_c \vert}{d_a+d_c}<\frac{1}{C_{SOR}(\Omega,N,p^{'})} \label{closeness condition imp}
	%\end{gather}
	%where 
Now we are going to state our two main results.
\begin{Thm}\label{convergence theorem 1}
For $N\geq4$, let $(a,b,c)$ be the solution to the degenerate system \eqref{eq:model 1}. Let $(a_\infty,b_\infty,c_\infty)$ be the associated equilibrium state given by \eqref{eq:equi-state-1}-\eqref{eq:equi-state-2}. Let the nonzero diffusion coefficients $d_a,d_c$ satisfy the closeness condition \eqref{closeness condition imp}. Then, for any given positive $\varepsilon \ll1$, there exists a time $T_{\varepsilon}$ and two positive constants $\mathcal{S}_1$ and $\mathcal{S}_2$ such that for $t\ge T_\varepsilon$, we have
\[
\frac{1}{2M_{1}}\Vert a-a_{\infty}\Vert_{L^1(\Omega)}^{2}+\frac{1}{2M_{2}}\Vert b-b_{\infty}\Vert_{L^1(\Omega)}^{2}+ \frac{1}{(M_{1}+M_{2})}\Vert c-c_{\infty}\Vert_{L^1(\Omega)}^{2}
\leq \frac{(9+2\sqrt{2})}{(3+2\sqrt{2})\vert \Omega \vert} \mathcal{S}_1 e^{-\mathcal{S}_2(1+t)^{\frac{1-\epsilon}{N-1}}}.
\]
For $N<4$, let $(a,b,c)$ be the solution to the degenerate system \eqref{eq:model 1}. Let $(a_\infty,b_\infty,c_\infty)$ be the associated equilibrium state given by \eqref{eq:equi-state-1}-\eqref{eq:equi-state-2}. Then, for any given positive $\varepsilon \ll1$, there exists a time $T_{\varepsilon}$ and two positive constants $\mathcal{S}_3$ and $\mathcal{S}_4$ such that for $t\ge T_\varepsilon$, we have
\[
\frac{1}{2M_{1}}\Vert a-a_{\infty}\Vert_{L^1(\Omega)}^{2}+\frac{1}{2M_{2}}\Vert b-b_{\infty}\Vert_{L^1(\Omega)}^{2}+ \frac{1}{(M_{1}+M_{2})}\Vert c-c_{\infty}\Vert_{L^1(\Omega)}^{2}
\leq \frac{(9+2\sqrt{2})}{(3+2\sqrt{2})\vert \Omega \vert} \mathcal{S}_3 \, e^{-\mathcal{S}_4(1+t)^{\frac{1-\epsilon}{6}}}.
\]
\end{Thm}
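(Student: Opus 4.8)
The plan is to run the entropy method sketched in the introduction, but with a \emph{time-dependent} functional inequality that compensates for the missing diffusion of $b$. First I would compute the entropy dissipation $D := -\tfrac{\mathrm d}{\mathrm dt}E(a,b,c)$. Using the equations \eqref{eq:model 1}, the Neumann boundary conditions, integration by parts, and the identity $\nabla a = 2A\,\nabla A$ (and similarly for $c$), one obtains
\[
D = 4 d_a \int_\Omega |\nabla A|^2\,\mathrm dx + 4 d_c \int_\Omega |\nabla C|^2\,\mathrm dx + \int_\Omega (ab-c)\ln\frac{ab}{c}\,\mathrm dx .
\]
Every term is nonnegative, so $E$ is a Lyapunov functional; the obstruction is immediate, namely that $D$ carries \emph{no} $\int_\Omega|\nabla B|^2$ term because $d_b=0$, and hence provides no direct control on the spatial oscillation $\delta_B$ of $b$.

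Second, I would bound the relative entropy $E(a,b,c)-E(a_\infty,b_\infty,c_\infty)$ from above by the $L^2$-norms of the deviations $\delta_A,\delta_B,\delta_C$ together with the deviations of the averages $\overline A,\overline B,\overline C$ from $A_\infty,B_\infty,C_\infty$, using the conservation laws \eqref{mass conserve 1}--\eqref{mass conserve 2}. By the Poincar\'e inequality the first two terms of $D$ dominate $\|\delta_A\|_{L^2}^2$ and $\|\delta_C\|_{L^2}^2$, while the reaction term controls the chemical mismatch between the averages. The genuinely hard step is to recover control of $\|\delta_B\|_{L^2}^2$: this is the indirect diffusion effect. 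The idea is that the reaction term forces $ab$ close to $c$ in an integrated sense, and since $\delta_A,\delta_C$ are already small, $b\approx c/a$ must inherit spatial regularity from $a$ and $c$. Making this quantitative requires uniform-in-space bounds on the solution, and this is exactly where the degeneracy bites: with $d_b=0$ no time-uniform $L^\infty$ bound is available.

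Third, and this I expect to be the main obstacle, I would establish a priori bounds on $(a,b,c)$ that are allowed to grow polynomially in $t$, via the parabolic-regularity estimate (Theorem \ref{PE}) for $N\ge4$ and via the Neumann Green's function estimates for $N<4$. The growth rate of these bounds is precisely what degrades the functional inequality into the time-dependent form
\[
D \;\ge\; \frac{\mathcal K}{(1+t)^{\alpha}}\,\big(E(a,b,c)-E(a_\infty,b_\infty,c_\infty)\big),
\qquad 1-\alpha=\frac{1-\epsilon}{N-1},
\]
with the analogous exponent $\tfrac{1-\epsilon}{6}$ in low dimensions; the loss of $\epsilon$ and the threshold time $T_\varepsilon$ enter through the interpolation used to close the indirect-diffusion estimate. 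For $N\ge4$ the closeness condition \eqref{closeness condition imp} is invoked so that the second-order regularity estimate (Theorem \ref{estimation 1}) can absorb the cross-diffusion error $|d_a-d_c|$ while keeping all constants explicit; the Green's function bounds make this assumption unnecessary when $N<4$, at the price of the fixed exponent.

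Finally, writing $E_\infty:=E(a_\infty,b_\infty,c_\infty)$ and $E_0:=E(a_0,b_0,c_0)$, the differential inequality $\tfrac{\mathrm d}{\mathrm dt}\big(E-E_\infty\big)\le -\mathcal K(1+t)^{-\alpha}\big(E-E_\infty\big)$ integrates (using $\int_0^t(1+s)^{-\alpha}\,\mathrm ds\sim(1+t)^{1-\alpha}$ for $\alpha<1$) to $E-E_\infty\le(E_0-E_\infty)\exp\!\big(-\mathcal S(1+t)^{1-\alpha}\big)$ for $t\ge T_\varepsilon$, which is the stretched-exponential decay in the statement. A Csisz\'ar--Kullback--Pinsker inequality applied species by species then converts this entropy decay into the weighted sum of squared $L^1$-norms on the left-hand side, the explicit prefactor $\tfrac{9+2\sqrt2}{(3+2\sqrt2)|\Omega|}$ arising from tracking the three CKP constants. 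I expect the indirect-diffusion inequality of the second step, coupled with the bookkeeping of the time-dependent constant in the third, to be by far the most delicate portion of the argument.
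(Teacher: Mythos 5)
Your proposal is correct and follows essentially the same route as the paper: the entropy--dissipation identity, the recovery of the missing $\Vert\delta_B\Vert_{L^2}^2$ via the indirect diffusion effect (the paper's Propositions \ref{main relatio} and \ref{2.0.2}, built on the polynomially growing integrability bounds for $b$ and the closeness condition for $N\ge4$), the resulting time-degraded functional inequality with exponent $1-\alpha=\frac{1-\epsilon}{N-1}$ (resp.\ $\frac{1-\epsilon}{6}$), Gr\"onwall integration, and the Czisz\'ar--Kullback--Pinsker inequality. The only minor bookkeeping difference is that in the paper the $\epsilon$-loss and the threshold $T_\varepsilon$ arise from absorbing the $\ln(1+t)$ factor (coming from the $L^\infty$ growth bounds inside the $\Gamma$-function estimate of the relative entropy) rather than from the interpolation step, but this does not affect the argument.
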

Observe that our above result is unconditional when the dimension $N\le 3$, i.e. there is no closeness assumption on the non-zero diffusion coefficients $d_a$ and $d_c$. Furthermore, the constants $M_1$ and $M_2$ are determined by the initial data (see \eqref{mass conserve 1} and \eqref{mass conserve 2}).
	\begin{Thm}\label{convergence theorem 2}
Let $N\leq3$ and let $(a,b,c)$ be the solution to the degenerate system \eqref{eq:model 11}. Let $(a_\infty,b_\infty,c_\infty)$ be the associated equilibrium state given by \eqref{eq:equi-state-1}-\eqref{eq:equi-state-2}. Then, for any given positive $\varepsilon \ll1$, there exists a time $T_{\varepsilon}$ and two positive constants $\mathcal{S}_5$ and $\mathcal{S}_6$ such that for $t\ge T_\varepsilon$, we have
\[
\frac{1}{2M_{1}}\Vert a-a_{\infty}\Vert_{L^1(\Omega)}^{2}+\frac{1}{2M_{2}}\Vert b-b_{\infty}\Vert_{L^1(\Omega)}^{2}+ \frac{1}{(M_{1}+M_{2})}\Vert c-c_{\infty}\Vert_{L^1(\Omega)}^{2}
\leq \frac{(9+2\sqrt{2})}{(3+2\sqrt{2})\vert \Omega \vert} \mathcal{S}_5 \, e^{-\mathcal{S}_6(1+t)^{\frac{2-\epsilon}{3}}}.
\]
\end{Thm}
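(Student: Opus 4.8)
The plan is to run the entropy method of \cite{DF06}, modified to accommodate the degeneracy $d_c=0$ through an \emph{indirect diffusion} argument in the spirit of \cite{EMT20}. First I would differentiate the entropy \eqref{entropy} along the flow of \eqref{eq:model 11}; writing the concentrations through their square roots $A,B,C$ and using the Neumann conditions, the entropy dissipation comes out as
\[
D(t) := -\frac{{\rm d}}{{\rm d}t}E(a,b,c) = 4d_a\int_\Omega |\nabla A|^2\,{\rm d}x + 4d_b\int_\Omega |\nabla B|^2\,{\rm d}x + \int_\Omega (ab-c)\ln\tfrac{ab}{c}\,{\rm d}x,
\]
and each term is nonnegative, so $E$ is a Lyapunov functional and the relative entropy $\mathcal{E}(t):=E(a,b,c)-E(a_\infty,b_\infty,c_\infty)$ is nonincreasing. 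The structural difficulty is visible already here: since $d_c=0$ there is no $|\nabla C|^2$ term, so $D$ offers no direct control of the spatial oscillation $\delta_C=C-\overline{C}$ of the non-diffusive species.

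The core of the argument is an entropy--entropy-dissipation (EED) estimate bounding $\mathcal{E}$ in terms of $D$. The two gradient terms bound $\|\delta_A\|_{L^2(\Omega)}^2$ and $\|\delta_B\|_{L^2(\Omega)}^2$ by the Poincar\'e--Wirtinger inequality, while the reaction term controls $\|AB-C\|_{L^2(\Omega)}^2$ (using the elementary bound $(x-y)\ln\tfrac{x}{y}\ge c_0(\sqrt{x}-\sqrt{y})^2$) and, via the conservation laws \eqref{mass conserve 1}--\eqref{mass conserve 2}, the deviations of the spatial averages from the equilibrium values. The indirect diffusion effect is then used to recover $\|\delta_C\|_{L^2(\Omega)}$: writing $C=AB-(AB-C)$ gives $\|\delta_C\|_{L^2(\Omega)}\le \|AB-\overline{AB}\|_{L^2(\Omega)}+\|AB-C\|_{L^2(\Omega)}$, and the oscillation of the product is estimated by $\|AB-\overline{AB}\|_{L^2(\Omega)}\le \|A\|_{L^\infty(\Omega)}\|\delta_B\|_{L^2(\Omega)}+\overline{B}\,\|\delta_A\|_{L^2(\Omega)}$. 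Thus $\delta_C$ is controlled by $D$, but at the cost of a constant depending on an $L^\infty$ bound of a diffusing species. I expect this transfer, and the explicit bookkeeping of how its constant depends on the size of the solution, to be the main obstacle.

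The constant produced by the EED estimate is not uniform in time, and this is precisely what degrades pure exponential decay into the stretched exponential of the statement. I would therefore show that the relevant norms of $(a,b,c)$ — in particular the $L^\infty$ norms of the diffusing species entering the product estimate above — grow at most polynomially in $t$. For $N\le 3$ this is where the Neumann Green's function bounds from \cite{rothe06, Morra83, ML15, FMT20}, together with the parabolic and second-order regularity estimates recalled in the Appendix, are used in a duality/representation argument; since no uniform-in-time bound is available for the degenerate problem, one only obtains polynomial growth, the admissible exponent being pushed down to its threshold by taking the conjugate integrability exponent $p'$ close to $1$ (the origin of the free parameter $\varepsilon$). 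Inserting this growth into the EED estimate yields a functional inequality with a time-dependent constant, $D(t)\ge K(t)^{-1}\mathcal{E}(t)$ with $K(t)\le C(1+t)^{\rho}$ and $\rho=\tfrac{1+\varepsilon}{3}$.

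It then remains to integrate the differential inequality $-\mathcal{E}'(t)=D(t)\ge K(t)^{-1}\mathcal{E}(t)$. For $t\ge T_\varepsilon$ this gives
\[
\mathcal{E}(t)\le \mathcal{E}(T_\varepsilon)\exp\Big(-\kappa\!\int_{T_\varepsilon}^{t}(1+s)^{-\rho}\,{\rm d}s\Big)\le \mathcal{S}_5\,e^{-\mathcal{S}_6(1+t)^{\frac{2-\varepsilon}{3}}},
\]
because $1-\rho=\tfrac{2-\varepsilon}{3}$. Finally, a Csisz\'ar--Kullback--Pinsker inequality converts this decay of the relative entropy into the weighted $L^1$ bound asserted in the theorem, the explicit prefactor $\tfrac{9+2\sqrt{2}}{(3+2\sqrt{2})|\Omega|}$ arising from the CKP constants together with the normalisations by $M_1$ and $M_2$.
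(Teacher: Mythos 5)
Your overall skeleton (entropy dissipation, recovery of the missing $\delta_C$ term from $\Vert AB-C\Vert_{\mathrm L^2(\Omega)}$, polynomially growing constants, Gr\"onwall, Czisz\'ar--Kullback--Pinsker) is the same as the paper's, but the central quantitative step fails as you have set it up. You estimate the oscillation of the product by $\Vert AB-\overline{AB}\Vert_{\mathrm L^2(\Omega)}\le \Vert A\Vert_{\mathrm L^\infty(\Omega)}\Vert\delta_B\Vert_{\mathrm L^2(\Omega)}+\overline{B}\,\Vert\delta_A\Vert_{\mathrm L^2(\Omega)}$, so the time-dependent constant $K(t)$ in your entropy--entropy-dissipation inequality is driven by $\Vert A\Vert^2_{\mathrm L^\infty(\Omega)}=\Vert a\Vert_{\mathrm L^\infty(\Omega)}$. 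For your Gr\"onwall step to produce the exponent $\frac{2-\varepsilon}{3}$ you would need $K(t)\le C(1+t)^{(1+\varepsilon)/3}$, i.e.\ essentially $\Vert a(t,\cdot)\Vert_{\mathrm L^\infty(\Omega)}\le C(1+t)^{1/3}$. No such bound is available: the supremum-norm control obtainable from the Green's function/duality arguments you invoke --- and the one the paper proves in Proposition \ref{L^infty d_c} --- grows polynomially with a \emph{superlinear} exponent (of order $(1+t)^{\frac{18-N}{6}}$ there). Once the growth exponent $\rho$ of $K(t)$ satisfies $\rho>1$, the integral $\int_{T_\varepsilon}^{t}(1+s)^{-\rho}\,{\rm d}s$ is bounded uniformly in $t$, so integrating $-\mathcal{E}'(t)=D(t)\ge K(t)^{-1}\mathcal{E}(t)$ yields only $\mathcal{E}(t)\le C\,\mathcal{E}(T_\varepsilon)$: no decay to zero at all, let alone the claimed stretched exponential. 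The sentence ``inserting this growth into the EED estimate'' is exactly where your argument breaks.

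The paper circumvents this by never placing a diffusing species in $\mathrm L^\infty$ inside the dissipation estimate. In Proposition \ref{missing theorem} the dangerous cross term is split by H\"older as $\Vert\delta_A\delta_B\Vert^2_{\mathrm L^2(\Omega)}\le \Vert\delta_A^2\Vert_{\mathrm L^{3/2}(\Omega)}\Vert\delta_B\Vert^2_{\mathrm L^6(\Omega)}$: the factor $\Vert\delta_B\Vert^2_{\mathrm L^6(\Omega)}$ is absorbed into the dissipation through the Poincar\'e--Wirtinger inequality with Sobolev exponent $6$ (this is where $N\le3$ enters), while $\Vert\delta_A^2\Vert_{\mathrm L^{3/2}(\Omega)}$ requires only the bound $\Vert a(t,\cdot)\Vert_{\mathrm L^{3/2}(\Omega)}\le \ell_3(1+t)^{1/3}$ of Lemma \ref{L^p estimate d_c}. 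That lemma is proved not by a representation formula but from the duality estimate of Lemma \ref{Lem:L1}, $\int_0^t\int_\Omega(a^2+ac)\,{\rm d}x\,{\rm d}s\le \ell_1(1+t)$, followed by an energy estimate and interpolation with mass conservation. It is this $(1+t)^{1/3}$ growth of an $\mathrm L^{3/2}$ norm --- not of an $\mathrm L^\infty$ norm --- that produces the factor $(1+t)^{-1/3}$ in Proposition \ref{missing theorem} and hence the exponent $\frac{2-\varepsilon}{3}$. The $\mathrm L^\infty$ bounds of Proposition \ref{L^infty d_c} enter only logarithmically, to control the weight $\Gamma(a,a_\infty)$ (and its analogues) in the upper bound for the relative entropy; this is also the true origin of the parameter $\varepsilon$, via $\ln(1+t)\le(1+t)^{\varepsilon/3}$ for $t\ge T_\varepsilon$, rather than a choice of $p'$ close to $1$. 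To repair your proof, replace the $\mathrm L^\infty$--$\mathrm L^2$ pairing by this $\mathrm L^{3/2}$--$\mathrm L^6$ pairing and first establish Lemmas \ref{Lem:L1} and \ref{L^p estimate d_c}.
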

In the reminder of this introduction, we briefly describe our strategy to arrive at the aforementioned large time behaviour via the entropy method. We start by defining the entropy dissipation functionals associated with the degenerate models \eqref{eq:model 1} and \eqref{eq:model 11}:
\begin{equation}\label{entropy dis} 
\left \{
\begin{aligned}
\text{For} \ d_b & = 0,
\\
D(a,b,c) := & \, 4\, d_{a}\int_{\Omega}\left\vert\nabla \sqrt{a}\right\vert^{2}\, {\rm d}x + \, 4\, d_{c}\int_{\Omega}\left\vert\nabla \sqrt{c}\right\vert^2\, {\rm d}x  + \int_{\Omega}(ab-c)\ln\left(\frac{ab}{c}\right)\, {\rm d}x,
\\
\text{For} \ d_c & = 0,
\\
D(a,b,c) := &\, 4\,  d_{a}\int_{\Omega}\left\vert\nabla \sqrt{a}\right\vert^2\, {\rm d}x + \, 4\, d_{b}\int_{\Omega}\left\vert\nabla \sqrt{b}\right\vert^2\, {\rm d}x + \int_{\Omega}(ab-c)\ln\left(\frac{ab}{c}\right)\, {\rm d}x.
\end{aligned}
\right.
\end{equation}
Note that both the dissipation functionals are positive. We recall here an algebraic inequality:
\[ 
(x-y)(\ln{x}-\ln{y}) \geq 4(\sqrt{x}-\sqrt{y})^2, \ \ \forall x,y \geq 0.
\]
This algebraic inequality gives a lower bound on the last term of the above entropy dissipation functionals whereas the classical Poincar\'e inequality gives a lower bound for the gradient terms. More precisely, we have
\begin{equation}\label{new label d_a,d_c L2}
\left \{
\begin{aligned}
D(a,b,c) \geq &  \frac{ 4\, d_a}{P(\Omega)} \Vert  \delta_{A} \Vert^2_{\mathrm{L}^2(\Omega)}+ \frac{ 4\, d_c}{P(\Omega)} \Vert  \delta_{C} \Vert^2_{\mathrm{L}^2(\Omega)}+4\Vert AB-C \Vert^2_{\mathrm{L}^2(\Omega)}, \qquad \mbox{ for }\, d_b=0,\\
D(a,b,c) \geq &  \frac{ 4\, d_a}{P(\Omega)} \Vert  \delta_{A} \Vert^2_{\mathrm{L}^2(\Omega)}+ \frac{ 4\, d_b}{P(\Omega)} \Vert  \delta_{B} \Vert^2_{\mathrm{L}^2(\Omega)}+4\Vert AB-C \Vert^2_{\mathrm{L}^2(\Omega)},\qquad \mbox{ for }\, d_c=0,\\
\end{aligned}
\right .
\end{equation}
where $P(\Omega)$ is the Poincar\'e constant of the domain $\Omega$ (see Lemma \ref{Poincare-Wirtinger} in the Appendix). Observe that the deviation term $\Vert  \delta_{B} \Vert^2_{\mathrm{L}^2(\Omega)}$ is missing in the above lower bound corresponding to the degenerate case $d_b=0$ and the deviation term $\Vert  \delta_{C} \Vert^2_{\mathrm{L}^2(\Omega)}$ is missing in the lower bound corresponding to the degenerate case $d_c=0$. Note that, for the dissipation functional of the degenerate system \eqref{eq:model 1}, the Poincar\'e inequality actually yields
\begin{equation*}
\left \{
\begin{aligned}
D(a,b,c) \geq & \frac{4\, d_a}{P(\Omega)} \Vert \delta_{A} \Vert^2_{\mathrm{L}^{\frac{2N}{N-2}}(\Omega)}+\frac{4\, d_c}{P(\Omega)}\Vert \delta_{C} \Vert^2_{\mathrm{L}^{\frac{2N}{N-2}}(\Omega)}+4\Vert AB-C \Vert^2_{\mathrm{L}^2(\Omega)} \qquad \mbox{ for }\, N\geq 4,
\\
D(a,b,c)\geq&  \frac{4\, d_a}{P(\Omega)} \Vert \delta_{A} \Vert^2_{\mathrm{L}^6(\Omega)}+\frac{4\, d_c}{P(\Omega)}\Vert \delta_{C} \Vert^2_{\mathrm{L}^6(\Omega)}+4\Vert AB-C \Vert^2_{\mathrm{L}^2(\Omega)}  \qquad \qquad \quad \mbox{ for }N\le3.
\end{aligned} 
\right .
\end{equation*}
Similarly, for the dissipation functional of the degenerate system \eqref{eq:model 11}, we have
\begin{equation*}
\left \{
\begin{aligned}
D(a,b,c)\geq & \frac{4\, d_a}{P(\Omega)} \Vert \delta_{A} \Vert^2_{\mathrm{L}^\frac{2N}{N-2}(\Omega)}+\frac{4\, d_b}{P(\Omega)}\Vert \delta_{B} \Vert^2_{\mathrm{L}^\frac{2N}{N-2}(\Omega)}+4\Vert AB-C \Vert^2_{\mathrm{L}^2(\Omega)} \qquad \mbox{ for } N\geq 4,
\\
D(a,b,c)\geq & \frac{4\, d_a}{P(\Omega)} \Vert \delta_{A} \Vert^2_{\mathrm{L}^6(\Omega)}+\frac{4\, d_b}{P(\Omega)}\Vert \delta_{B} \Vert^2_{\mathrm{L}^6(\Omega)}+4\Vert AB-C \Vert^2_{\mathrm{L}^2(\Omega)} \qquad \qquad \quad \mbox{ for }N\le3.
\end{aligned}
\right .
\end{equation*}
Note that, if $(a,b,c)$ solves either of the system \eqref{eq:model 1} or \eqref{eq:model 11}, we have
\[ 
\frac{d}{dt}\Big(E(a,b,c)-E(a_{\infty},b_{\infty},c_{\infty})\Big)= -D(a,b,c),
\]
i.e. the relative entropy (relative with respect to the homogeneous equilibrium state) is non-increasing in time. The relative entropy has the following expression:
\begin{align*}
E(a,b,c)-E(a_{\infty},b_{\infty},c_{\infty}) = \int_{\Omega}\big(a \ln{a}-a-a_{\infty}& \ln{a_{\infty}}+a_{\infty}\big)\, {\rm d}x+\int_{\Omega}\big(b \ln{b}-b-b_{\infty} \ln{b_{\infty}}+b_{\infty}\big)\, {\rm d}x
\\
& +\int_{\Omega}\big(c \ln{c}-c-c_{\infty} \ln{c_{\infty}}+c_{\infty}\big)\, {\rm d}x.
\end{align*}
The above expression rewrites as
\begin{equation}\label{Entropy Gamma function relation 3}
\begin{aligned}
E(a,b,c)-E(a_{\infty},b_{\infty},c_{\infty}) =\int_{\Omega} \left(a \ln{\left(\frac{a}{a_{\infty}}\right)}-a+a_{\infty}\right)\, {\rm d}x + \int_{\Omega}& \left(b \ln{\left(\frac{b}{b_{\infty}}\right)}-b+b_{\infty}\right)\, {\rm d}x 
\\
&+\int_{\Omega} \left(c \ln{\left(\frac{c}{c_{\infty}}\right)}-c+c_{\infty}\right)\, {\rm d}x.
\end{aligned}
\end{equation}
Define a function $\Gamma:(0,\infty)\times(0,\infty)\to\mathbb{R}$ as follows:
\begin{equation}
\Gamma(x,y) :=
\left\{
\begin{aligned}
&\frac{x \ln\left(\frac{x}{y}\right)-x+y}{\left(\sqrt{x}-\sqrt{y}\right)^2}  \qquad \mbox{ for }x\not=y,
\\
&2 \qquad  \qquad \qquad \qquad \ \ \ \mbox{ for }x=y.
\end{aligned}\right.
\end{equation}
It can be shown (see \cite[Lemma 2.1, p.162]{DF06} for details) that the above defined function satisfies the following bound:
\begin{align*}
\Gamma(x,y)\leq C_{\Gamma}\max\left\{1,\ln\left(\frac{x}{y}\right)\right\}
\end{align*}
for some positive constant $C_{\Gamma}$. Note that using the function $\Gamma$ defined above, the relative entropy can be rewritten as 
\[
E(a,b,c)-E(a_{\infty},b_{\infty},c_{\infty})= \int_{\Omega} \Gamma(a,a_{\infty})(A-A_{\infty})^2+\int_{\Omega} \Gamma(b,b_{\infty})(B-B_{\infty})^2+\int_{\Omega} \Gamma(c,c_{\infty})(C-C_{\infty})^2.
\]
Using the aforementioned bound for $\Gamma$, we obtain
\begin{align*}
	E(a,b,c)-E(a_{\infty},b_{\infty},c_{\infty}) 
	%= \int_{\Omega} \Gamma(a,& a_{\infty})(A-A_{\infty})^2+ \int_{\Omega} \Gamma(b,b_{\infty})(B-B_{\infty})^2+\int_{\Omega} \Gamma(c,c_{\infty})(C-C_{\infty})^2 \\
	\leq  C_{\Gamma}& \max\big\{1,\ln{(\Vert a\Vert_{\mathrm{L}^{\infty}(\Omega)}+1)}+\vert \ln{a_{\infty}}\vert\big\}\Vert A-A_{\infty}\Vert_{\mathrm{L}^2(\Omega)}^2 
	\\+& C_{\Gamma} \max\big\{1,\ln{(\Vert b\Vert_{\mathrm{L}^{\infty}(\Omega)}+1)}+\vert \ln{b_{\infty}}\vert\big\}\Vert B-B_{\infty}\Vert_{\mathrm{L}^2(\Omega)}^2
	\\+& C_{\Gamma} \max\big\{1,\ln{(\Vert c\Vert_{\mathrm{L}^{\infty}(\Omega)}+1)}+\vert \ln{c_{\infty}}\vert\big\}
	\Vert C-C_{\infty}\Vert_{\mathrm{L}^2(\Omega)}^2.
\end{align*}
We derive the growth rate of the solutions corresponding to \eqref{eq:model 1} and \eqref{eq:model 11} in Lemma \ref{L^infty} and in Proposition \ref{L^infty d_c} respectively. Both these results assert that the growth of the solutions can at most be polynomial in time. This leads to
\begin{align*}
E(a,b,c)-E(a_{\infty},b_{\infty},c_{\infty})
\leq   C_{1}(1+t)^{\varepsilon}\Big(\Vert A-A_{\infty}\Vert_{\mathrm{L}^2(\Omega)}^2+\Vert B-B_{\infty}\Vert_{\mathrm{L}^2(\Omega)}^2+\Vert C-C_{\infty}\Vert_{\mathrm{L}^2(\Omega)}^2\Big),
\end{align*}
for all $t\geq T_{\varepsilon}$, where $\varepsilon\ll 1$ is some positive quantity and $T_{\varepsilon}$ depends on $\varepsilon$. Furthermore, the positive constant $C_1$ depends only on the initial data, the domain $\Omega$ and the dimension $N$.

Observe that, other than the logarithm of the growth of the supremum norm of the solution, the growth of relative entropy depends on the $\mathrm{L}^2(\Omega)$ norm of the deviation of the $(A,B,C)$ from $(A_{\infty},B_{\infty},C_{\infty})$. Recall from \eqref{new label d_a,d_c L2} that the dissipation functional is also related to the $\mathrm{L}^2(\Omega)$ norm of the deviations $\delta_{A}$, $\delta_{B}$ and $\delta_{C}$. The following observation holds for all the three species. However, we choose to show it for the term $A$:
\[
\Vert A-A_{\infty}\Vert_{\mathrm{L}^2(\Omega)}^2 \leq 3\left( \Vert A-\overline{A}\Vert_{\mathrm{L}^2(\Omega)}^2+\Vert \overline{A}-\sqrt{\overline{A^2}}\Vert_{\mathrm{L}^2(\Omega)}^2+\Vert \sqrt{\overline{A^2}}-A_{\infty}\Vert_{\mathrm{L}^2(\Omega)}^2\right). 
\]
The following observation says that the first term on the right hand side dominates the second term:
\begin{align*}
	\Vert \overline{A}-\sqrt{\overline{A^2}}\Vert_{\mathrm{L}^2(\Omega)}^2= &\vert\Omega\vert \big\vert
	\overline{A}-\sqrt{\overline{A^2}} \big\vert^2 = \vert\Omega\vert \left( \overline{A^2}+\overline{A}^2-2\overline{A}\sqrt{\overline{A^2}} \right)\\ &
	\leq\vert\Omega\vert \left( \overline{A^2}-\overline{A}^2\right) \leq \Vert A-\overline{A}\Vert_{\mathrm{L}^2(\Omega)}^2.
\end{align*}
Here we used the fact that $\displaystyle{\overline{A}\leq \sqrt{\overline{A^2}}}$, thanks to H\"older inequality. Hence
\[
\Vert A-A_{\infty}\Vert_{\mathrm{L}^2(\Omega)}^2 \leq 6\left( \Vert A-\overline{A}\Vert_{\mathrm{L}^2(\Omega)}^2+\Vert \sqrt{\overline{A^2}}-A_{\infty}\Vert_{\mathrm{L}^2(\Omega)}^2\right). 
\]
This helps us deduce
\begin{align*}
\Vert A-A_{\infty}\Vert_{\mathrm{L}^2(\Omega)}^2+\Vert B-B_{\infty}\Vert_{\mathrm{L}^2(\Omega)}^2&+\Vert C-C_{\infty}\Vert_{\mathrm{L}^2(\Omega)}^2
\leq  6\left(\Vert A-\overline{A}\Vert_{\mathrm{L}^2(\Omega)}^2+\Vert B-\overline{B}\Vert_{\mathrm{L}^2(\Omega)}^2+\Vert C-\overline{C}\Vert_{\mathrm{L}^2(\Omega)}^2 \right)\\
+& 6\left(\Vert \sqrt{\overline{A^2}}-A_{\infty}\Vert_{\mathrm{L}^2(\Omega)}^2+\Vert \sqrt{\overline{B^2}}-B_{\infty}\Vert_{\mathrm{L}^2(\Omega)}^2+\Vert \sqrt{\overline{C^2}}-C_{\infty}\Vert_{\mathrm{L}^2(\Omega)}^2 \right).
\end{align*} 
Next we borrow a result from \cite{DF06}, which says that there exists $C_{EB}>0$, depending only on the domain and the equilibrium state $(a_{\infty},b_{\infty},c_{\infty})$ such that
\begin{equation}\label{ED}
\begin{aligned}
\Vert \sqrt{\overline{A^2}}-A_{\infty} \Vert_{L^2(\Omega)}^2+&\Vert \sqrt{\overline{B^2}}-B_{\infty} \Vert_{L^2(\Omega)}^2+\Vert \sqrt{\overline{C^2}}-C_{\infty} \Vert_{L^2(\Omega)}^2 
\\
\leq & C_{EB} \left (   \Vert  \delta_{A} \Vert_{L^{2}(\Omega)}^2+\Vert  \delta_{B} \Vert_{L^{2}(\Omega)}^2+\Vert  \delta_{C} \Vert_{L^{2}(\Omega)}^2+\Vert C-AB\Vert_{L^2(\Omega)}^2\right).
\end{aligned}
\end{equation}
This indicates that to relate entropy with  entropy dissipation, we need to relate dissipation with the missing term $\Vert \delta_{B} \Vert^2_{\mathrm{L}^2(\Omega)}$ for the degenerate case \eqref{eq:model 1}  and similarly with the missing term $\Vert \delta_{C} \Vert^2_{\mathrm{L}^2(\Omega)}$ for the degenerate case \eqref{eq:model 11}. 

For the degenerate system \eqref{eq:model 1}, the relation between the missing term and the entropy dissipation is shown in Proposition \ref{main relatio} for dimensions $N\geq 4$. For dimensions $N\leq 3$, we obtain the relation in Proposition \ref{2.0.2}. More precisely, we have obtained
\begin{equation*}
\left \{
\begin{aligned}
D(a,b,c) \geq & \, \hat C(1+t)^{-\frac{N-2}{N-1}}\Vert \delta_{B}\Vert_{\mathrm{L}^2(\Omega)}^2 \qquad  \mbox{ for } N\geq 4,
\\
D(a,b,c) \geq & \, \hat C(1+t)^{-\frac{5}{6}}\Vert \delta_{B}\Vert_{\mathrm{L}^2(\Omega)}^2 \qquad \quad\mbox{ for }  N\le 3.
\end{aligned}
\right .
\end{equation*}
where $\hat{C}$ is some positive constant, independent of time.

In order to obtain the above lower bound, we require an estimate on the growth (in time) of the $\mathrm{L}^{\frac{N}{2}}(\Omega)$ norm of the degenerate species $b$ for dimension $N\geq 4$ and for dimension $N\leq 3$, we require a similar growth estimate of the $\mathrm{L}^{\frac{3}{2}}(\Omega)$ norm of the degenerate species $b$. These estimates are obtained in Lemma \ref{N/2 estimate}, Proposition \ref{N=3 main} and Lemma \ref{2.0.3}. More precisely, we have obtained
\begin{equation*}
	\left \{
	\begin{aligned}
		\Vert b\Vert_{\mathrm{L}^{\frac{N}{2}}(\Omega)} & \leq \hat{K}(1+t)^{\frac{N-2}{N-1}} \qquad N \geq 4,\\
		\Vert b \Vert_{\mathrm{L}^{\frac{3}{2}}(\Omega)} & \leq \hat{K} (1+t)^{\frac{5}{6}} \qquad N=1,2,3,
	\end{aligned}
	\right .
\end{equation*}
where $\hat{K}$ is a positive constant independent of time. For dimension $N\leq 3$, the above integral estimation of the species $b$ follows from Gagliardo-Nirenberg inequality whereas  for dimension  $N\geq 4$,  we need closeness assumption \eqref{closeness condition imp} on the other two non-zero diffusion coefficients  $d_a$ and $d_c$. Closeness assumption further helps us to estimate $\mathrm{L}^p$ integral growth of  $a$ and $c$ on a parabolic cylinder with unit height, for some large exponent $p$. It turns out if  $d_a$ and  $d_c$  satisfy the closeness condition \eqref{closeness condition imp} for dimension  $N\geq 4$, then there exists a positive constant $C_0>0$, depending on $p$ and independent of time, such that
\[ 
\Vert a \Vert_{\mathrm{L}^{p}((\tau,\tau+1)\times\Omega)}+\Vert c \Vert_{\mathrm{L}^{p}((\tau,\tau+1)\times\Omega)}\leq C_0 \qquad \forall \tau\geq 0, \ p>N.
\]
The above result is proved in Lemma \ref{L^p with time}. For dimension $N\geq 4$, this will lead  us to prove our key $\mathrm{L}^{\frac{N}{2}}(\Omega)$ estimate. These strategies are inspired from the article \cite{EMT20}, where the authors study existence and large time behaviour of a particular $4\times4$ quadratic degenerate reaction-diffusion system.

For the degenerate system \eqref{eq:model 11},  we will relate the entropy dissipation functional with the missing term $\Vert \delta_{C} \Vert^2_{\mathrm{L}^2(\Omega)}$. In this article, for the degenerate case \eqref{eq:model 11}, we analyze the decay of entropy in dimension  $N\leq 3$. We will establish our result only for dimension $N=3$. For dimension $N=1,2$, all the calculations are similar. In Proposition \ref{missing theorem} We establish the following relation between the entropy dissipation and the missing term $\Vert \delta_{C} \Vert^2_{\mathrm{L}^2(\Omega)}$
\[
D(a,b,c) \geq \hat{C}(1+t)^{-\frac{1}{3}}\left(\Vert A-\overline{A}\Vert_{\mathrm{L}^2(\Omega)}^2+\Vert B-\overline{B}\Vert_{\mathrm{L}^2(\Omega)}^2+\Vert C-\overline{C}\Vert_{\mathrm{L}^2(\Omega)}^2\right).
\]
In order to arrive at the above estimate we need a particular integral estimate of species $a$, which we will establish in Lemma \ref{L^p estimate d_c}. The particular relation is the following
\[
\Vert a\Vert_{\mathrm{L}^\frac{3}{2}(\Omega)} \leq  \tilde{C}(1+t)^{\frac{1}{3}} \qquad \forall t \geq 0
\]
where $\tilde{C}>0$ a constant, independent of time. This is the key estimate for the degenerate case \eqref{eq:model 11}. To establish this we will use a particular integral estimate in Lemma \ref{Lem:L1}. 

These results, along with the Gr\"onwall inequality, will lead us to the conclusion that the entropy functional decays sub-exponentially fast in time for both the degenerate systems \eqref{eq:model 1} and \eqref{eq:model 11}. On the other hand, an application of the Czisz\'ar-Kullback-Pinsker inequality yields the following lower bound on the relative entropy (see \cite{DF06} for details):
\begin{flalign*}
	E(a,b,c) -E(a_{\infty},b_{\infty},c_{\infty})&\geq  \frac{(3+2\sqrt{2})\vert \Omega \vert}{2M_{1}(9+2\sqrt{2})}\Vert a-a_{\infty}\Vert_{\mathrm{L}^1(\Omega)}^{2} \\ +&\frac{(3+2\sqrt{2})\vert \Omega \vert}{2M_{2}(9+2\sqrt{2})}\Vert b-b_{\infty}\Vert_{\mathrm{L}^1(\Omega)}^{2}+ \frac{(3+2\sqrt{2})\vert \Omega \vert}{(M_{1}+M_{2})(9+2\sqrt{2})}\Vert c-c_{\infty}\Vert_{\mathrm{L}^1(\Omega)}^{2}.
\end{flalign*}
Combining all these estimates we arrive at our main results: Theorem \ref{convergence theorem 1} and Theorem \ref{convergence theorem 2}.

\section{The case of $d_b=0$}
The idea is to connect entropy dissipation with the missing $\Vert \delta_{B} \Vert^2_{\mathrm{L}^2(\Omega)}$ term in \eqref{new label d_a,d_c L2} so that we can apply Gr\"onwall inequality to have a sub-exponential decay. We begin by proving an uniform integrability estimate for $a$ and $c$ in a parabolic cylinder of unit height. A similar estimate was obtained for a degenerate four species model in \cite[Lemma 3.12, p.4343]{EMT20}.
\begin{Lem} \label{L^p with time}
Let  $p>N\geq 4$ and let $p'$ be its H\"older conjugate. Let $(a,b,c)$ be the solution to the degenerate system \eqref{eq:model 1} and let the nonzero diffusion coefficients $d_a,d_c$ satisfy the closeness condition \eqref{closeness condition imp}. Then there exists a $C_0>0$, depending only on the initial condition and the dimension $N$, such that
\[
\Vert a \Vert_{\mathrm L^{p}(\Omega_{\tau,\tau+1})}+\Vert c \Vert_{\mathrm L^{p}(\Omega_{\tau,\tau+1})} \leq C_0 \qquad \forall \tau>0.
\]
\end{Lem}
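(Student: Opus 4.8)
The plan is to bound the single quantity $u:=a+c$ in $\mathrm{L}^p(\Omega_{\tau,\tau+1})$, since $a,c\ge 0$ gives $\Vert a\Vert_{\mathrm{L}^p}+\Vert c\Vert_{\mathrm{L}^p}\le 2\Vert u\Vert_{\mathrm{L}^p}$, so the claim reduces to a single scalar estimate. The structural gift that makes this tractable is that adding the $a$- and $c$-equations in \eqref{eq:model 1} cancels the reaction term exactly, so that $u$ solves a reaction-free parabolic equation with homogeneous Neumann data,
\[
\partial_t u=\Delta(\mu\, u),\qquad \mu:=\frac{d_a\,a+d_c\,c}{a+c}\in\big[\min(d_a,d_c),\ \max(d_a,d_c)\big].
\]
The only global information at hand is the uniform-in-time mass control $\Vert u(t)\Vert_{\mathrm{L}^1(\Omega)}=M_1\vert\Omega\vert$ from \eqref{mass conserve 1}, and the entire difficulty is to upgrade this single-time $\mathrm{L}^1$ bound to a space--time $\mathrm{L}^p$ bound with a constant that does not deteriorate as $\tau\to\infty$.

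I would run a duality argument on a cylinder enlarged in time. Fix $\tau\ge 1$, a nonnegative $\theta$ in the unit ball of $\mathrm{L}^{p'}(\Omega_{\tau-1,\tau+1})$, and a smooth temporal cutoff $\chi$ with $\chi\equiv1$ on $(\tau,\tau+1)$ and $\chi(\tau-1)=0$ (for $0<\tau<1$ one works on $\Omega_{0,\tau+1}$ and keeps the genuine initial trace, which is harmless). Solving the backward dual problem $-\partial_t\phi-\mu\,\Delta\phi=\theta$ with $\phi(\tau+1,\cdot)=0$ and Neumann conditions, testing the equation for $\chi u$ against $\phi$, and integrating by parts twice (the Neumann conditions kill every boundary term and the coefficient $\mu$ cancels against itself) yields the clean identity
\[
\int_{\Omega_{\tau-1,\tau+1}}\chi\,u\,\theta\,\mathrm{d}x\,\mathrm{d}t=\int_{\Omega_{\tau-1,\tau+1}}\chi'\,u\,\phi\,\mathrm{d}x\,\mathrm{d}t,
\]
in which the uncontrollable initial slice has been replaced by the cutoff term on the right. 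All of the analytic work is now in estimating $\phi$ in $\mathrm{W}^{2,1}_{p'}$: I solve the variable-coefficient dual problem by perturbing off the constant-coefficient operator with diffusion $d:=\frac{d_a+d_c}{2}$, and the two smallness requirements in the closeness condition \eqref{closeness condition imp} are exactly what make this Neumann-series argument converge. The first inequality, $|d_a-d_c|<2/C^{PRC}_{d,p'}$, makes the parabolic part a contraction through the regularity estimate of Theorem~\ref{PE}, while the second, $\frac{|d_a-d_c|}{d_a+d_c}<1/C_{SOR}$, controls the elliptic upgrade from $\Delta\phi$ to the full Hessian via Theorem~\ref{estimation 1}, the relevant relative mismatch being $|\mu-d|/d\le\frac{|d_a-d_c|}{d_a+d_c}$. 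This produces a bound $\Vert\phi\Vert_{\mathrm{W}^{2,1}_{p'}}\le C\Vert\theta\Vert_{\mathrm{L}^{p'}}$ with $C$ independent of $\tau$.

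The payoff comes from parabolic smoothing. Because $\phi$ is two parabolic derivatives smoother than $\theta$ and $p>N$ forces $p'<\frac{N}{N-1}<\frac{N+2}{2}$, the parabolic Sobolev embedding gives $\phi\in\mathrm{L}^{s'}$ with $\frac{1}{s'}=\frac{1}{p'}-\frac{2}{N+2}$, so $s'>p'$ and hence $s:=(s')'<p$. Pairing the cutoff term as $\int\chi'u\,\phi\le\Vert u\Vert_{\mathrm{L}^s}\,\Vert\phi\Vert_{\mathrm{L}^{s'}}$ (with $\chi'$ supported in $(\tau-1,\tau)$) and taking the supremum over $\theta$ therefore delivers a genuine gain,
\[
\Vert u\Vert_{\mathrm{L}^{p}(\Omega_{\tau,\tau+1})}\le C\,\Vert u\Vert_{\mathrm{L}^{s}(\Omega_{\tau-1,\tau+1})},\qquad s<p .
\]
Iterating this estimate finitely many times, starting from the uniform mass bound \eqref{mass conserve 1}, bootstraps integrability from $\mathrm{L}^1$ up to $\mathrm{L}^p$; since every step uses only unit-height cylinders together with the time-translation-invariant mass bound and $\tau$-independent regularity constants, the final $C_0$ is uniform in $\tau$. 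I expect the main obstacle to be precisely the tension built into these last two paragraphs: one must absorb the perturbation arising from the diffusion mismatch $d_a\neq d_c$ — which is impossible without the closeness condition and is why that hypothesis appears for $N\ge 4$ — while simultaneously keeping every constant independent of $\tau$, which forces the cutoff-and-bootstrap mechanism rather than a one-shot energy estimate.
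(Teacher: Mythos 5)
Your duality setup is sound as far as the one-step estimate goes: the identity $\int\chi u\,\theta=\int\chi' u\,\phi$ is correct, the absorption of the perturbation $(\mu-d)\Delta\phi$ off the constant coefficient $d=\frac{d_a+d_c}{2}$ is legitimate, and the parabolic Sobolev gain $s<p$ is real; in fact your direct treatment of $u=a+c$ via a variable-coefficient dual problem is a tidier version of the paper's computation (the paper keeps $a$ and $c$ separate, represents $\phi_\tau a$ and $\phi_\tau c$ through the Neumann Green's function of $\partial_t-d\Delta$, and gets its gain from Young's convolution inequality with the Gaussian kernel bound rather than from an embedding). The genuine gap is the final step, ``iterating this estimate finitely many times \dots bootstraps integrability from $\mathrm L^1$ up to $\mathrm L^p$.'' To apply your estimate again at the lower exponent $s_1=s$, then at $s_2$ with $\frac{1}{s_{j+1}}=\frac{1}{s_j}+\frac{2}{N+2}$, and so on, you must re-run the duality with data $\theta\in\mathrm L^{s_j'}$, and the absorption then requires $\frac{\vert d_a-d_c\vert}{2}\,C^{PRC}_{d,s_j'}<1$ (or the $C_{SOR}(\Omega,N,s_j')$ analogue) at \emph{every} such dual exponent. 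The closeness condition \eqref{closeness condition imp} is assumed at the single exponent $p'$ only. Since $s_j\downarrow 1$, the dual exponents $s_j'$ climb past $2$ (for $N=4$, $p=5$ one needs $s_1'=\frac{15}{7}\approx 2.14$ and $s_2'=\frac{15}{2}$), and maximal-regularity constants depend on, and in general grow with, the exponent; interpolating with the known bound $C^{PRC}_{d,2}\le \frac1d$ only covers dual exponents in $[p',2]$. So under the stated hypotheses the contraction fails at the later steps, and the bootstrap as written does not close.

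The repair — and this is precisely the paper's device — is to never leave the exponent $p$: spend the gain on interpolation against mass instead of on lowering the exponent. Since $\chi'$ is supported in $(\tau-1,\tau)$, you have
\[
\left\Vert \chi' u\right\Vert_{\mathrm L^{s}(\Omega_{\tau-1,\tau+1})}
\le M\,\left\Vert u\right\Vert_{\mathrm L^{p}(\Omega_{\tau-1,\tau})}^{1-\alpha}\left\Vert u\right\Vert_{\mathrm L^{1}(\Omega_{\tau-1,\tau})}^{\alpha},
\qquad \tfrac1s=\tfrac{1-\alpha}{p}+\alpha,
\]
and \eqref{mass conserve 1} bounds the $\mathrm L^1$ factor uniformly in time. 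With $\beta_n:=\Vert u\Vert_{\mathrm L^p(\Omega_{n,n+1})}$ this turns your one-step estimate into the sublinear recursion $\beta_n\le C\,\beta_{n-1}^{1-\alpha}$, which self-improves to $\beta_n\le\max\{\beta_0,C^{1/\alpha}\}$ for all $n$, using the dual estimate only at $p'$. What remains is the initial window $\beta_0$, where the cutoff gives no help; the paper bounds it by applying the $p^{\textrm{th}}$ order integrability estimate of Theorem \ref{PE} to $\partial_t u=\Delta(\mu u)$, and this is exactly where the first inequality of \eqref{closeness condition imp} enters, the second being reserved for the absorption in the duality step. (Relatedly, your reading of the second inequality as controlling an ``elliptic upgrade from $\Delta\phi$ to the full Hessian'' is a misattribution: $C_{SOR}$ is the maximal-regularity constant in $\Vert\Delta\psi\Vert_{\mathrm L^{p'}}\le\frac{C_{SOR}}{d}\Vert\theta\Vert_{\mathrm L^{p'}}$ of Theorem \ref{estimation 1}; no Hessian bound is needed anywhere in the argument.)
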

\begin{proof}
Define $d:=\frac{d_a+d_c}{2}$. We can rewrite the equations corresponding to concentrations $a$ and $c$ in \eqref{eq:model 1} as
\begin{equation} \label{rewrite a and c}
\left \{
\begin{aligned}
\partial_t a - d \Delta a & = c-ab+ (d_a -d) \Delta a,
\\
\partial_t c - d\Delta c & = ab-c +(d_c -d) \Delta c.
\end{aligned}
\right.   
\end{equation}
Let $\phi:[0,\infty)\rightarrow [0,1]$ be a smooth function such that $\phi(0)=0$ and 
\begin{align*}
\phi(x) & = 1 \qquad \mbox{ for }x\in[1,\infty)
\\
0 \le \phi'(x) & \le M \qquad \mbox{ for }x\in[0,\infty)
\end{align*}
for some constant $M>0$. For an arbitrary $\tau>0$, consider $\phi_\tau:[\tau,\infty)\to[0,1]$ defined as $\phi_{\tau}(s) := \phi(s-\tau)$ for $s\in[\tau,\infty)$. Then, the product $\phi_\tau(t)a(t,x)$ satisfies
\begin{equation} \nonumber
\left\{
\begin{aligned}
\partial_t \left( \phi_{\tau}a\right) - d \Delta \left( \phi_{\tau}a\right) & = a \partial_t \phi_{\tau} + \phi_{\tau}( c-ab+(d_a-d)\Delta a) \qquad \mbox{ in } \Omega_{\tau,\tau+2}\\
\phi_{\tau}\nabla a \cdot n & = 0 \qquad \qquad \qquad \qquad \  \qquad \qquad \qquad\ \ \ \ \mbox { on } \partial\Omega_{\tau,\tau+2}\\
\phi_{\tau}(\tau)a(\tau,x) & = 0 \qquad \qquad \qquad \qquad \qquad \qquad \qquad \ \ \ \ \  \mbox{ in }\Omega.
\end{aligned}
\right.
\end{equation}
Making the change of variable $t_1 = t - \tau$ in the above equation yields
\begin{equation}\label{eq:phi-a}
\left \{
\begin{aligned}
\partial_{t_1}(\phi_\tau(t_1+\tau)a(t_1+\tau,x)) & -d\Delta (\phi_\tau(t_1+\tau)a(t_1+\tau,x))
\\ 
= a(t_1+\tau,x)\partial_{t_1}\phi_\tau(t_1& +\tau)+\phi_\tau(t_1+\tau)(c-ab+(d_a-d)\Delta a) \qquad  \mbox{ in }\Omega_{0,2}
\\
\phi_\tau(t_1+\tau)\nabla a(t_1+\tau,x)\cdot n & =  0 \qquad   \qquad \quad \qquad \qquad  \ \qquad \qquad \qquad \qquad \mbox{ on } \partial\Omega_{0,2} 
\\
\phi_{\tau}(0+\tau)a(0+\tau,x) & = 0 \qquad \qquad \qquad \qquad \quad  \qquad \qquad  \qquad \qquad \ \ \mbox{ in } \Omega.
\end{aligned}
\right.
\end{equation}
Similarly, the concentration $c$ satisfies the following boundary value problem:
\begin{equation}\label{eq:phi-c}
\left \{
\begin{aligned}
\partial_{t_1}(\phi_\tau(t_1+\tau)c(t_1+\tau,x)) & -d\Delta (\phi_\tau(t_1+\tau)c(t_1+\tau,x))
\\ 
= c(t_1+\tau,x)\partial_{t_1}\phi_\tau(t_1& +\tau)+\phi_\tau(t_1+\tau)(ab-c+(d_c-d)\Delta c) \qquad  \mbox{ in }\Omega_{0,2} 
\\
\phi_\tau(t_1+\tau)\nabla c(t_1+\tau,x)\cdot n & =  0 \qquad  \quad \qquad \qquad \qquad \qquad  \qquad \qquad  \qquad \  \mbox{ on } \partial\Omega_{0,2} 
\\
\phi_{\tau}(0+\tau)c(0+\tau,x) & = 0 \qquad \qquad \qquad \qquad \quad \qquad  \qquad  \qquad \qquad \ \   \mbox{ in } \Omega. 
\end{aligned}
\right.
\end{equation}
Let $G_d$ denotes the Green's function associated with the operator $\partial_t-d\Delta$ with Neumann boundary condition. Then,  we can express the solutions to \eqref{eq:phi-a} and \eqref{eq:phi-c} as follows:
\begin{equation}\label{eq:represent-a}
\begin{aligned}
\phi_\tau(t_1+\tau)a(t_1+\tau,x) & = \int_{0}^{t_1}\int_{\Omega}G_d(t_1,s,x,y)a(s+\tau,y)\partial_{s}\phi_\tau(s+\tau)\, {\rm d}y\, {\rm d}s
\\
& + \int_{0}^{t_1}\int_{\Omega}G_d(t_1,s,x,y)\phi_\tau(s+\tau)(c-ab+(d_a-d)\Delta a)(s,y)\, {\rm d}y\, {\rm d}s
\end{aligned}
\end{equation}
and
\begin{equation}\label{eq:represent-c}
\begin{aligned}
\phi_\tau(t_1+\tau)c(t_1+\tau,x) & = \int_{0}^{t_1}\int_{\Omega}G_d(t_1,s,x,y)c(s+\tau,y)\partial_{s}\phi_\tau(s+\tau)\, {\rm d}y\, {\rm d}s
\\
& + \int_{0}^{t_1}\int_{\Omega}G_d(t_1,s,x,y)\phi_\tau(s+\tau)(ab-c+(d_c-d)\Delta c)(s,y)\, {\rm d}y\, {\rm d}s.
\end{aligned}
\end{equation}
Let us fix a non-negative $\theta \in \mathrm L^{p'}(\Omega_{0,2})$. Let $\psi$ be the solution to
\begin{equation}\label{eq:psi-evolve}
\left\{
\begin{aligned}
\partial_{t_1} \psi(t_1,x) +d\Delta \psi(t_1,x)& = -\theta(t_1,x) \qquad \mbox{in}\ \Omega_{0,2}
\\
\nabla \psi(t_1,x)\cdot n & =0 \qquad \ \ \qquad \ \  \mbox{on}\  (0,2) \times \partial \Omega
\\
\psi(2,x) & =0 \qquad \qquad \quad \  \mbox{in} \ \Omega.
\end{aligned}
\right.
\end{equation}
Applying the second order regularity estimate (see Theorem \ref{estimation 1} in Appendix \ref{sec:app} for further details) to the above equation yields
\begin{align}\label{second order psi}
\Vert \Delta \psi \Vert_{\mathrm L^{p'}(\Omega _{0,2})} \leq \frac{2 C_{SOR}( \Omega,N,p')}{d_a+d_c} \Vert \theta \Vert_{\mathrm L^{p'}(\Omega_{0,2})}.
\end{align}
Multiplying the expression \eqref{eq:represent-a} by $\theta$ and integrating over time and space yields
\begin{equation*}
\begin{aligned}
\int_{0}^{2}\int_{\Omega} & \phi_\tau(t_1+\tau) a(t_1+\tau,x) \theta(t_1,x)\, {\rm d}x\, {\rm d}t_{1}
\\
& = \int_{0}^{2}\int_{\Omega} \left(\int_{0}^{t_1}\int_{\Omega}G_d(t_1,s,x,y)a(s+\tau,y)\partial_{s}\phi_\tau(s+\tau)\, {\rm d}y\, {\rm d}s\right)\theta(t_1,x)\, {\rm d}x\, {\rm d}t_{1}
\\
& + \int_{0}^{2}\int_{\Omega} \left( \int_{0}^{t_1}\int_{\Omega}G_d(t_1,s,x,y)\phi_\tau(s+\tau)(c-ab+(d_a-d)\Delta a)(s,y)\, {\rm d}y\, {\rm d}s \right)\theta(t_1,x)\, {\rm d}x\, {\rm d}t_{1}.
\end{aligned}
\end{equation*}
Substituting for $\theta$ in terms of $\psi$, using \eqref{eq:psi-evolve}, in the second term on the right hand side of the above expression followed by integration by parts yields
\begin{align*}
\int_{0}^{2}\int_{\Omega} & \phi_\tau(t_1+\tau) a(t_1+\tau,x) \theta(t_1,x)\, {\rm d}x\, {\rm d}t_{1}
\\
& = \int_{0}^{2}\int_{\Omega} \left(\int_{0}^{t_1}\int_{\Omega}G_d(t_1,s,x,y)a(s+\tau,y)\partial_{s}\phi_\tau(s+\tau)\, {\rm d}y\, {\rm d}s\right)\theta(t_1,x)\, {\rm d}x\, {\rm d}t_{1}
\\
& + \int_{0}^{2}\int_{\Omega} \left( \int_{0}^{t_1}\int_{\Omega}\left(\partial_{t_1}-d\Delta\right)G_d(t_1,s,x,y)\left(\phi_\tau(s+\tau)(c-ab+(d_a-d)\Delta a)(s,y)\right)\, {\rm d}y\, {\rm d}s \right)\psi(t_1,x)\, {\rm d}x\, {\rm d}t_{1}
\\
& + \int_{0}^{2}\int_{\Omega} \left(\int_\Omega G_d(t_1,t_1,x,y)\phi_\tau(t_1+\tau)(c-ab+(d_a-d)\Delta a)(t_1,y)\, {\rm d}y\right)\psi(t_1,x)\, {\rm d}x\, {\rm d}t_{1}.
\end{align*}
Using the property of the Green's function, we get		
\begin{align*}
\int_{0}^{2}\int_{\Omega} & \phi_\tau(t_1+\tau) a(t_1+\tau,x) \theta(t_1,x)\, {\rm d}x\, {\rm d}t_{1}
\\
& = \int_{0}^{2}\int_{\Omega} \left(\int_{0}^{t_1}\int_{\Omega}G_d(t_1,s,x,y)a(s+\tau,y)\partial_{s}\phi_\tau(s+\tau)\, {\rm d}y\, {\rm d}s\right)\theta(t_1,x)\, {\rm d}x\, {\rm d}t_{1}
\\
& + \int_{0}^{2}\int_{\Omega} \phi_\tau(t_1+\tau)(c-ab+(d_a-d)\Delta a)(t_1,x)\psi(t_1,x)\, {\rm d}x\, {\rm d}t_{1}.
\end{align*}
A further integration by parts in the second term on the right hand side of the above expression yields
\begin{equation}\label{first intermediate}
\begin{aligned}
\int_{0}^{2}\int_{\Omega} & \phi_\tau(t_1+\tau) a(t_1+\tau,x) \theta(t_1,x)\, {\rm d}x\, {\rm d}t_{1}
\\
& \le \left\Vert \int_{0}^{t_1}\int_{\Omega}G_d(t_1,s,x,y)a(s+\tau,y)\partial_{s}\phi_\tau(s+\tau)\, {\rm d}y\, {\rm d}s \right\Vert_{\mathrm L^p(\Omega_{0,2})} \left\Vert \theta \right\Vert_{\mathrm L^{p'}(\Omega_{0,2})}
\\
& + \int_{0}^{2}\int_{\Omega} \phi_\tau(t_1+\tau)(\left(c-ab\right)\psi+(d_a-d)a\Delta \psi)(t_1,x)\, {\rm d}x\, {\rm d}t_{1},
\end{aligned}
\end{equation}
where we have also used H\"older inequality to bound the first term on the right hand side.

The following Green's function estimate is available from \cite{Morra83}\cite{ML15}: there exists a constant $K_1>0$, depending only on the domain, such that
\[
0\leq G_d(t_1,s,x,y) \leq \frac{K_1}{(4\pi( t_1-s))^\frac{N}{2}}e^{-\kappa \frac{\Vert x-y \Vert^2}{(t_1-s)}} =: g_{d}(t_1-s,x-y),
\]
for some constant $\kappa>0$ depending only on $\Omega$ and the diffusion coefficient $d$. Note that
\[
\left\Vert g_d\right\Vert_{\mathrm{L}^{z}(\Omega_{0,2})} \leq K_2 \qquad \forall z\in\left[1,1+\frac{N}{2}\right)
\]
for some constant $K_2>0$ depending on $z$. We, in particular choose $\displaystyle{z=1+\frac{1}{N}}$. Observe that there exists a $q<p$ such that
\[
1+\frac{1}{p} = \frac{1}{1+\frac{1}{N}} + \frac{1}{q}.
\]
Hence, applying the Young's convolution inequality we obtain
\begin{align*}
\left\Vert \int_{0}^{t_1}\int_{\Omega}G_d(t_1,s,x,y)a(s+\tau,y)\partial_{s}\phi_\tau(s+\tau)\, {\rm d}y\, {\rm d}s \right\Vert_{\mathrm L^p(\Omega_{0,2})}
& \le  \left\Vert g_d\right\Vert_{\mathrm L^{1+\frac{1}{n}}(\Omega_{0,2})} \left\Vert a(\cdot+\tau,\cdot)\partial_{s}\phi_\tau(\cdot+\tau) \right\Vert_{\mathrm L^q(\Omega_{0,2})}
\\
& \le K_2 \left\Vert a(\cdot+\tau,\cdot)\partial_{s}\phi_\tau(\cdot+\tau) \right\Vert_{\mathrm L^q(\Omega_{0,2})}.
\end{align*}
Using this estimate and the estimate of $\Delta \psi$ from \eqref{second order psi} in \eqref{first intermediate}, we arrive at
\begin{equation}\label{eq:phi-a-estimate}
\begin{aligned}
\int_{0}^{2}\int_{\Omega} & \phi_\tau(t_1+\tau) a(t_1+\tau,x) \theta(t_1,x)\, {\rm d}x\, {\rm d}t_{1}
\\
& \le K_2 \left\Vert a(\cdot+\tau,\cdot)\partial_{s}\phi_\tau(\cdot+\tau) \right\Vert_{\mathrm L^q(\Omega_{0,2})} \left\Vert \theta \right\Vert_{\mathrm L^{p'}(\Omega_{0,2})}
+ \int_{0}^{2}\int_{\Omega} \phi_\tau(t_1+\tau)\left(\left(c-ab\right)\psi\right)(t_1,x)\, {\rm d}x\, {\rm d}t_{1}
\\
& + C_{SOR}( \Omega,N,p') \frac{\vert d_a-d_c\vert}{d_a+d_c} \left\Vert \phi_{\tau}(\cdot+\tau)a(\cdot+\tau,\cdot) \right\Vert_{\mathrm L^{p}(\Omega_{0,2})} \Vert \theta \Vert_{\mathrm L^{p'}(\Omega_{0,2})}.
\end{aligned}
\end{equation}	
Performing a similar set of computations on the equation \eqref{eq:represent-c} for $\phi_\tau c$ yields
\begin{equation}\label{eq:phi-c-estimate}
\begin{aligned}
\int_{0}^{2}\int_{\Omega} & \phi_\tau(t_1+\tau) c(t_1+\tau,x) \theta(t_1,x)\, {\rm d}x\, {\rm d}t_{1}
\\
& \le K_2 \left\Vert c(\cdot+\tau,\cdot)\partial_{s}\phi_\tau(\cdot+\tau) \right\Vert_{\mathrm L^q(\Omega_{0,2})} \left\Vert \theta \right\Vert_{\mathrm L^{p'}(\Omega_{0,2})}
+ \int_{0}^{2}\int_{\Omega} \phi_\tau(t_1+\tau)\left(\left(ab-c\right)\psi\right)(t_1,x)\, {\rm d}x\, {\rm d}t_{1}
\\
& + C_{SOR}( \Omega,N,p') \frac{\vert d_a-d_c\vert}{d_a+d_c} \left\Vert \phi_{\tau}(\cdot+\tau)c(\cdot+\tau,\cdot) \right\Vert_{\mathrm L^{p}(\Omega_{0,2})} \Vert \theta \Vert_{\mathrm L^{p'}(\Omega_{0,2})}.
\end{aligned}
\end{equation}	
Adding the inequalities \eqref{eq:phi-a-estimate} and \eqref{eq:phi-c-estimate} and using positivity of $a$ and $c$ yields
\begin{equation*}
\begin{aligned}
\int_{0}^{2}\int_{\Omega} & \phi_\tau(t_1+\tau) \left(a+c\right)(t_1+\tau,x) \theta(t_1,x)\, {\rm d}x\, {\rm d}t_{1}
\\
& \le K_2 \left\Vert \left( a+ c\right)(\cdot+\tau,\cdot)\partial_{s}\phi_\tau(\cdot+\tau) \right\Vert_{\mathrm L^q(\Omega_{0,2})} \left\Vert \theta \right\Vert_{\mathrm L^{p'}(\Omega_{0,2})}
\\
& + C_{SOR}( \Omega,N,p') \frac{\vert d_a-d_c\vert}{d_a+d_c} \left\Vert \phi_{\tau}(\cdot+\tau)\left(a+c\right)(\cdot+\tau,\cdot) \right\Vert_{\mathrm L^{p}(\Omega_{0,2})} \Vert \theta \Vert_{\mathrm L^{p'}(\Omega_{0,2})}.
\end{aligned}
\end{equation*}		
As $1\leq q< p$, there exists $\alpha \in(0,1]$ such that
\[
\frac{1}{q}=\frac{1-\alpha}{p}+\frac{\alpha}{1}.
\]
Hence by interpolation, we have
\[
\Vert f \Vert_{\mathrm L^q} \leq \Vert f\Vert_{\mathrm L^p}^{1-\alpha}\Vert f \Vert_{\mathrm L^1}^{\alpha} \qquad \forall f\in \mathrm L^{q}\cap \mathrm L^{1}.
\]
Using the above interpolation inequality and a duality argument, we obtain
\begin{align*}\label{second intermediate}
\left\Vert \phi_{\tau}(\cdot+\tau)(a+c)(\cdot + \tau,\cdot) \right\Vert_{\mathrm L^p(\Omega_{0,2})}
\leq \frac{K_2M^\alpha M_1^{\alpha}\vert \Omega\vert^{\alpha}}{\left( 1 -  C_{SOR}(\Omega,N,p')\frac{\vert d_a-d_c\vert}{d_a+d_c}\right)} \left\Vert \left( a+ c\right)(\cdot+\tau,\cdot)\partial_{s}\phi_\tau(\cdot+\tau) \right\Vert_{\mathrm L^p(\Omega_{0,2})}^{1-\alpha}.
\end{align*}
To obtain bounds which are independent of $\tau$, let us take
\[
\texttt{C} := \frac{K_2M M_1^{\alpha}\vert \Omega\vert^{\alpha}}{\left( 1 -  C_{SOR}(\Omega,N,p')\frac{\vert d_a-d_c\vert}{d_a+d_c}\right)}
\quad
\mbox{ and }
\quad
\beta_n := \left\Vert a+c \right\Vert_{\mathrm L^p(\Omega_{n,n+1})} \, \mbox{ for }n\in\mathbb{N}\cup\{0\}.
\]		
As $\tau\geq0$ is arbitrary, we deduce from \eqref{second intermediate} that
\[
\beta_{n+1} \le \texttt{C}\, \beta_n^{1-\alpha} \qquad \mbox{ for }n\in\mathbb{N}.
\]
Consider the set
\[
\Lambda := \big\{n\in \mathbb{N} \quad \text{ such that }\quad \beta_n \leq \beta_{n+1}\big\}.
\]
Observe that  $\beta_n \leq \ {\texttt{C}}^{\frac{1}{\alpha}}$ for all $n\in \Lambda$. Hence we deduce that
\[
\beta_{n} \leq \max \big\{\beta_0,{\texttt{C}}^{\frac{1}{\alpha}} \big\}.
\]
Observe that the sum of concentrations $a+c$ satisfies the following differential equation:
\begin{equation*}
\left\{
\begin{aligned}
\partial_{t}(a+c)- \Delta(\mu(a+c)) & = 0  \qquad \qquad \qquad \qquad \ \mbox{in}\ \Omega_{T},
\\
\nabla_{x}(a+c)\cdot n & = 0  \qquad \qquad \qquad \qquad \  \mbox{on} \ [0,T]\times \partial\Omega,
\\
(a+c)(0,x) & = a_0+c_0\in {\mathrm L^p(\Omega)}  \qquad \mbox{in}\ \Omega,
\end{aligned}
\right.
\end{equation*}
where $\mu:\Omega_T\to\mathbb{R}$ defined as follows:
\[
\mu(t,x) := \left(\frac{d_a a+d_c c}{a+c}\right)(t,x) \qquad \mbox{ for }(t,x)\in\Omega_T.
\]
Observe that $\mu$ satisfies
\[
0 < \min\{d_a,d_c\} \le \mu(t,x) \le \max\{d_a,d_c\} \qquad \mbox{ for all }(t,x)\in\Omega_T.
\]
As the diffusion coefficients $d_a$ and $d_c$ satisfy the closeness condition \eqref{closeness condition imp}, employing the $p^{\textrm{th}}$ order integrability estimate \cite[Proposition 1.1, p.1186]{CDF14} (see Theorem \ref{PE} from the Appendix for the precise statement), we arrive at
\begin{align*}
\beta_0 = \Vert a+c \Vert_{\mathrm L^{p}(\Omega_{0,1})}
\leq 
\left(1+\max\{d_a,d_c\} \frac{\vert d_c-d_a\vert\, C^{PRC}_{\frac{d_a+d_c}{2},p'}}{2-\vert d_c-d_a\vert\, C^{PRC}_{\frac{d_a+d_c}{2},p'}} \right) \Vert  a_0+c_0 \Vert_{\mathrm L^p(\Omega)}. 
\end{align*}
Hence we deduce
\[
\beta_{n} 
\leq 
\max \left\{ \left(1+\max\{d_a,d_c\} \frac{\vert d_c-d_a\vert\, C^{PRC}_{\frac{d_a+d_c}{2},p'}}{2-\vert d_c-d_a\vert\, C^{PRC}_{\frac{d_a+d_c}{2},p'}} \right) \Vert  a_0+c_0 \Vert_{\mathrm L^p(\Omega)}, \, {\texttt{C}}^{\frac{1}{\alpha}}\right\}.
\]
Hence there exists a constant $C_0$, independent of $\tau$, such that
\[
\Vert a \Vert_{\mathrm L^{p}(\Omega_{\tau,\tau+1})}+\Vert c \Vert_{\mathrm L^{p}(\Omega_{\tau,\tau+1})} \leq C_0 \qquad \forall \tau>0.
\]
\end{proof}
The following lemma derives the key integrability estimate for the concentration $b$ which will play an important role in our analysis.
\begin{Lem} \label{N/2 estimate}
Let $p>N\geq 4$ and let $p'$ be its H\"older conjugate. Let $(a,b,c)$ be the solution to the degenrate system \eqref{eq:model 1} and let the nonzero diffusion coefficients $d_a,d_c$ satisfy the closeness condition \eqref{closeness condition imp}. Then there exists a constant $K_3>0$, depending only on the initial data and the dimension N, such that	
\[
\left\Vert b(t,\cdot)\right\Vert_{\mathrm L^{\frac{N}{2}}(\Omega)} \leq K_3\, (1+t)^{\frac{N-2}{N-1}} \qquad \forall t\geq 0.
\]
\end{Lem}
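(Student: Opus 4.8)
The plan is to derive a differential inequality for $y(t):=\int_\Omega b(t,x)^{N/2}\,{\rm d}x$ and then convert it, via the time-uniform cylinder bound of Lemma \ref{L^p with time}, into the asserted polynomial growth. Write $m:=N/2$. Since $b$ solves the pure ordinary differential equation $\partial_t b=c-ab$ and $b$ is smooth and strictly positive, I may multiply by $m\,b^{m-1}$ and integrate over $\Omega$ to obtain
\begin{equation*}
\frac{{\rm d}}{{\rm d}t}\int_\Omega b^{m}\,{\rm d}x = m\int_\Omega b^{m-1}(c-ab)\,{\rm d}x \le m\int_\Omega b^{m-1}c\,{\rm d}x,
\end{equation*}
where the absorption term $-m\int_\Omega a\,b^{m}\,{\rm d}x$ is discarded using $a,b\ge0$. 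Everything now hinges on estimating $\int_\Omega b^{m-1}c\,{\rm d}x$ so as to produce a \emph{sublinear} right-hand side in $y$.

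For this I would apply H\"older's inequality with the conjugate pair $(p,p')$, giving $\int_\Omega b^{m-1}c\,{\rm d}x\le\Vert b\Vert_{\mathrm L^{(m-1)p'}(\Omega)}^{m-1}\,\Vert c(t)\Vert_{\mathrm L^{p}(\Omega)}$, and then interpolate the factor $\Vert b\Vert_{\mathrm L^{(m-1)p'}(\Omega)}$ between $\mathrm L^1$ and $\mathrm L^{m}$. One checks that $1\le(m-1)p'\le m$ for $N\ge4$ and $p>N$, so with $\lambda$ determined by $\tfrac{1}{(m-1)p'}=(1-\lambda)+\tfrac{\lambda}{m}$ one has $\Vert b\Vert_{\mathrm L^{(m-1)p'}(\Omega)}\le\Vert b\Vert_{\mathrm L^1(\Omega)}^{1-\lambda}\Vert b\Vert_{\mathrm L^{m}(\Omega)}^{\lambda}$. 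The whole point of this interpolation is that the mass conservation \eqref{mass conserve 2} bounds $\Vert b(t)\Vert_{\mathrm L^1(\Omega)}\le M_2\vert\Omega\vert$ uniformly in time (as $c\ge0$), so the $\mathrm L^1$ factor is merely a time-independent constant. A short computation shows $\lambda\tfrac{m-1}{m}=1-\gamma$ with $\gamma:=\tfrac{1}{(m-1)p'}$, so in terms of $y=\Vert b\Vert_{\mathrm L^m(\Omega)}^m$ the inequality becomes
\begin{equation*}
\frac{{\rm d}y}{{\rm d}t}\le C\, y^{\,1-\gamma}\,\Vert c(t)\Vert_{\mathrm L^{p}(\Omega)},
\end{equation*}
with $C$ depending only on $N,p,M_2,\vert\Omega\vert$; the exponent $1-\gamma<1$ is precisely the gain.

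Setting $z:=y^{\gamma}$ linearises this to $\tfrac{{\rm d}}{{\rm d}t}z\le\gamma C\,\Vert c(t)\Vert_{\mathrm L^{p}(\Omega)}$, hence $z(t)\le z(0)+\gamma C\int_0^t\Vert c(s)\Vert_{\mathrm L^p(\Omega)}\,{\rm d}s$. To control the time integral I would split $[0,t]$ into unit intervals and use H\"older in time on each, so that $\int_k^{k+1}\Vert c(s)\Vert_{\mathrm L^p(\Omega)}\,{\rm d}s\le\Vert c\Vert_{\mathrm L^p(\Omega_{k,k+1})}\le C_0$ by Lemma \ref{L^p with time}; summing over $k$ yields $\int_0^t\Vert c(s)\Vert_{\mathrm L^p(\Omega)}\,{\rm d}s\le C_0(1+t)$. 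Therefore $y(t)^{\gamma}\le C'(1+t)$, i.e. $\Vert b(t)\Vert_{\mathrm L^{N/2}(\Omega)}=y(t)^{1/m}\le C''(1+t)^{1/(\gamma m)}$. Finally, $\tfrac{1}{\gamma m}=\tfrac{(m-1)p'}{m}=\tfrac{N-2}{N}p'$, and since $p>N$ forces $p'<\tfrac{N}{N-1}$ we get $\tfrac{N-2}{N}p'<\tfrac{N-2}{N-1}$; bounding $(1+t)^{(N-2)p'/N}\le(1+t)^{(N-2)/(N-1)}$ then gives the claim with $K_3$ depending only on the initial data and $N$.

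The main obstacle is exactly securing the sublinear exponent $\tfrac{N-2}{N-1}$. Without interpolating $b$ against the conserved $\mathrm L^1$ mass, a bare H\"older estimate yields a right-hand side proportional to $y^{(m-1)/m}\Vert c\Vert_{\mathrm L^{N/2}(\Omega)}$, and the same integration then only produces \emph{linear} growth $\Vert b\Vert_{\mathrm L^{N/2}(\Omega)}\lesssim(1+t)$. The improvement rests entirely on trading a small amount of integrability of $b$ for the time-independent mass bound, which pushes the power of $y$ strictly below one. The other indispensable ingredient is the time-uniform cylinder estimate of Lemma \ref{L^p with time}, and this is precisely where the closeness condition \eqref{closeness condition imp} on $d_a,d_c$ is used; apart from that, the argument is only H\"older, interpolation, and a scalar ordinary differential inequality.
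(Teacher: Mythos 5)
Your proposal is correct, and it reaches the stated bound by a genuinely different route than the paper. The paper never forms a differential inequality for $\int_\Omega b^{N/2}$: it integrates the pointwise ODE to get $b(t,x)\le b_0(x)+\int_0^t c(s,x)\,{\rm d}s$, raises this to the power $N$ and applies Jensen's inequality to obtain $\Vert b(t,\cdot)\Vert_{\mathrm L^N(\Omega)}^N \le 2^{N-1}(1+t)^{N-1}\bigl(\Vert b_0\Vert_{\mathrm L^N(\Omega)}^N+\Vert c\Vert_{\mathrm L^N(\Omega_t)}^N\bigr)$, controls $\Vert c\Vert_{\mathrm L^N(\Omega_t)}^N\lesssim (1+t)$ by Lemma \ref{L^p with time} (after a H\"older step down from $\mathrm L^p$ to $\mathrm L^N$ on each unit cylinder), concludes $\Vert b(t,\cdot)\Vert_{\mathrm L^N(\Omega)}\lesssim (1+t)$, and only \emph{at the end} interpolates $\mathrm L^{N/2}$ between $\mathrm L^N$ and the conserved $\mathrm L^1$ mass, which is exactly where the exponent $\frac{N-2}{N-1}$ comes from. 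You instead embed the mass interpolation \emph{inside} a Gr\"onwall-type scalar inequality $y'\le C\,y^{1-\gamma}\Vert c(t)\Vert_{\mathrm L^p(\Omega)}$ and integrate the ODE; the two proofs share the same three essential ingredients (dropping $-ab$ by positivity, the time-uniform cylinder bound of Lemma \ref{L^p with time}, and trading integrability of $b$ against the conserved mass), but the mechanisms and the order of steps differ. Your version even yields the slightly sharper exponent $\frac{(N-2)p'}{N}<\frac{N-2}{N-1}$, improving as $p$ grows, at the modest price of invoking smoothness and strict positivity of $b$ to justify differentiating $\int_\Omega b^{N/2}$; the paper's version is insensitive to $p$ in the exponent and avoids any ODE manipulation, working only with pointwise inequalities and one interpolation at the end.
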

\begin{proof}
Recall that $b$ solves the equation
\[
\partial_t b = c - ab.
\]
Non-negativity of $a$ and $b$ implies that
\[
\partial_t b \le c.
\]
Integrating over $(0,t)$ yields
\[
b(t,x) \le b_0(x) + \int_0^t c(s,x)\, {\rm d}s.
\]
Raising it to the power $N$ and employing Jensen's inequality, we obtain
\[
\left( b(t,x)\right)^N 
\le 2^{N-1} \left( \left(b_0(x)\right)^N + \left( \int_0^t c(s,x)\, {\rm d}s\right)^N \right)
\le 2^{N-1}(1+t)^{N-1} \left( \left(b_0(x)\right)^N + \int_0^t \left( c(s,x)\right)^N\, {\rm d}s \right).
\]
Integrating the above inequality in the $x$ variable over $\Omega$ yields
\begin{align}\label{eq:inter-b^N}
\left\Vert b(t,\cdot)\right\Vert_{\mathrm L^N(\Omega)}^N \le 2^{N-1}(1+t)^{N-1} \Big( \left\Vert b_0\right\Vert_{\mathrm L^N(\Omega)}^N + \left\Vert c \right\Vert_{\mathrm L^N(\Omega_t)}^N \Big).
\end{align}
According to Lemma \ref{L^p with time}, for any $p>N$ and for any $\tau\ge0$, we have
\[
\left\Vert c\right\Vert_{\mathrm L^p(\Omega_{\tau,\tau+1})} \le C_0,
\]
where the constant $C_0>0$ depends only on the dimension $N$, the domain $\Omega$ and the initial data. H\"older inequality yields
\[
\left\Vert c\right\Vert_{\mathrm L^N(\Omega_{\tau,\tau+1})} \le \left\Vert c\right\Vert_{\mathrm L^p(\Omega_{\tau,\tau+1})}\, \left\vert \Omega\right\vert^{\frac{p-N}{p}}.
\]
Hence we deduce that for any $t>0$,
\[
\left\Vert c\right\Vert_{\mathrm L^N(\Omega_t)}^N  \le C_0^N\, \left\vert \Omega\right\vert^{\frac{N(p-N)}{p}}\, (1+t).
\]
Using this in \eqref{eq:inter-b^N} yields
\[
\left\Vert b(t,\cdot)\right\Vert_{\mathrm L^N(\Omega)}^N \le 2^{N-1}(1+t)^{N-1} \left( \left\Vert b_0\right\Vert_{\mathrm L^\infty(\Omega)}^N \, \left\vert \Omega \right\vert + C_0^N\, \left\vert \Omega\right\vert^{\frac{N(p-N)}{p}}\, (1+t) \right).
\]
Define a constant $K_3>0$ as follows:
\[
K_3^{\frac{N(N-1)}{N-2}} := 2^{N-1} \left(M_{2}\vert \Omega\vert\right)^{\frac{N}{N-2}} \left( \left\Vert b_0\right\Vert_{\mathrm L^\infty(\Omega)}^N \, \left\vert \Omega \right\vert + C_0^N\, \left\vert \Omega\right\vert^{\frac{N(p-N)}{p}} \right).
\]
Thus we arrive at
\[
\left\Vert b(t,\cdot)\right\Vert_{\mathrm L^N(\Omega)} \le K_3^{\frac{N-1}{N-2}} \left(M_{2}\vert \Omega\vert\right)^{-\frac{1}{N-2}} (1+t).
\]
Note that
\[
\frac{2}{N} = \frac{\alpha}{N} + \frac{1-\alpha}{1} \qquad \mbox{ with }\, \alpha=\frac{N-2}{N-1}.
\]
Hence by interpolation, we have
\[
\left\Vert b(t,\cdot) \right\Vert_{\mathrm L^{\frac{N}{2}}(\Omega)} \leq \left\Vert b(t,\cdot) \right\Vert_{\mathrm L^N(\Omega)}^{\frac{N-2}{N-1}} \, \left\Vert b(t,\cdot) \right\Vert_{\mathrm L^1(\Omega)}^{\frac{1}{N-1}}.
\]
This yields the following estimate:
\[
\left\Vert b(t,\cdot) \right\Vert_{\mathrm L^{\frac{N}{2}}(\Omega)} \leq K_3\, (1+t)^{\frac{N-2}{N-1}}.
\]
\end{proof}
Unlike the case of dimension $N\ge4$, we are able to obtain $\mathrm L^\frac32(\Omega)$-norm estimates on $b(t,\cdot)$ without any closeness assumption on the diffusion coefficients $d_a$ and $d_c$ in the case of dimensions $N\le3$. Next, we prove such an estimate and more in dimension three followed by a similar result in dimensions one and two.	
\begin{Prop}\label{N=3 main}
Let $(a,b,c)$ be the solution to the degenerate system \eqref{eq:model 1} in dimension $N=3$. Then, there exist constants $\hat{K},K_c>0$ and $\mu_c\in\mathbb{N}$, independent of time, such that
\begin{align*}
\left\Vert b(t,\cdot)\right\Vert_{\mathrm L^\frac32(\Omega)} & \le \hat{K}(1+t)^{\frac{5}{6}} \qquad \mbox{ for } \, t\geq 0,
\\
\left\Vert a\right\Vert_{\mathrm L^\frac72(\Omega_{\tau,\tau+1})} & \le K_c \qquad \mbox{ for } \, \tau\geq 0,
\\
\left\Vert c\right\Vert_{\mathrm L^\frac72(\Omega_{\tau,\tau+1})} & \le K_c(1+t)^{\mu_c} \qquad \mbox{ for } \, \tau\geq 0.
\end{align*}
\end{Prop}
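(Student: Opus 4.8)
\emph{Proof proposal.} The plan is to substitute the closeness-based uniform integrability of Lemma~\ref{L^p with time}, which is unavailable when $N=3$, by a parabolic Gagliardo--Nirenberg (Ladyzhenskaya-type) estimate, and then to run a Duhamel bootstrap against the Neumann Green's function. The two ingredients that hold on every unit window are: mass conservation \eqref{mass conserve 1}--\eqref{mass conserve 2}, which gives $\sup_{t\ge0}\|\sqrt a(t)\|_{\mathrm L^2(\Omega)}^2=\sup_{t\ge0}\|a(t)\|_{\mathrm L^1(\Omega)}\le M_1|\Omega|$ and the analogue for $c$; and the time-integrated entropy identity $\frac{d}{dt}(E-E_\infty)=-D$, which makes $\int_0^\infty\|\nabla\sqrt a\|_{\mathrm L^2(\Omega)}^2\,{\rm d}t$ and $\int_0^\infty\|\nabla\sqrt c\|_{\mathrm L^2(\Omega)}^2\,{\rm d}t$ finite. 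Hence $\sqrt a,\sqrt c\in \mathrm L^\infty(0,\infty;\mathrm L^2(\Omega))\cap \mathrm L^2_{\mathrm{loc}}(0,\infty;\mathrm H^1(\Omega))$, with norms bounded \emph{uniformly} over each cylinder $\Omega_{\tau,\tau+1}$. The $N=3$ parabolic interpolation $\|u\|_{\mathrm L^{10/3}(\Omega_{\tau,\tau+1})}\le C\,\|u\|_{\mathrm L^\infty(\mathrm L^2)}^{2/5}\,\|u\|_{\mathrm L^2(\mathrm H^1)}^{3/5}$ applied to $u=\sqrt a,\sqrt c$ then yields the uniform bound
\[
\|a\|_{\mathrm L^{5/3}(\Omega_{\tau,\tau+1})}+\|c\|_{\mathrm L^{5/3}(\Omega_{\tau,\tau+1})}\le C\qquad\forall\,\tau\ge0,
\]
which will play the role that the closeness condition played for $N\ge4$.

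For the bound on $b$ I would keep only $\partial_t b=c-ab\le c$, so $b(t)\le b_0+\int_0^t c\,{\rm d}s$, exactly as in Lemma~\ref{N/2 estimate}. Raising to the power $\tfrac53$, using Hölder in time, integrating over $\Omega$, and summing the uniform $\mathrm L^{5/3}$-bound above over the unit cylinders in $(0,t)$ gives $\int_\Omega b(t)^{5/3}\,{\rm d}x\le C(1+t)^{5/3}$, that is $\|b(t)\|_{\mathrm L^{5/3}(\Omega)}\le C(1+t)$. Since $\tfrac{2}{3}=\tfrac56\cdot\tfrac35+\tfrac16$, interpolating $\mathrm L^{3/2}$ between $\mathrm L^{5/3}$ and the mass-bounded space $\mathrm L^1$ yields
\[
\|b(t)\|_{\mathrm L^{3/2}(\Omega)}\le \|b(t)\|_{\mathrm L^{5/3}(\Omega)}^{5/6}\,\|b(t)\|_{\mathrm L^1(\Omega)}^{1/6}\le \hat K(1+t)^{5/6},
\]
which is the first assertion.

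For $a$ I would localise in time with the cutoff $\phi_\tau$ from Lemma~\ref{L^p with time} and use the Duhamel representation against the Neumann Green's function $G_{d_a}$, together with its Gaussian majorant $G_{d_a}\le g_{d_a}$ recorded there, whose space-time $\mathrm L^z(\Omega_{0,2})$ norm is finite for every $z<1+\tfrac2N=\tfrac53$. Because $-\phi_\tau ab\le0$ and $G_{d_a}\ge0$, one has on the two-unit window the pointwise inequality $\phi_\tau a\le g_{d_a}\ast(a\,\phi_\tau'+\phi_\tau c)$, with \emph{zero} initial contribution since $\phi_\tau(\tau)=0$. Young's convolution inequality, with the admissible exponents $z=\tfrac{21}{13}$ for the term $a\phi_\tau'\in\mathrm L^{3/2}$ and $z=\tfrac{35}{24}$ for the term $\phi_\tau c\in\mathrm L^{5/3}$ (both $<\tfrac53$), and the uniform bounds from the first paragraph, then give $\|a\|_{\mathrm L^{7/2}(\Omega_{\tau,\tau+1})}\le K_c$ uniformly in $\tau$; note that this estimate never sees the growth of $b$.

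The delicate point, and the one I expect to be the main obstacle, is the third assertion. Now $c$ solves $\partial_t c-d_c\Delta c=ab-c\le ab$, the source $ab$ cannot be discarded, and $b$ has only the low integrability $\mathrm L^{5/3}$ while growing in $t$; a single convolution therefore reaches only $\mathrm L^{35/17}\approx\mathrm L^{2.06}$, short of $\mathrm L^{7/2}$. The plan is a finite bootstrap on the exponent. If $\|c\|_{\mathrm L^{p_k}(\Omega_{\tau,\tau+1})}\le C(1+\tau)^{k}$, then $b\le b_0+\int_0^t c$ propagates this to $\|b\|_{\mathrm L^{p_k}(\Omega_{\tau,\tau+1})}\le C(1+\tau)^{k+1}$; Hölder against $a\in\mathrm L^{7/2}$ gives $ab\in\mathrm L^{r_k}$ with $\tfrac1{r_k}=\tfrac27+\tfrac1{p_k}$; and the localized representation $\phi_\tau c\le g_{d_c}\ast(c\,\phi_\tau'+\phi_\tau ab)$ with Young's inequality (taking $z$ close to $\tfrac53$) improves the exponent by $\tfrac1{p_{k+1}}=\tfrac1{p_k}-\tfrac4{35}$ while raising the temporal power by one. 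Starting from $p_0=\tfrac53$, three iterations reach $p_3=\tfrac{35}{9}>\tfrac72$, and Hölder down to $\mathrm L^{7/2}$ gives $\|c\|_{\mathrm L^{7/2}(\Omega_{\tau,\tau+1})}\le K_c(1+\tau)^{\mu_c}$ with $\mu_c=3$. The difficulty is entirely in the bookkeeping: one must maintain the time localisation so that each cylinder only feels the recent history of the sources, check at every stage that the convolution exponent $z$ stays below $\tfrac53$ and that the dominant contribution comes from the $ab$ term rather than from $c\,\phi_\tau'$, and carefully accumulate the polynomial-in-time factors so that $\mu_c$ remains a finite integer.
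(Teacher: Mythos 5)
Your proposal is correct in substance and follows the same skeleton as the paper's proof: uniform $\mathrm L^{5/3}$ bounds on $a,c$ over unit cylinders obtained from the entropy dissipation plus Gagliardo--Nirenberg (you use the parabolic Ladyzhenskaya form, the paper applies the spatial inequality to $\sqrt a,\sqrt c$ at each time and integrates over the cylinder --- these are interchangeable); the inequality $\partial_t b\le c$ plus $\mathrm L^1$--$\mathrm L^{5/3}$ interpolation for the first assertion (identical to the paper); a time-localised heat-kernel smoothing step for $a$; and a finite bootstrap for $c$. The differences are in the smoothing tool and the bookkeeping. Where you invoke Young's convolution against the Gaussian majorant of the Neumann Green's function, the paper compares $\phi_\tau a$ with an auxiliary solution $\zeta$ of a heat equation with source $a\partial_t\phi_\tau+\phi_\tau c$ and applies the integrability estimate of Theorem \ref{estimation 1} (from \cite{CDF14}); both routes give the same admissible range $s<5$ from an $\mathrm L^{5/3}$ source when $N=3$, but the paper pushes $a$ up to $\mathrm L^{9/2}$ rather than stopping at $\mathrm L^{7/2}$, and this stronger bound shortens the bootstrap for $c$ to two steps ($ab\in\mathrm L^{45/37}\Rightarrow c\in\mathrm L^{9/4}\Rightarrow b\in\mathrm L^{9/4}\Rightarrow ab\in\mathrm L^{3/2}\Rightarrow c\in\mathrm L^{7/2}$) instead of your three. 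Your iteration, with gain slightly below $\tfrac{4}{35}$ in the reciprocal exponent and one extra power of $(1+\tau)$ per step, does close after three steps provided the kernel exponents are chosen strictly below $\tfrac53$ with enough margin, so this is a matter of efficiency, not correctness.

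The one point you must still address is the initial window. Your cutoff construction produces bounds only on $\Omega_{\tau+1,\tau+2}$ for $\tau\ge0$, i.e.\ on unit cylinders starting at times $\ge1$, whereas the second and third assertions are claimed for all $\tau\ge0$. The paper treats $\Omega_{0,2}$ separately: for $a$ it uses the Duhamel representation from the data, exploiting $\int_\Omega G_{d_a}(t,0,x,y)\,{\rm d}y\le1$ and $a_0\in\mathrm L^\infty(\Omega)$, and for $c$ it runs a duality argument against a backward dual problem to transfer the bound on $a$ to $c$ (this seeds the constant $K_{0,1}$ in the paper's proof). Your machinery fixes this at no cost --- on $\Omega_{0,2}$ drop the cutoff, use $a\le G_{d_a}\ast a_0+G_{d_a}\ast c$ together with the same Young estimates, and seed the bootstrap for $c$ analogously --- but as written, the claim that the bounds hold uniformly for all $\tau\ge0$ is not yet justified.
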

\begin{proof}
Recall that the solution $(a,b,c)$ to the degenerate system \eqref{eq:model 1} satisfies the following estimate:
\[
\int_0^t \int_\Omega \left( 4 d_a \left\vert \nabla \sqrt{a} \right\vert^2  + 4 d_c \left\vert \nabla \sqrt{c} \right\vert^2 + (ab-c)\ln\left(\frac{ab}{c}\right)\right)\, {\rm d}x\, {\rm d}s
\leq E(a_0,b_0,c_0) - E(a, b, c)
\leq E(a_0,b_0,c_0),
\]
where $E$ is the entropy functional defined in \eqref{entropy} and $(a_\infty, b_\infty, c_\infty)$ is the homogeneous equilibrium state defined by \eqref{eq:equi-state-1}-\eqref{eq:equi-state-2}. The above estimate along with the mass conservation properties \eqref{mass conserve 1}-\eqref{mass conserve 2} results in
\begin{align}\label{eq:apriori-1}
\int_{\tau}^{\tau+2} \left\Vert \sqrt{a}(t,\cdot) \right\Vert_{\mathrm W^{1,2}(\Omega)}^2\, {\rm d}t \leq K_4
\quad
\mbox{ and }
\quad
\int_{\tau}^{\tau+2} \left\Vert \sqrt{c}(t,\cdot) \right\Vert_{\mathrm W^{1,2}(\Omega)}^2\, {\rm d}t \leq K_4 
\qquad \mbox{ for any }\tau\ge0,
\end{align}
where $K_4>0$ is the following constant
\[
K_4 := \frac{E(a_0,b_0,c_0)}{4d_a} +  \frac{E(a_0,b_0,c_0)}{4d_c} + 2M_1 \left\vert \Omega\right\vert + 2M_2 \left\vert \Omega\right\vert.
\]
Gagliardo-Nirenberg inequality says that there exist constants $K_5=K_5(\Omega,N)$ and $K_6=K_6(\Omega,N,r,q)$ where $N$ is the dimension and $1\le p,q,r\le+\infty$ such that for nonnegative integers $j<m$,
\[
\left\Vert D^j u \right\Vert_{\mathrm L^p(\Omega)} \leq K_5 \left\Vert D^m u \right\Vert_{\mathrm L^r(\Omega)}^{\alpha} \left\Vert u \right\Vert_{\mathrm L^q(\Omega)}^{1-\alpha} + K_6 \left\Vert u \right\Vert_{\mathrm L^q(\Omega)}
\quad
\mbox{ for any }u\in\mathrm L^q(\Omega) \mbox{ satisfying }D^m u\in \mathrm L^r(\Omega).
\]		
Furthermore, the various parameters in the above inequality satisfy
\[
\frac{1}{p}=\frac{j}{N}+\left(\frac{1}{r}-\frac{m}{N}\right)\alpha\ + \frac{1-\alpha}{q}
\qquad
\mbox{ and }
\qquad
\frac{j}{N}\leq \alpha \leq 1.
\]
Applying the Gagliardo-Nirenberg inequality for $\sqrt{a}$ by taking $r=2, m=1, j=0, N=3, q=2$ and $\alpha=\frac{3}{5}$ yields
\[
\left\Vert \sqrt{a}(t,\cdot) \right\Vert_{\mathrm L^\frac{10}{3}(\Omega)} 
\le K_5 \left\Vert \nabla \sqrt{a}(t,\cdot) \right\Vert_{\mathrm L^2(\Omega)}^\frac35\, \left\Vert \sqrt{a}(t,\cdot) \right\Vert_{\mathrm L^2(\Omega)}^\frac25
+ K_6 \left\Vert \sqrt{a}(t,\cdot) \right\Vert_{\mathrm L^2(\Omega)}.
\]
Raising it to the power $\frac{10}{3}$ yields
\begin{align*}
\left\Vert \sqrt{a}(t,\cdot) \right\Vert_{\mathrm L^\frac{10}{3}(\Omega)}^\frac{10}{3}
& \le 2^\frac73 K_5^\frac{10}{3} \left\Vert \nabla \sqrt{a}(t,\cdot) \right\Vert_{\mathrm L^2(\Omega)}^2\, \left\Vert \sqrt{a}(t,\cdot) \right\Vert_{\mathrm L^2(\Omega)}^\frac43
+ 2^\frac73 K_6^\frac{10}{3} \left\Vert \sqrt{a}(t,\cdot) \right\Vert_{\mathrm L^2(\Omega)}^\frac{10}{3}
\\
& = 2^\frac73 K_5^\frac{10}{3} \left\Vert \nabla \sqrt{a}(t,\cdot) \right\Vert_{\mathrm L^2(\Omega)}^2\, M_1^\frac43 \, \left\vert \Omega\right\vert^\frac43
+ 2^\frac73 K_6^\frac{10}{3} \, M_1^\frac{10}{3} \, \left\vert \Omega\right\vert^\frac{10}{3}.
\end{align*}
Take an arbitrary $\tau\ge0$ and integrate the above inequality from $\tau$ to $\tau+2$ in the $t$ variable to get
\[
\int_\tau^{\tau+2}\left\Vert \sqrt{a}(t,\cdot) \right\Vert_{\mathrm L^\frac{10}{3}(\Omega)}^\frac{10}{3}\, {\rm d}t
\le 2^\frac73 K_5^\frac{10}{3}\, K_4\, M_1^\frac43 \, \left\vert \Omega\right\vert^\frac43
+ 2^\frac{10}{3} K_6^\frac{10}{3} \, M_1^\frac{10}{3} \, \left\vert \Omega\right\vert^\frac{10}{3},
\]
where we have used the apriori bound \eqref{eq:apriori-1}. Similarly, we can obtain the following estimate for $\sqrt{c}$:
\[
\int_\tau^{\tau+2}\left\Vert \sqrt{c}(t,\cdot) \right\Vert_{\mathrm L^\frac{10}{3}(\Omega)}^\frac{10}{3}\, {\rm d}t
\le 2^\frac73 K_5^\frac{10}{3}\, K_4\, M_1^\frac43 \, \left\vert \Omega\right\vert^\frac43
+ 2^\frac{10}{3} K_6^\frac{10}{3} \, M_1^\frac{10}{3} \, \left\vert \Omega\right\vert^\frac{10}{3}.
\]
Define a constant $K_7>0$ as follows:
\[
K_7 := \left( 2^\frac73 K_5^\frac{10}{3}\, K_4\, M_1^\frac43 \, \left\vert \Omega\right\vert^\frac43
+ 2^\frac{10}{3} K_6^\frac{10}{3} \, M_1^\frac{10}{3} \, \left\vert \Omega\right\vert^\frac{10}{3}\right)^\frac35.
\]
We have thus arrived at the following integrability estimates:
\begin{align}\label{first Lp for c}
\left\Vert a \right\Vert_{\mathrm L^{\frac{5}{3}}(\Omega_{\tau,\tau+2})} \leq K_7
\quad
\mbox{ and }
\quad
\left\Vert c \right\Vert_{\mathrm L^{\frac{5}{3}}(\Omega_{\tau,\tau+2})} \leq K_7
\quad
\mbox{ with }\tau\ge0.
\end{align}
Recall that non-negativity of $a,b$ implies $\partial_t b \leq c$ which in turn implies that
\[
b(t,x) \le b_0(x) + \int_0^t c(s,x)\, {\rm d}s.
\]
Raising the above inequality to the power $\frac53$ yields
\[
\left(b(t,x)\right)^\frac53 \le 2^\frac23 (1+t)^\frac23 \left( \left(b_0(x)\right)^\frac53 + \int_0^t \left(c(s,x)\right)^\frac53\, {\rm d}s\right).
\]
Integrating the above inequality over $\Omega$ yields
\begin{align*}
\int_\Omega \left(b(t,x)\right)^\frac53\, {\rm d}x & \le 2^\frac23 (1+t)^\frac23 \left( \int_\Omega \left(b_0(x)\right)^\frac53\, {\rm d}x + \int_0^t \int_\Omega \left(c(s,x)\right)^\frac53\, {\rm d}x\, {\rm d}s\right)
\\
& \le 2^\frac23 (1+t)^\frac23 \left( \left\Vert b_0\right\Vert_{\mathrm L^\infty(\Omega)}^\frac53\, \left\vert \Omega \right\vert + \sum_{\tau=0}^{1+\lfloor t\rfloor}\left\Vert c\right\Vert_{\mathrm L^\frac53(\Omega_{\tau,\tau+1})}^\frac53\right)
\\
& \le 2^\frac23 (1+t)^\frac23 \left( \left\Vert b_0\right\Vert_{\mathrm L^\infty(\Omega)}^\frac53\, \left\vert \Omega \right\vert + (1+t) K_7^\frac53\right)
\\
& \le 2^\frac23 (1+t)^\frac53\left( \left\Vert b_0\right\Vert_{\mathrm L^\infty(\Omega)}^\frac53\, \left\vert \Omega \right\vert + K_7^\frac53\right).
\end{align*}
Define a constant $K_8>0$ as follows:
\[
K_8 := \left\Vert b_0\right\Vert_{\mathrm L^\infty(\Omega)}^\frac53\, \left\vert \Omega \right\vert + K_7^\frac53.
\]
We have thus arrived at the following integrability estimate:
\begin{align}\label{5/3}
\left\Vert b(t,\cdot)\right\Vert_{\mathrm L^\frac53(\Omega)}^\frac53 \le K_8 (1+t)^\frac53 \qquad \mbox{ for }t\ge0.
\end{align}
Note that 
\[
\frac{2}{3}= \frac{3\alpha}{5}+\frac{1-\alpha}{1} \qquad \mbox{ with }\, \alpha=\frac{5}{6}.
\]
Hence by interpolation, we have
\begin{align*}
\left\Vert b(t,\cdot)\right\Vert_{\mathrm L^\frac32(\Omega)}
& \le \left\Vert b(t,\cdot)\right\Vert_{\mathrm L^\frac53(\Omega)}^\frac56 \, \left\Vert b(t,\cdot)\right\Vert_{\mathrm L^1(\Omega)}^\frac16
\le K_8^\frac12\, M_2^\frac16 \left\vert \Omega \right\vert^\frac16 (1+t)^\frac56,
\end{align*}
thanks to the estimate \eqref{5/3} and the mass conservation property \eqref{mass conserve 2}.

The species $a$ is a positive subsolution of the following equation in the time interval $(0,2)$.
\begin{equation}\label{eq:tilde a}
	\left \{
	\begin{aligned}
		\partial_{t}a(t,x)-d_a\Delta a(t,x)
		\leq &c  \qquad \qquad \qquad  \mbox{ in }\Omega_{0,2}
		\\
		\nabla a(t,x)\cdot n=&  0 \qquad   \qquad \quad   \ \  \mbox{ on } \partial\Omega_{0,2} 
		\\
		a(0,x) =& a_0 \qquad   \quad  \qquad \ \ \mbox{ in } \Omega.
	\end{aligned}
	\right.
\end{equation}
The solution corresponding to the equation \eqref{eq:tilde a} can be expressed as:
\[
a\leq \int_{\Omega}G_{d_a}(t,0,x,y)a_0(y) \rm{d}y+ \int_{0}^{t}\int_{\Omega} G_{d_a}(t,s,x,y)c(s,y)\rm{d}y\rm{d}s.
\]
where $G_{d_a}$ denotes the Green's function associated with the operator $\partial_t-d_a\Delta$ with Neumann boundary condition. We use the fact that $\displaystyle{a_0(y)\in\mathrm{L}^{\infty}(\Omega)}$ and $\displaystyle{\int_{\Omega}G_{d_a}(t,0,x,y) \rm{d}y \leq 1}$ for all $t\in(0,2)$. It yields
\[
\left\vert \int_{\Omega}G_{d_a}(t,0,x,y)a_0(y) \rm{d}y \right\vert \leq \Vert a_0\Vert_{\mathrm{L}^{\infty}(\Omega)}.
\]
We use the following Green's function estimate is from \cite{Morra83}\cite{ML15}: there exists a constant $K_1>0$, depending only on the domain, such that
\[
0\leq G_{d_a}(t_1,s,x,y) \leq \frac{K_1}{(4\pi( t_1-s))^\frac{N}{2}}e^{-\kappa \frac{\Vert x-y \Vert^2}{(t_1-s)}} =: g_{d_a}(t_1-s,x-y),
\]
for some constant $\kappa>0$ depending only on $\Omega$ and the diffusion coefficient $d$. Note that
\[
\left\Vert g_d\right\Vert_{\mathrm{L}^{z}(\Omega_{0,2})} \leq K_2 \qquad \forall z\in\left[1,1+\frac{N}{2}\right)
\]
for some constant positive $K_2$ depending on $z$. We, in particular choose $\frac{45}{28}$, which is an admissible number for $N=3$. As we have $\displaystyle{1+\frac{2}{9}=\frac{28}{45}+\frac{3}{5}}$, Young's convolution inequality yields 
	\[
	\left\Vert \int_{0}^{t}\int_{\Omega} G_{d_a}(t,s,x,y)c(s,y)\rm{d}y\rm{d}s  \right\Vert_{\mathrm{L}^{\frac{9}{2}}(\Omega_{0,2})} \leq \Vert g_d\Vert_{\mathrm{L}^{\frac{45}{28}}(\Omega_{0,2})} \Vert c\Vert_{\mathrm{L}^{\frac{5}{3}}(\Omega_{0,2})}\leq K_2 \Vert c\Vert_{\mathrm{L}^{\frac{5}{3}}(\Omega_{0,2})}.
	\]
Exploiting the non-negativity of the species $a$ and using the estimate from \eqref{first Lp for c}, we obtain
\[
\Vert a\Vert_{\mathrm{L}^{4.5}(\Omega_{0,2})}\leq \Vert a_0\Vert_{\mathrm{L}^{\infty}(\Omega)} 2^{\frac{2}{9}}\vert \Omega\vert^{\frac{2}{9}}+K_2K_7.
\]
Consider  $0\leq\Theta \in C_c^{\infty}(\Omega_{0,2})$(space of all compactly supported smooth function) satisfies
	\begin{equation*}
		\left \{
		\begin{aligned}
-[\partial_t \phi+d_c \Delta\phi]=& \Theta \qquad \ \ \ \mbox{in}\ \Omega_{0,2}\\
\nabla \phi.n=& 0 \qquad \mbox{on}\ (0,2)\times \partial\Omega\\
\phi(2)=&0 \qquad \ \ \ \ \mbox{in}\ \Omega.
	\end{aligned}
\right .
\end{equation*}
	
We have  $\phi \geq 0$,  and the following estimates for a constant $C_{q,d_c}>0$ ( $q\in(1,\infty)$, arbitrary)\cite{Pie2010}\cite{Amann1985}
	
	\[
\Vert\phi_t \Vert_{\mathrm{L}^q(\Omega_{0,2})}+\Vert\Delta \phi \Vert_{\mathrm{L}^q(\Omega_{0,2})}+\sup\limits_{s\in[0,2]}\Vert\phi(s)\Vert_{\mathrm{L}^q(\Omega)}+\Vert\phi\Vert_{\mathrm{L}^q((0,2)\times \partial\Omega)}\leq C_{q,d_c}\Vert \Theta \Vert_{\mathrm{L}^q(\Omega_{0,2})}.
	\]
We derive the following estimate of $c$, through integration by parts:
	\begin{align*}
\int_{\Omega_{0,2}} c \Theta =& \int_{\Omega} c_0 \phi(0)-\int_{\Omega_{0,2}} (\partial_t -d_c \Delta)c \phi\\
=&\int_{\Omega} c_0 \phi(0)+ \int_{\Omega_{0,2}} (\partial_t -d_a\Delta)a \phi =\int_{\Omega} (c_0+a_0) \phi(0)+ \int_{\Omega_{0,2}} a \partial_t \phi +\int_{\Omega_{0,2}} a d_a \Delta \phi\\
\leq & \Vert a_0+c_0\Vert_{\mathrm{L}^p(\Omega)} \Vert \phi(0)\Vert_{\mathrm{L}^q(\Omega)}+\Vert a\Vert_{L^p(\Omega_{0,2})} \Vert \partial_t \phi \Vert_{L^q(\Omega_{0,2})}+d_a \Vert a\Vert_{\mathrm{L}^p(\Omega_{0,2})} \Vert \Delta \phi \Vert_{\mathrm{L}^q(\Omega_{0,2})}.
	\end{align*}
Choose $q=\frac{4.5}{4.5-1}$, i.e., the H\"older conjugate of $4.5$, duality estimate yields
	\[
\Vert c\Vert_{\mathrm{L}^{4.5}(\Omega_{0,2})}\leq \Vert a_0+c_0\Vert_{\mathrm{L}^{4.5}(\Omega)}+2C_{q,d_c}\Vert a\Vert_{\mathrm{L}^{4.5}(\Omega_{0,2})}.
	\]
	
Choose $K_{0,1}:=\max \left \{\Vert a_0\Vert_{\mathrm{L}^{\infty}(\Omega)} 2^{\frac{2}{9}}\vert \Omega\vert^{\frac{2}{9}}+K_2K_7+\Vert a_0+c_0\Vert_{L^{4.5}(\Omega)} \right\}$, we have the following estimate
	\[
\Vert a \Vert_{\mathrm{L}^{4.5}(\Omega_{0,1})}+ \Vert c \Vert_{\mathrm{L}^{4.5}(\Omega_{0,1})} \leq K_{0,1}.
	\]	
Let $\phi:[0,\infty)\rightarrow [0,1]$ be a smooth function such that $\phi(0)=0$ and 
\begin{align*}
\phi(x) & = 1 \qquad \mbox{ for }x\in[1,\infty)
\\
0 \le \phi'(x) & \le M \qquad \mbox{ for }x\in[0,\infty)
\end{align*}
for some constant $M>0$. For an arbitrary $\tau>0$, consider $\phi_\tau:[\tau,\infty)\to[0,1]$ defined as $\phi_{\tau}(s) := \phi(s-\tau)$ for $s\in[\tau,\infty)$. Then, the product $\phi_\tau(t)a(t,x)$ satisfies
\begin{equation} \nonumber
\left\{
\begin{aligned}
\partial_t \left( \phi_{\tau}a\right) - d_a \Delta \left( \phi_{\tau}a\right) & = a \partial_t \phi_{\tau} + \phi_{\tau}( c-ab) \qquad \ \ \mbox{ in } \Omega_{\tau,\tau+2}\\
\phi_{\tau}\nabla a \cdot n & = 0 \qquad \qquad \qquad \qquad \qquad\ \mbox { on } \partial\Omega_{\tau,\tau+2}\\
\phi_{\tau}(\tau)a(\tau,x) & = 0 \qquad \qquad \qquad \qquad \qquad \ \  \mbox{ in }\Omega.
\end{aligned}
\right.
\end{equation}
Let $\zeta(t,x)$ be the solution to the following initial boundary value problem:
\begin{equation*}
\left\{
\begin{aligned}
\partial_t \zeta - d_a \Delta \zeta & = a \partial_t \phi_{\tau} + \phi_{\tau}c \qquad \qquad   \mbox{in} \ \Omega_{\tau,\tau+2}
\\
\nabla \zeta \cdot n & = 0 \qquad \qquad \qquad \qquad \mbox{on}\  \partial\Omega_{\tau,\tau+2}
\\
\zeta(\tau,x) & = 0 \qquad \qquad \qquad \qquad \ \ \mbox{in} \ \Omega.
\end{aligned}
\right.
\end{equation*}
By employing the maximum principle for the heat equation and exploiting the positivity of $a,b$, we deduce
\begin{align}\label{eq:pointwise-bd}
\phi_\tau(t,x) a(t,x) \le \zeta(t.x) \qquad \mbox{ for }(t,x)\in\Omega_{\tau,\tau+2}.
\end{align}
Employing the integrability estimate for $\zeta$ available from \cite[Lemma 3.3]{CDF14} (see Theorem \ref{estimation 1} from the Appendix  for the precise statement), we deduce that
\[
\left\Vert \zeta \right\Vert_{\mathrm L^s(\Omega_{\tau,\tau+2})} 
\le C_{IE}(\Omega,d_a,s) \left\Vert a \partial_t \phi_{\tau} + \phi_{\tau}c \right\Vert_{\mathrm L^\frac53(\Omega_{\tau,\tau+2})}
\le 2 C_{IE}(\Omega,d_a,s) (1+M) K_7
\qquad \mbox{ for any }s<5,
\]
where the second inequality is a consequence of the properties of smooth function $\phi$ and the integrability estimates on $a$ and $c$ from \eqref{first Lp for c}. Thanks to the pointwise bound \eqref{eq:pointwise-bd}, we in particular (taking $s=\frac92$) have
\[
\left\Vert a \right\Vert_{\mathrm L^\frac92(\Omega_{\tau+1,\tau+2})}
\le 
\left\Vert \zeta \right\Vert_{\mathrm L^\frac92(\Omega_{\tau,\tau+2})}
\le 2 C_{IE}(\Omega,d_a) (1+M) K_7
\qquad \mbox{ for }\tau\ge0.
\]
Hence we deduce for $\tau\ge0$,
\begin{align}\label{first Lp for a}
\left\Vert a \right\Vert_{\mathrm L^\frac92(\Omega_{\tau,\tau+1})}
\le \left\Vert a \right\Vert_{\mathrm L^\frac92(\Omega_{0,1})}
+ 2 C_{IE}(\Omega,d_a) (1+M) K_7 
\le K_{0,1} + 2 C_{IE}(\Omega,d_a) (1+M) K_7 
=: K_9.
\end{align}
Furthermore, integrating the estimate \eqref{5/3} in the time variable from $\tau$ to $\tau+1$ helps us obtain
\[
\left\Vert b\right\Vert_{\mathrm L^\frac53(\Omega_{\tau,\tau+1})} \le 2 K_8^\frac35 (1+\tau) =: K_{10} (1+\tau) \qquad \mbox{ for }\tau\ge0.
\]
Observe that an application of the H\"older inequality yields
\[
\left\Vert ab\right\Vert_{\mathrm L^{\frac{45}{37}}(\Omega_{\tau,\tau+1})} 
\le \left\Vert a \right\Vert_{\mathrm L^\frac92(\Omega_{\tau,\tau+1})}^\frac{2}{9} \, \left\Vert b\right\Vert_{\mathrm L^\frac53(\Omega_{\tau,\tau+1})}^\frac{3}{5}
\le K_9^{\frac{2}{9}} K_{10}^\frac35 (1+\tau)^\frac35
\]
%Note that 
%\[
%\int_{\tau}^{\tau+1} \int_\Omega \left(a b\right)^r \, {\rm d}x\, {\rm d}s \le \left( \int_{\tau}^{\tau+1} \int_\Omega a^{rp}\, {\rm d}x\, {\rm d}s \right)^{\frac{1}{p}} \left( \int_{\tau}^{\tau+1} \int_\Omega b^{rq}\, {\rm d}x\, {\rm d}s \right)^{\frac{1}{q}}
%\]
%We need $pr=\frac92$ and $qr=\frac53$. Further, we should have
%\[
%\frac{1}{p} + \frac{1}{q} = 1 \implies \frac{2r}{9} + \frac{3r}{5} = 1 \implies r = \frac{45}{37} 
%\]
%Hence
%\[
%\frac{1}{p} = \frac{2r}{9} = \frac{90}{333}
%\quad
%\mbox{ and }
%\quad
%\frac{1}{q} = \frac{3r}{5} = \frac{27}{37}
%\]
%Observe that an application of the H\"older inequality yields
%\[
%\int_{\tau}^{\tau+1} \int_\Omega \left(a b\right)^\frac{45}{37}  \, {\rm d}x\, {\rm d}s \le \left\Vert a \right\Vert_{\mathrm L^\frac92(\Omega_{\tau,\tau+1})}^\frac{90}{333} \, \left\Vert b\right\Vert_{\mathrm L^\frac53(\Omega_{\tau,\tau+1})}^\frac{27}{37}
%\]
Using the smooth function $\phi$ defined earlier, remark that the product $\phi_\tau(t)c(t,x)$ satisfies
\begin{equation*}
\left\{
\begin{aligned}
\partial_t \left(\phi_{\tau}c \right) - d_c \Delta \left( \phi_{\tau} c\right) = & c \partial_t \phi_{\tau} +\phi_{\tau}( ab-c) \qquad \mbox{ in } \Omega_{\tau,\tau+2}\\
\phi_{\tau}\nabla c \cdot n & = 0 \qquad \qquad \qquad \qquad\ \mbox { on } \partial\Omega_{\tau,\tau+2}\\
\phi_{\tau}(\tau)c(\tau,x) & = 0 \qquad \qquad \qquad \qquad \   \mbox{ in }\Omega.
\end{aligned}
\right.
\end{equation*}
where $\phi_{\tau}(s) := \phi(s-\tau)$ for $s\in[\tau,\infty)$ with $\tau>0$. Thanks to the positivity of $c$, employing the maximum principle for the heat equation and the integrability estimate from \cite[Lemma 3.3]{CDF14} (see Theorem \ref{estimation 1} from the Appendix for the precise statement), we arrive at
\begin{align*}
\left\Vert \phi_{\tau}c \right\Vert_{\mathrm L^s(\Omega_{\tau,\tau+2})}
& \le C_{IE}(\Omega,d_c,s) \left\Vert c \partial_t \phi_{\tau} +\phi_{\tau} ab \right\Vert_{\mathrm L^\frac{45}{37}(\Omega_{\tau,\tau+2})}
\\
& \le C_{IE} \left\{ M \left\Vert c\right\Vert_{\mathrm L^{\frac{45}{37}} (\Omega_{\tau,\tau+2})} + \left\Vert ab\right\Vert_{\mathrm L^{\frac{45}{37}} (\Omega_{\tau,\tau+2})} \right\}
\\
& \le C_{IE} \left\{ M \left\Vert c\right\Vert^\frac59_{\mathrm L^1 (\Omega_{\tau,\tau+2})} \left\Vert c\right\Vert^\frac49_{\mathrm L^\frac53 (\Omega_{\tau,\tau+2})} + \left\Vert ab\right\Vert_{\mathrm L^{\frac{45}{37}} (\Omega_{\tau,\tau+2})} \right\}
\\
& \le C_{IE} \left\{ M M_1^\frac59 K_7^\frac49 + 2K_9^{\frac{2}{9}} K_{10}^\frac35 (1+\tau)^\frac35 \right\}
\end{align*}
for any $s<\frac{(5)\frac{45}{37}}{(5-\frac{90}{37})}$.  Taking $s=\frac{9}{4}$ leads to
\[
\left\Vert \phi_{\tau}c \right\Vert_{\mathrm L^\frac{9}{4}(\Omega_{\tau,\tau+2})} \le K_{11} (1+\tau)^{\frac35}
\]
where $K_{11}:= C_{IE}(\Omega,d_c) \max\left\{M M_1^\frac59 K_7^\frac49, 2K_9^{\frac{2}{9}} K_{10}^\frac35\right\}$. Hence we deduce
\[
\left\Vert c\right\Vert_{\mathrm L^\frac{9}{4}(\Omega_{\tau,\tau+1})} 
\le \left\Vert c\right\Vert_{\mathrm L^\frac{9}{4}(\Omega_{0,1})} + K_{11} (1+\tau)^{\frac35}
\le K_{0,1} \left\vert \Omega\right\vert^\frac12 + K_{11} (1+\tau)^{\frac35} \qquad \mbox{ for }\tau\ge0.
\]
Again, exploiting the relation $\partial_t b \le c$, we arrive at
\begin{align*}
\left\Vert b(t,\cdot) \right\Vert^\frac{9}{4}_{\mathrm L^\frac94(\Omega)} 
& \le 2^\frac54 (1+t)^\frac54 \left( \left\Vert b_0\right\Vert_{\mathrm L^\infty(\Omega)}^\frac94\, \left\vert \Omega \right\vert + (1+t)^\frac{47}{20} K_{12}^\frac94\right)
\le K_{13} (1+t)^\frac{18}{5},
\end{align*}
thus deducing
\[
\left\Vert b \right\Vert_{\mathrm L^\frac94(\Omega_{\tau,\tau+1})} \le \left(\frac{5K_{13}}{23}\right)^\frac{4}{9} (1+\tau)^{\frac{92}{45}} \qquad \mbox{ for }\tau\ge0.
\]
H\"older inequality results in 
\[
\left\Vert ab\right\Vert_{\mathrm L^\frac32(\Omega_{\tau,\tau+1})} 
\le \left\Vert a\right\Vert_{\mathrm L^\frac92(\Omega_{\tau,\tau+1})} \left\Vert b\right\Vert_{\mathrm L^\frac94(\Omega_{\tau,\tau+1})}
\le K_9 \left(\frac{5K_{13}}{23}\right)^\frac{4}{9} (1+\tau)^{\frac{92}{45}},
\]
where we have used the bounds obtained earlier. As $\frac72 < \frac{(5)\frac32}{5-3}$, employing the maximum principle for the heat equation and the integrability estimate from \cite[Lemma 3.3]{CDF14} (see Theorem \ref{estimation 1}  for the precise statement) as before, we get
\[
\left\Vert c\right\Vert_{\mathrm L^\frac72(\Omega_{\tau,\tau+1})} \le K_{14} (1+\tau)^{\frac{92}{45}} \qquad \mbox{ for }\tau\ge0.
\]
\end{proof}

	\vspace{.3cm}
	
	In our above proof Gagliardo-Nirenberg inequality played an important role. We will use the same technique to obtain  $\mathrm{L}^{\frac{3}{2}}(\Omega)$ integral estimation of  $b$  for dimension $N=1,2$.
	
	\vspace{.3cm}
	\begin{Lem} \label{2.0.3}
		Let $(a,b,c)$ be the solution of the degenerate system \eqref{eq:model 1} in dimensions $N=1,2$. Then, there exists positive constants $\hat{K}$ and $K_c>0$ and some $\mu_c\in\mathbb{N}$, independent of time, such that
		\begin{equation*}
			\left \{
			\begin{aligned}
				\Vert b \Vert_{\mathrm{L}^{\frac{3}{2}}(\Omega)}&\leq \hat{K}(1+\tau)^{\frac{5}{6}} \qquad \ \forall  \tau\geq 0,
				\\
				\Vert a \Vert_{\mathrm{L}^{3.5}(\Omega_{\tau,\tau+1})} &\leq K_c(1+\tau)^{\mu_c} \qquad \forall \tau\geq 0,
				\\
				\Vert c \Vert_{\mathrm{L}^{3.5}(\Omega_{\tau,\tau+1})} &\leq K_c(1+\tau)^{\mu_c} \qquad \forall \tau\geq 0.
			\end{aligned}
			\right .
		\end{equation*}
	\end{Lem}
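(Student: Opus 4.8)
The plan is to reproduce, almost verbatim, the argument of Proposition \ref{N=3 main}, the only genuine change being the dimension-dependent exponents in the Gagliardo--Nirenberg inequality. First I would record the a priori bound coming from the entropy dissipation: integrating the relation $\frac{d}{dt}\big(E(a,b,c)-E(a_\infty,b_\infty,c_\infty)\big)=-D(a,b,c)$ and using the positivity of $D$ yields, exactly as in \eqref{eq:apriori-1}, that $\int_\tau^{\tau+2}\|\sqrt a(t,\cdot)\|_{\mathrm{W}^{1,2}(\Omega)}^2\,{\rm d}t\le K_4$ and $\int_\tau^{\tau+2}\|\sqrt c(t,\cdot)\|_{\mathrm{W}^{1,2}(\Omega)}^2\,{\rm d}t\le K_4$ for every $\tau\ge0$, the $\mathrm{L}^2$-in-space control of $\sqrt a$ and $\sqrt c$ being simply the mass conservation \eqref{mass conserve 1}--\eqref{mass conserve 2}.

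Next I would apply the Gagliardo--Nirenberg inequality to $\sqrt a$ and $\sqrt c$ with $j=0$, $m=1$, $r=q=2$ and the interpolation exponent $\alpha=\frac{N}{N+2}$ chosen so that the power of the gradient term equals $2$; this produces the spatial exponent $p=\frac{2(N+2)}{N}$, i.e. $p=4$ for $N=2$ and $p=6$ for $N=1$. Raising to the power $\frac{2(N+2)}{N}$ and integrating in time against the dissipation bound \eqref{eq:apriori-1} gives time-independent estimates $\|a\|_{\mathrm{L}^{(N+2)/N}(\Omega_{\tau,\tau+2})}\le K_7$ and $\|c\|_{\mathrm{L}^{(N+2)/N}(\Omega_{\tau,\tau+2})}\le K_7$. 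Since $\frac{N+2}{N}\ge\frac53$ for $N\le3$, H\"older's inequality turns these into a time-independent $\mathrm{L}^{5/3}$ bound for $a$ and $c$ on unit cylinders, which is precisely \eqref{first Lp for c}. With \eqref{first Lp for c} available, the bound on $b$ follows identically to Proposition \ref{N=3 main}: non-negativity of $a,b$ gives $\partial_t b\le c$, hence $b(t,x)\le b_0(x)+\int_0^t c(s,x)\,{\rm d}s$; raising to the power $\frac53$, applying Jensen, integrating over $\Omega$ and summing the $\mathrm{L}^{5/3}$ bounds over unit cylinders yields $\|b(t,\cdot)\|_{\mathrm{L}^{5/3}(\Omega)}^{5/3}\le K_8(1+t)^{5/3}$, and interpolation between $\mathrm{L}^{5/3}$ and $\mathrm{L}^1$ (the latter controlled by \eqref{mass conserve 2}) with exponent $\frac56$ gives $\|b(t,\cdot)\|_{\mathrm{L}^{3/2}(\Omega)}\le\hat K(1+t)^{5/6}$, the first claimed estimate.

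For the $\mathrm{L}^{7/2}$ bounds on $a$ and $c$ I would run the same bootstrap as in Proposition \ref{N=3 main}. Writing $\phi_\tau a$ as the solution of a heat equation with source $a\,\partial_t\phi_\tau+\phi_\tau c$ and comparing, via the maximum principle, with the solution $\zeta$ of the corresponding problem, the integrability estimate \cite[Lemma 3.3]{CDF14} (Theorem \ref{estimation 1}) upgrades the $\mathrm{L}^{(N+2)/N}$ control of the source into a higher $\mathrm{L}^s$ bound for $a$; H\"older against the $b$ bound just proved controls the product $ab$, which feeds the equation for $c$ and closes the iteration. Because the embedding $\mathrm{W}^{1,2}(\Omega)\hookrightarrow\mathrm{L}^q(\Omega)$ is stronger in dimensions $N=1,2$ than in $N=3$, the bootstrap reaches the exponent $\frac72$ at least as readily, and the time-growth factors $(1+\tau)^{\mu_c}$ accumulate solely from the polynomial growth of $\|b\|$, and hence of $\|ab\|$, in the source term.

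The main obstacle I anticipate is purely one of bookkeeping: one must select the chain of H\"older and interpolation exponents so that the bootstrap closes exactly at $\mathrm{L}^{7/2}$, and simultaneously track the accumulating powers of $(1+\tau)$ in order to pin down the integer $\mu_c$. Conceptually nothing new arises relative to Proposition \ref{N=3 main}; the improved low-dimensional Sobolev embeddings only render the exponent arithmetic more forgiving, which is why the statement is delegated to a lemma with the proof left to the reader.
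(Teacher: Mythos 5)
Your proposal follows essentially the same route as the paper: the entropy-dissipation a priori bound, then Gagliardo--Nirenberg applied to $\sqrt a,\sqrt c$ with exactly the exponents the paper uses (your unified choice $\alpha=\tfrac{N}{N+2}$ gives $\alpha=\tfrac12$ for $N=2$ and $\alpha=\tfrac13$ for $N=1$, matching the paper), reduction by H\"older to the $\mathrm L^{5/3}$ cylinder bounds, the $\partial_t b\le c$ integration and interpolation for the $\mathrm L^{3/2}$ estimate of $b$, and finally the same cutoff/maximum-principle bootstrap via Theorem \ref{estimation 1} to reach $\mathrm L^{7/2}$ for $a$ and $c$. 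This is precisely the paper's argument (which itself defers the tail of the proof to Proposition \ref{N=3 main}), so the proposal is correct.
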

	\begin{proof}
		The proof is similar to the proof of proposition \ref{N=3 main}. We use Gagliardo-Nirenberg inequality which says there exists a positive constant depending on the domain and dimension $C_2=C_2(\Omega,N)$ such that 
		$$ \Vert D^j \sqrt{a} \Vert_{\mathrm{L}^p(\Omega)}\leq C_2 \Vert D^m \sqrt{a} \Vert_{\mathrm{L}^r(\Omega)}^{\alpha} \Vert \sqrt{a} \Vert_{\mathrm{L}^q(\Omega)}^{1-\alpha}+\Tilde{C_2}(\Omega,N,r,q)\Vert \sqrt{a} \Vert_{L^q(\Omega)},$$ 
		where 
		\[
		 1 \leq p,q,r \leq +\infty \ \mbox{and} \ 
		\frac{1}{p}=\frac{j}{N}+\left(\frac{1}{r}-\frac{m}{N}\right)\alpha\ + \frac{1-\alpha}{q} \ \ \mbox{with} \ \ \frac{j}{N}\leq \alpha \leq 1.
		\]
		For dimension $N=2$, we choose $r=2,m=1,j=0,q=2$ and  $\alpha=\frac{1}{2}$. This particular choice yields the following
		\[ 
		\Vert  \sqrt{a} \Vert_{\mathrm{L}^4(\Omega)}^4\leq 2^3 C_2^4 \Vert \nabla \sqrt{a} \Vert_{\mathrm{L}^2(\Omega)}^{2} \Vert \sqrt{a} \Vert_{\mathrm{L}^2(\Omega)}^2+2^3 \Tilde{C_2}^4\Vert \sqrt{a} \Vert_{\mathrm{L}^2(\Omega)}^4.
		\]
		Just like in proposition \ref{N=3 main}, the following estimates hold
		\[
		\int_{\tau}^{\tau+2} \Vert \sqrt{a} \Vert_{W^{1,2}(\Omega)}^2  \leq K_1,\ \
		\int_{\tau}^{\tau+2} \Vert \sqrt{c} \Vert_{W^{1,2}(\Omega)}^2  \leq K_1, \quad \forall \tau\geq 0,
		\]
		where  $K_1=\frac{2E(a_0,b_0,c_0)}{d_a}+\frac{2E(a_0,b_0,c_0)}{d_c}+2M_1\vert\Omega\vert +2M_2\vert\Omega\vert$.
		We consider the relation we obtained from Gagliardo-Nirenberg inequality. Integrating that relation from time  $\tau$ to $\tau+2$ and thanks to our previous estimates, we conclude the following
		\[ 
		\int_{\tau}^{\tau+2}\Vert  \sqrt{a} \Vert_{\mathrm{L}^4(\Omega)}^4 \leq 2^4 C_2^4 M_1^2 \vert \Omega \vert^2 K_1+2^4 \Tilde{C_2}^4 M_1^4,
		\]
		\[
		 \int_{\tau}^{\tau+2}\Vert  \sqrt{c} \Vert_{\mathrm{L}^4(\Omega)}^4 \leq 2^4C_2^4 M_1^2 \vert \Omega \vert^2 K_1+2^4\Tilde{C_2}^4 M_1^4.
		 \]
		Let's choose $K_2= (2^4 C_2^4 M_1^2 \vert \Omega \vert^2 K_1+2^4 \Tilde{C_2}^4 M_1^4)^{\frac{1}{2}}$. we arrive at the following estimates  $\forall \tau \geq 0$.
		\[ 
		\Vert a \Vert_{\mathrm{L}^2(\Omega_{\tau,\tau+2})} \leq K_2, \qquad \Vert c \Vert_{\mathrm{L}^2(\Omega_{\tau,\tau+2})} \leq K_2.
		 \]
		Applying H\"older inequality we arrive at the following estimates 
		\begin{gather}
			\Vert a \Vert_{\mathrm{L}^{\frac{5}{3}}(\Omega_{\tau,\tau+2})} \leq 2K_2\vert \Omega \vert^{\frac{1}{6}}, \qquad \Vert c \Vert_{\mathrm{L}^{\frac{5}{3}}(\Omega_{\tau,\tau+2})} \leq 2K_2 \vert \Omega \vert^{\frac{1}{6}} \quad \forall \tau\geq 0. 
		\end{gather}
		Rest of the proof is similar to the proof of proposition \ref{N=3 main}. We obtain a time independent constant $\hat{K} > 0$  such that for all $t \geq 0$
		$$
		\Vert b \Vert_{\mathrm{L}^{\frac{3}{2}}(\Omega)} \leq \hat{K} (1+t)^{\frac{5}{6}}.
		$$
		For dimension $N=1$, choose $r=2,m=1,j=0,q=2$ and  $\alpha=\frac{1}{3}$. Arguing similarly like in the case of dimension $N=2$, we obtain  time independent constants $\hat{K} >0,K_c>0$ and $\mu_c \in \mathbb{N}$  such that for all  $\tau \geq 0$
		\begin{align*}
			\Vert b \Vert_{\mathrm{L}^{\frac{3}{2}}(\Omega)}&\leq \hat{K}(1+\tau)^{\frac{5}{6}} \qquad \  \tau\geq 0,
			\\
			\Vert a \Vert_{\mathrm{L}^{3.5}(\Omega_{\tau,\tau+1})} &\leq K_c(1+\tau)^{\mu_c} \qquad \tau\geq 0,
			\\
			\Vert c \Vert_{\mathrm{L}^{3.5}(\Omega_{\tau,\tau+1})} &\leq K_c(1+\tau)^{\mu_c} \qquad \tau\geq 0.
		\end{align*}
\end{proof}

	Above Proposition \ref{L^p with time} and Proposition \ref{N=3 main} will help us to relate entropy dissipation with the missing term  $\delta_{B}$. Furthermore the integral estimates of $c$ will provide us polynomial growth of the solutions too. We begin by the polynomial growth of the solutions in the next theorem.

	\begin{Lem}\label{L^infty}
		Let $(a,b,c)$ be the solution of degenerate system \eqref{eq:model 1}. Furthermore, let for $p> \frac{N+2}{2}$, there exists a time independent constant $K_c>0$ and $\mu_c\in\mathbb{N}$, such that
		\[
		\Vert a \Vert_{\mathrm{L}^{p}(\Omega_{\tau,\tau+1})}+\Vert c \Vert_{\mathrm{L}^{p}(\Omega_{\tau,\tau+1})}\leq K_c(1+\tau)^{\mu_c} \quad \forall \tau\geq0
		\]
		Then there exists constant $K_{\infty}$ and $\mu \in \mathbb{N}$, such that
		\begin{equation*}
			\left \{
			\begin{aligned}
				\Vert a \Vert_{\mathrm{L}^{\infty}(\Omega_{t})} \leq & K_{\infty}(1+t)^{\mu} \qquad \forall t\geq 0\\
				\Vert b \Vert_{\mathrm{L}^{\infty}(\Omega_{t})} \leq & K_{\infty}(1+t)^{\mu} \qquad \forall t\geq 0\\
				\Vert c \Vert_{\mathrm{L}^{\infty}(\Omega_{t})} \leq & K_{\infty}(1+t)^{\mu} \qquad \forall t\geq 0.
			\end{aligned}
			\right.
		\end{equation*}
	\end{Lem}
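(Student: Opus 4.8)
The plan is to run a three step bootstrap argument, treating the three species in the order $a$, then $c$, and finally $b$. The engine throughout is the $\mathrm{L}^p$-to-$\mathrm{L}^\infty$ smoothing property of the Neumann heat semigroup: since the hypothesis places the relevant sources in $\mathrm{L}^p$ with $p>\frac{N+2}{2}$, which is precisely the parabolic Sobolev threshold, a single application of Young's convolution inequality against the Gaussian bound $g_d$ (exactly as in Lemma \ref{L^p with time}) upgrades an $\mathrm{L}^p$ source into an $\mathrm{L}^\infty$ bound on the solution. Indeed, taking the convolution exponent $z=p'$, the condition $p>\frac{N+2}{2}$ is equivalent to $p'<1+\frac{2}{N}$, so that $\Vert g_d\Vert_{\mathrm{L}^{p'}(\Omega_{0,2})}<\infty$, and Young's inequality with $\frac{1}{p'}+\frac{1}{p}=1$ delivers the $\mathrm{L}^\infty$ control. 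Localisation in time is carried out with the same cut-off function $\phi_\tau$ used in the earlier lemmas, so that every estimate is performed on cylinders of unit height and the polynomial-in-time factor $K_c(1+\tau)^{\mu_c}$ is transported cleanly through the argument.

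\emph{Step 1 ($a$ in $\mathrm{L}^\infty$).} Since $a,b\ge0$, the concentration $a$ is a subsolution: $\partial_t a-d_a\Delta a=c-ab\le c$. Multiplying by $\phi_\tau$ (which equals $1$ on $[\tau+1,\tau+2]$ and vanishes at $\tau$) and representing through the Neumann Green's function $G_{d_a}$, the initial term drops out and the source becomes $a\,\partial_t\phi_\tau+\phi_\tau(c-ab)\le Ma+c$, which lies in $\mathrm{L}^p(\Omega_{\tau,\tau+2})$ with norm at most $C(1+\tau)^{\mu_c}$ by hypothesis. The Gaussian bound $G_{d_a}\le g_{d_a}$ together with Young's convolution inequality then gives $\Vert\phi_\tau a\Vert_{\mathrm{L}^\infty(\Omega_{\tau,\tau+2})}\le C(1+\tau)^{\mu_c}$, whence $\Vert a\Vert_{\mathrm{L}^\infty(\Omega_{\tau+1,\tau+2})}\le C(1+\tau)^{\mu_c}$. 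The first cylinder $\Omega_{0,1}$ is treated directly from $a(t,x)\le\int_\Omega G_{d_a}(t,0,x,y)a_0(y)\,{\rm d}y+\int_0^t\int_\Omega G_{d_a}(t,s,x,y)c(s,y)\,{\rm d}y\,{\rm d}s$, using $\int_\Omega G_{d_a}(t,0,x,y)\,{\rm d}y\le1$ and $a_0\in\mathrm{L}^\infty(\Omega)$. Patching the unit cylinders yields $\Vert a\Vert_{\mathrm{L}^\infty(\Omega_t)}\le K_\infty(1+t)^{\mu}$ for suitable $K_\infty>0$ and $\mu\in\mathbb{N}$.

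\emph{Step 2 ($c$ in $\mathrm{L}^\infty$).} The source of the $c$-equation is the nonlinearity $ab$, and this is the crux. First I control $b$ in $\mathrm{L}^p$: from $\partial_t b\le c$ we have $b(t,\cdot)\le b_0+\int_0^t c(s,\cdot)\,{\rm d}s$, so Minkowski's integral inequality followed by Hölder in time gives $\Vert b(t,\cdot)\Vert_{\mathrm{L}^p(\Omega)}\le\Vert b_0\Vert_{\mathrm{L}^p(\Omega)}+\int_0^t\Vert c(s,\cdot)\Vert_{\mathrm{L}^p(\Omega)}\,{\rm d}s\le C(1+t)^{\mu_c+1}$, where the space-time hypothesis on $c$, summed over unit cylinders, is used. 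Since $a\in\mathrm{L}^\infty$ by Step 1, Hölder's inequality yields $\Vert ab\Vert_{\mathrm{L}^p(\Omega_{\tau,\tau+2})}\le\Vert a\Vert_{\mathrm{L}^\infty(\Omega_{\tau,\tau+2})}\,\Vert b\Vert_{\mathrm{L}^p(\Omega_{\tau,\tau+2})}\le C(1+\tau)^{m}$ for some integer $m$. As $c\ge0$ makes $c$ a subsolution of $\partial_t c-d_c\Delta c\le ab$, the cut-off/Green's-function/Young argument of Step 1, now with source $Mc+ab\in\mathrm{L}^p(\Omega_{\tau,\tau+2})$, produces $\Vert c\Vert_{\mathrm{L}^\infty(\Omega_{\tau+1,\tau+2})}\le C(1+\tau)^{m}$, the first cylinder being handled from the data as before. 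This gives the polynomial-in-time $\mathrm{L}^\infty$ bound for $c$.

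\emph{Step 3 ($b$ in $\mathrm{L}^\infty$) and the obstacle.} With $c\in\mathrm{L}^\infty$ from Step 2, the pointwise inequality $b(t,x)\le b_0(x)+\int_0^t c(s,x)\,{\rm d}s\le\Vert b_0\Vert_{\mathrm{L}^\infty(\Omega)}+\int_0^t\Vert c(s,\cdot)\Vert_{\mathrm{L}^\infty(\Omega)}\,{\rm d}s$ at once yields $\Vert b\Vert_{\mathrm{L}^\infty(\Omega_t)}\le C(1+t)^{m+1}$. Enlarging the constant $K_\infty$ and the exponent $\mu$ so as to cover all three species completes the proof. I expect the main difficulty to be the nonlinear coupling in the $c$-equation: because $b$ does not diffuse there is no smoothing available for it directly, so $b$ must be estimated last and only in $\mathrm{L}^p$ (rather than $\mathrm{L}^\infty$) while the source $ab$ is being treated; it is precisely the gain $a\in\mathrm{L}^\infty$ from Step 1 that lets $ab$ inherit the integrability $\mathrm{L}^p$ with $p>\frac{N+2}{2}$ required to close the bootstrap. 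The remaining work is the routine bookkeeping of the polynomial exponents through the three steps.
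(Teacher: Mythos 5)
Your proposal is correct and follows essentially the same route as the paper: the identical bootstrap order ($a$ in $\mathrm{L}^{\infty}$ via the subsolution inequality $\partial_t a - d_a\Delta a \le c$, then $b$ in $\mathrm{L}^{p}$ from $\partial_t b\le c$ summed over unit cylinders, then $ab$ in $\mathrm{L}^{p}$, then $c$ in $\mathrm{L}^{\infty}$, and finally $b$ in $\mathrm{L}^{\infty}$), with the same cut-off localisation $\phi_\tau$ on cylinders of unit height. The only cosmetic difference is that you run the Gaussian-bound/Young-convolution argument on every cylinder, whereas the paper uses it only on the initial cylinder and invokes the integrability estimate of Theorem \ref{estimation 1} for the later ones; the two tools are interchangeable here, and your exponent bookkeeping ($p'<1+\tfrac{2}{N}$ being equivalent to $p>\tfrac{N+2}{2}$) is exactly right.
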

	
	\vspace{.2cm}
	
	\begin{proof}
		The species $a$ is a positive subsolution of the following equation in the time interval $(0,2)$.
		\begin{equation}\label{eq:tilde a}
			\left \{
			\begin{aligned}
				\partial_{t}a(t,x)-d_a\Delta a(t,x)
				\leq &c  \qquad \qquad \qquad  \mbox{ in }\Omega_{0,2}
				\\
				\nabla a(t,x)\cdot n=&  0 \qquad   \qquad \quad   \ \  \mbox{ on } \partial\Omega_{0,2} 
				\\
				a(0,x) =& a_0 \qquad   \quad  \qquad \ \ \mbox{ in } \Omega.
			\end{aligned}
			\right.
		\end{equation}
		The solution corresponding to the equation \eqref{eq:tilde a} can be expressed as:
		\[
		a\leq \int_{\Omega}G_{d_a}(t,0,x,y)a_0(y) \rm{d}y+ \int_{0}^{t}\int_{\Omega} G_{d_a}(t,s,x,y)c(s,y)\rm{d}y\rm{d}s.
		\]
		where $G_{d_a}$ denotes the Green's function associated with the operator $\partial_t-d_a\Delta$ with Neumann boundary condition. We use the fact that $\displaystyle{a_0(y)\in\mathrm{L}^{\infty}(\Omega)}$ and $\displaystyle{\int_{\Omega}G_{d_a}(t,0,x,y) \rm{d}y \leq 1}$ for all $t\in(0,2)$. It yields
		\[
		\left\vert \int_{\Omega}G_{d_a}(t,0,x,y)a_0(y) \rm{d}y \right\vert \leq \Vert a_0\Vert_{\mathrm{L}^{\infty}(\Omega)}.
		\]
		We use the following Green's function estimate is from \cite{Morra83}\cite{ML15}: there exists a constant $K_1>0$, depending only on the domain, such that
		\[
		0\leq G_{d_a}(t_1,s,x,y) \leq \frac{K_1}{(4\pi( t_1-s))^\frac{N}{2}}e^{-\kappa \frac{\Vert x-y \Vert^2}{(t_1-s)}} =: g_{d_a}(t_1-s,x-y),
		\]
		for some constant $\kappa>0$ depending only on $\Omega$ and the diffusion coefficient $d$. Note that
		\[
		\left\Vert g_d\right\Vert_{\mathrm{L}^{z}(\Omega_{0,2})} \leq K_2 \qquad \forall z\in\left[1,1+\frac{N}{2}\right)
		\]
		for some constant positive $K_2$ depending on $z$. We in particular choose $\displaystyle{\frac{1}{z}=1-\frac{1}{p}}$ which is admissible because $\displaystyle{p>\frac{N+2}{2}}$. Hence 
		\[
		\left \vert \int_{0}^{t}\int_{\Omega} G_{d_a}(t,s,x,y)c(s,y)\rm{d}y\rm{d}s\right\vert \leq 23^{\mu_c}K_2K_c.
		\]
		It further yields the pointwise bound for the species $a$ in the unit parabolic cylinder in initial time. More precisely, we have
		\begin{align}\label{new label: Linfty a}
		\Vert a\Vert_{\mathrm{L}^{\infty}(\Omega_{0,2})}\leq  \Vert a_0\Vert_{\mathrm{L}^{\infty}(\Omega)} +23^{\mu_c}K_2K_c.
		\end{align}
Next we consider the differential relation $\partial_t b\leq c$. It yields
\begin{align}\label{new label: b-c}
b\leq b_0+\int_{0}^{t}c.
\end{align}
Employing Minkowski's integral inequality on the above estimate we obtain
\[
\Vert b \Vert_{\mathrm{L}^{p}(\Omega_{0,2})}\leq 2^{\frac{1}{p}} \left(\Vert b_0 \Vert_{\mathrm{L}^{\infty}(\Omega)}\vert \Omega\vert^{\frac{1}{p}}+ 3^{\mu_c}2^{\frac{2p-1}{p}}K_c\right).
\]
Using the estimate \eqref{new label: Linfty a}, we derive the following estimate
\[
\Vert ab \Vert_{\mathrm{L}^{p}(\Omega_{0,2})}\leq 2^{\frac{1}{p}}\left(\Vert a_0\Vert_{\mathrm{L}^{\infty}(\Omega)} +2K_2K_c\right) \left(\Vert b_0 \Vert_{\mathrm{L}^{\infty}(\Omega)}\vert \Omega\vert^{\frac{1}{p}}+ 3^{\mu_c}2^{\frac{2p-1}{p}}K_c\right)=:K_{b,[0,1]}.
\]
Similar to the arguments for the species $a$, the above estimates yields the following pointwise estimate for the species $c$
\[
\Vert c\Vert_{\mathrm{L}^{\infty}(\Omega_{0,2})}\leq  \Vert c_0\Vert_{\mathrm{L}^{\infty}(\Omega)} +3^{\mu_c}K_2K_{b,[0,1]}=:K_{c,[0,1]}.
\]
The pointwise estimate of the species $b$ in unit time is a consequence of the above estimate. The relation \eqref{new label: b-c} and the non-negativity of the species $b$, yields
\[
\Vert b\Vert_{\mathrm{L}^{\infty}(\Omega_{0,2})}\leq \Vert b_0\Vert_{\mathrm{L}^{\infty}(\Omega)}+2K_{c,[0,1]}.
\]
Consider the following constant $\displaystyle{K_{\infty,[0,2]}:=\max \left \{ \Vert a_0\Vert_{\mathrm{L}^{\infty}(\Omega)} +23^{\mu_c}K_2K_c, K_{c,[0,1]}, \Vert b_0\Vert_{\mathrm{L}^{\infty}(\Omega)}+2K_{c,[0,1]}\right\}}$. This helps us deduce
\[
\Vert a\Vert_{\mathrm{L}^{\infty}(\Omega_{0,2})}, \Vert b\Vert_{\mathrm{L}^{\infty}(\Omega_{0,2})}, \Vert c\Vert_{\mathrm{L}^{\infty}(\Omega_{0,2})} \leq K_{\infty,[0,2]}.
\]

		Let $\phi:[0,\infty)\rightarrow [0,1]$ be a smooth function such that $\phi(0)=0$, $\phi\big\vert_{[1,\infty)}=1 \text{\ and \ } \phi'\in[0,M_{\phi}]$ for some positive constant $M_{\phi}$. Let's denote $\phi_{\tau}(s)=\phi(s-\tau)$. The product function $\phi_{\tau}a$ satisfies the following initial-boundary value problem:
		
		\begin{equation} \nonumber
			\left\{
			\begin{aligned}
				\partial_t \phi_{\tau}a-d_a \Delta \phi_{\tau}a = & \phi'_{\tau}a+\phi_{\tau}( c-ab) \qquad \mbox{ in } \Omega_{\tau,\tau+2}\\
				\nabla \phi_{\tau}a .n=& 0 \qquad \qquad  \  \ \qquad \qquad\ \mbox { on } \partial\Omega_{\tau,\tau+2}\\
				\phi_{\tau}a(\tau,x)=& 0 \qquad \qquad \qquad \  \qquad  \ \  \mbox{ in }\Omega.
			\end{aligned}
			\right.
		\end{equation}
		
		Applying integrability estimation (see Theorem \ref{estimation 1} from the Appendix) on the above equation, we obtain
		\begin{align}
			\Vert \phi_{\tau}a \Vert_{\mathrm{L}^{\infty}(\Omega_{\tau,\tau+2})} \leq& C_{IE}(\Omega,N,p)\Vert \phi'_{\tau}a+\phi_{\tau} c \Vert_{\mathrm{L}^{p}(\Omega_{\tau,\tau+1})}\nonumber\\
			\Vert a \Vert_{\mathrm{L}^{\infty}(\Omega_{\tau+1,\tau+2})}\leq & C_{IE}(\Omega,N,p)(1+M_{\phi})K_c(1+\tau)^{\mu_c}. \label{kappu 1}
		\end{align}
		
		Next we consider the differential relation $\partial_t b\leq c$. It yields
		\[
			b\leq b_0+\int_{0}^{t}c.
		\]
		Employing Minkowski's integral inequality on the above estimate we obtain
		\[
		\Vert b \Vert_{\mathrm{L}^{p}(\Omega_{0,\tau})}\leq (1+\tau)^{\frac{1}{p}} \left(\Vert b_0 \Vert_{\mathrm{L}^{\infty}(\Omega)}\vert \Omega\vert^{\frac{1}{p}}+ (1+\tau)^{1+\mu_c}K_c\right).
		\]
		We derive the following integral estimate of $ab$
		\[
		\Vert ab \Vert_{\mathrm{L}^{p}(\Omega_{\tau,\tau+1})} \leq C_{IE}(\Omega,N,p)(1+M_{\phi})K_c(1+\tau)^{\mu_c}+(1+\tau)^{\frac{1}{p}} \left(\Vert b_0 \Vert_{\mathrm{L}^{\infty}(\Omega)}\vert \Omega\vert^{\frac{1}{p}}+ (1+\tau)^{1+\mu_c}K_c\right)
		\]
		where $p>\frac{N+2}{2}$. Let us denote the following positive constant 
		$
		\displaystyle{K_3:=C_{IE}(\Omega,N,p)(1+M_{\phi})K_c+\left(\Vert b_0 \Vert_{\mathrm{L}^{\infty}(\Omega)}\vert \Omega\vert^{\frac{1}{p}}+ K_c\right)}
		$.
		 It yields
		\[
		\Vert ab \Vert_{\mathrm{L}^{p}(\Omega_{\tau,\tau+1})}\leq K_3(1+t)^{2+\mu_c} \qquad \mbox{where} \ p>\frac{N+2}{2}.
		\]
		Considering the same smooth cutoff function $\phi$
		
		\begin{equation} \nonumber
			\left\{
			\begin{aligned}
				\partial_t \phi_{\tau}c-d_c \Delta \phi_{\tau}c = & \phi'_{\tau}c+\phi_{\tau}( ab-c) \qquad \mbox{ in } \Omega_{\tau,\tau+2}\\
				\nabla \phi_{\tau}c. n=& 0 \qquad \qquad  \  \ \qquad \qquad\ \mbox { on } \partial\Omega_{\tau,\tau+2}\\
				\phi_{\tau}c(\tau,x)=& 0 \qquad \qquad \qquad \  \qquad  \ \  \mbox{ in }\Omega
			\end{aligned}
			\right.
		\end{equation}
		
		Integrability estimation(see Theorem \ref{estimation 1} from Appendix) on the above equation yields
		\begin{align}
			\Vert \phi_{\tau}c \Vert_{\mathrm{L}^{\infty}(\Omega_{\tau,\tau+2})} \leq& C_{IE}(\Omega,N,p)\Vert \phi'_{\tau}c+\phi_{\tau} ab \Vert_{\mathrm{L}^{p}(\Omega_{\tau,\tau+1})}\nonumber\\
			\Vert c \Vert_{\mathrm{L}^{\infty}(\Omega_{\tau+1,\tau+2})}\leq & C_{IE}(\Omega,N,p)(1+M_{\phi})(K_c+K_3) (1+\tau)^{2+\mu_c}. \label{kappu 2}
		\end{align}
		
	The pointwise estimate of the species $b$ in unit time is a consequence of the above estimate. The relation \eqref{new label: b-c} and the non-negativity of the species $b$, yields
		\begin{align*}
		\Vert b \Vert_{\mathrm{L}^{\infty}(\Omega_{\tau})} \leq & (1+\tau)\Vert b_0 \Vert_{\mathrm{L}^{\infty}(\Omega)}+\Vert c \Vert_{\mathrm{L}^{\infty}(\Omega_{\tau,\tau+1})}(1+\tau)\\
		\leq & (1+\tau)\Vert b_0 \Vert_{\mathrm{L}^{\infty}(\Omega)}+ \bigg(C_{IE}(\Omega,N,p)+K_{\infty,[0,2]}\bigg)(1+M_{\phi})(K_c+K_3) (1+\tau)^{3+\mu_c}.
		\end{align*}
		
		By choosing the constant 
		\begin{equation*}
		K_{\infty}:=\max
		\left\{
		\begin{aligned}
		&K_{\infty,[0,2]}, C_{IE}(\Omega,N,p)(1+M_{\phi})K_c, C_{IE}(\Omega,N,p)(1+M_{\phi})(K_c+K_3),\\
		&\Vert b_0 \Vert_{\mathrm{L}^{\infty}(\Omega)}+ \bigg(C_{IE}(\Omega,N,p)+K_{\infty,[0,2]}\bigg)(1+M_{\phi})(K_c+K_3) 
		\end{aligned}
	\right \}
\end{equation*}
and $\mu=3+\mu_c$, we arrive at our result.
		\end{proof}
	
\vspace{.2cm}		
		
		Note for the dimension $N\geq 4$, if $d_a,d_c$ satisfying closeness condition \eqref{closeness condition imp} for $p>N\geq \frac{N+2}{2}$, the assumption in the Theorem \ref{L^infty} is automatically satisfied (Proposition-\ref{L^p with time}), however the assumption also holds true for $N=1,2,3$ (Proposition \ref{N=3 main} and Lemma \ref{2.0.3}) regardless of any smallness condition on the non-zero diffusion coefficients.

	\vspace{.3cm}

	Next we show the integral estimate of $b$, as in Lemma \ref{N/2 estimate} and Proposition-\ref{N=3 main}, helps us to relate the missing term  $\delta_{B}$  with the entropy dissipation functional. The relation is described in the following proposition:

\begin{Prop}\label{main relatio}
Let $N\geq 4$ and let $(a,b,c)$ be the solution to the degenrate system \eqref{eq:model 1}. Let the nonzero diffusion coefficients $d_a,d_c$ satisfy the closeness condition \eqref{closeness condition imp}. Then the entropy dissipation $D(a,b,c)$ satisfies
\begin{align}\label{eq:main-relatio}
D(a,b,c) \ge \mathcal{K}\, (1+t)^{-\frac{N-2}{N-1}} \left( \left\Vert \delta_A \right\Vert^2_{\mathrm L^2(\Omega)} + \left\Vert \delta_B \right\Vert^2_{\mathrm L^2(\Omega)} + \left\Vert \delta_C \right\Vert^2_{\mathrm L^2(\Omega)} \right)
\qquad
\mbox{ for }t\ge0,
\end{align}
where the positive constant $\mathcal{K}$ depends only on the dimension N, the domain $\Omega$, the constants $M_1$ and $M_2$ in the mass conservation properties \eqref{mass conserve 1}-\eqref{mass conserve 2} and the nonzero diffusion coefficients $d_a$ and $d_c$.
\end{Prop}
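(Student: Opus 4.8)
The plan is to start from the dissipation lower bounds already recorded before the statement. For $N\ge4$, the Sobolev-improved Poincar\'e bound stated in the introduction gives $\Vert\delta_A\Vert_{\mathrm L^{2N/(N-2)}(\Omega)}^2 \le \frac{P(\Omega)}{4d_a}D(a,b,c)$, while the $\mathrm L^2$ bound \eqref{new label d_a,d_c L2} gives $\Vert\delta_C\Vert_{\mathrm L^2(\Omega)}^2 \le \frac{P(\Omega)}{4d_c}D(a,b,c)$ and $\Vert AB-C\Vert_{\mathrm L^2(\Omega)}^2 \le \tfrac14 D(a,b,c)$. Since the first and third of these already control $\delta_A$ and $\delta_C$ by $D$, and since the weight satisfies $(1+t)^{-\frac{N-2}{N-1}}\le 1$, the $\delta_A$ and $\delta_C$ contributions to the right-hand side of \eqref{eq:main-relatio} are immediate; the entire difficulty lies in the missing term $\Vert\delta_B\Vert_{\mathrm L^2(\Omega)}^2$.

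To produce $\delta_B$ I would exploit the reaction term through the pointwise identity $\overline A\, B = C + (AB-C) - \delta_A B$, which follows by writing $AB = \overline A\, B + \delta_A B$. Subtracting its spatial average yields
\[
\overline A\,\delta_B = \delta_C + \big[(AB-C)-\overline{AB-C}\big] - \big[\delta_A B - \overline{\delta_A B}\big].
\]
Taking $\mathrm L^2(\Omega)$ norms and using $\Vert f-\overline f\Vert_{\mathrm L^2(\Omega)}\le \Vert f\Vert_{\mathrm L^2(\Omega)}$ on the last two brackets gives $\overline A \,\Vert\delta_B\Vert_{\mathrm L^2(\Omega)} \le \Vert\delta_C\Vert_{\mathrm L^2(\Omega)} + \Vert AB-C\Vert_{\mathrm L^2(\Omega)} + \Vert\delta_A B\Vert_{\mathrm L^2(\Omega)}$. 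The genuinely new ingredient is the cross term $\Vert\delta_A B\Vert_{\mathrm L^2(\Omega)}^2 = \int_\Omega \delta_A^2\, b$, which I would estimate by H\"older's inequality with the conjugate exponents $\tfrac{N}{N-2}$ and $\tfrac N2$, obtaining $\Vert\delta_A B\Vert_{\mathrm L^2(\Omega)}^2 \le \Vert\delta_A\Vert_{\mathrm L^{2N/(N-2)}(\Omega)}^2\, \Vert b\Vert_{\mathrm L^{N/2}(\Omega)}$. Here the first factor is controlled by $D$ via the improved dissipation bound, and the second factor is exactly what Lemma \ref{N/2 estimate} bounds by $K_3(1+t)^{\frac{N-2}{N-1}}$, so $\Vert\delta_A B\Vert_{\mathrm L^2(\Omega)}^2 \le \frac{P(\Omega)K_3}{4d_a}(1+t)^{\frac{N-2}{N-1}}D(a,b,c)$.

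Feeding this, together with the $\mathrm L^2$ bounds on $\delta_C$ and $AB-C$, into the squared version of the displayed inequality (using $(x+y+z)^2\le 3(x^2+y^2+z^2)$ and $(1+t)^{\frac{N-2}{N-1}}\ge1$) produces $\overline A^2 \,\Vert\delta_B\Vert_{\mathrm L^2(\Omega)}^2 \le C'(1+t)^{\frac{N-2}{N-1}}D(a,b,c)$ for some $C'>0$ independent of time. Rearranging gives a lower bound for $D$ against $\Vert\delta_B\Vert^2$; distributing $D$ into three equal parts and combining with the trivially reweighted $\delta_A,\delta_C$ bounds then yields \eqref{eq:main-relatio} with $\mathcal K$ a suitable minimum of $\tfrac{4d_a}{3P(\Omega)}$, $\tfrac{4d_c}{3P(\Omega)}$ and $\tfrac{\overline A^2}{3C'}$.

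The main obstacle is precisely the factor $\overline A^2$: the argument only delivers a constant proportional to $\inf_{t\ge0}\overline A(t)^2$, so I must secure a uniform-in-time positive lower bound $\overline A=\overline{\sqrt a}\ge \underline A>0$ depending only on $M_1,M_2$ and $\Omega$, so that $\mathcal K$ has the asserted dependence. Note that the polynomial $\mathrm L^\infty$ growth of $a$ from Lemma \ref{L^infty} is of no help here, since $\overline{\sqrt a}\ge \overline a/\Vert a\Vert_{\mathrm L^\infty(\Omega)}^{1/2}$ would inject a growing factor and destroy the exponent; one genuinely needs a direct lower bound on the average $\overline{\sqrt a}$, i.e.\ a non-extinction estimate for the species $\mathcal X_1$. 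I would extract this from the evolution of the spatial averages $\overline a,\overline b,\overline c$ together with the conservation identities \eqref{mass conserve 1}-\eqref{mass conserve 2}, which force $\overline a = M_1-\overline c$ with $\overline c\le\min\{M_1,M_2\}$ (already giving $\overline a\ge M_1-M_2$ when $M_1>M_2$), supplemented in the remaining regime by the dissipative structure to rule out $\overline a\to0$. This is the one place where a dedicated a priori estimate, rather than a direct appeal to the earlier lemmas, is required.
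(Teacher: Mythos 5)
Your reduction of the missing $\delta_B$ term is sound as far as it goes: the identity $\overline A\,\delta_B = \delta_C + \bigl[(AB-C)-\overline{AB-C}\bigr] - \bigl[\delta_A B-\overline{\delta_A B}\bigr]$, the H\"older step $\int_\Omega \delta_A^2\, b \le \Vert\delta_A\Vert^2_{\mathrm L^{2N/(N-2)}(\Omega)}\Vert b\Vert_{\mathrm L^{N/2}(\Omega)}$, and the appeal to Lemma \ref{N/2 estimate} are exactly the ingredients the paper also uses, and they do yield $\overline A^2\Vert\delta_B\Vert^2_{\mathrm L^2(\Omega)} \le C'(1+t)^{\frac{N-2}{N-1}}D(a,b,c)$. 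The genuine gap is the step you yourself flag: the uniform-in-time lower bound $\inf_{t\ge0}\overline A(t) > 0$ with a constant depending only on $M_1$, $M_2$, $\Omega$. This is not proven in your proposal, and the sketch you give cannot close it. First, mass conservation only gives $\overline a \ge M_1 - M_2$ when $M_1 > M_2$; for $M_1 \le M_2$ you offer nothing beyond an appeal to ``the dissipative structure'', which is not an argument. Second, and more fundamentally, a lower bound on $\overline a$ does not imply one on $\overline A = \overline{\sqrt a}$: since $\overline a = \overline A^2 + |\Omega|^{-1}\Vert\delta_A\Vert^2_{\mathrm L^2(\Omega)}$, the average $\overline{\sqrt a}$ can be made arbitrarily small by concentration while $\overline a$ stays fixed (e.g.\ $a = n^2M_1\chi_{E_n}$ with $|E_n|=|\Omega|/n^2$ has $\overline a = M_1$ but $\overline{\sqrt a}=\sqrt{M_1}/n$). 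So what you need is a genuine non-extinction estimate for the species $\mathcal X_1$, a dynamical statement of essentially the same difficulty as the convergence one is trying to prove; it does not follow from the conservation identities \eqref{mass conserve 1}--\eqref{mass conserve 2}.

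The paper's proof is structured precisely so that no lower bound on $\overline A$ is ever needed, and this is the real content of the argument. It fixes $\varepsilon = \frac{M_1}{2}\frac{|\Omega|}{|\Omega|+2+2M_2|\Omega|}$ and splits into cases. If $\max\{\Vert\delta_A\Vert^2_{\mathrm L^2(\Omega)},\Vert\delta_C\Vert^2_{\mathrm L^2(\Omega)}\} > \varepsilon$, then \eqref{new label d_a,d_c L2} already bounds $D$ below by a fixed constant, which dominates $\Vert\delta_B\Vert^2_{\mathrm L^2(\Omega)}$ because $\Vert\delta_B\Vert^2_{\mathrm L^2(\Omega)}\le M_2|\Omega|$ by \eqref{mass conserve 2}. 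Otherwise both deviations are at most $\varepsilon$, and one splits again on $\overline A$: if $\overline A \ge \sqrt\varepsilon$, your computation goes through with $\overline A^2\ge\varepsilon$; if $\overline A < \sqrt\varepsilon$, then smallness of $\overline A$ \emph{together with} smallness of $\Vert\delta_A\Vert^2_{\mathrm L^2(\Omega)}$ forces $\overline{A^2}\le\varepsilon(1+1/|\Omega|)$, so by \eqref{mass conserve 1} the mass $M_1$ must sit in $c$, giving $\overline C^2 \ge M_1 - \varepsilon(1+2/|\Omega|)$, and hence $\Vert\overline A B - \overline C\Vert^2_{\mathrm L^2(\Omega)}\ge \frac{M_1|\Omega|}{4}$, which again dominates $\Vert\delta_B\Vert^2_{\mathrm L^2(\Omega)}\le M_2|\Omega|$. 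In short, in the only regime where your constant $\overline A^2$ degenerates, the paper shows that the reaction term $\Vert\overline A B-\overline C\Vert^2_{\mathrm L^2(\Omega)}$, not $\overline A^2$, supplies the coercivity. To repair your proposal you must either prove the non-extinction bound (which the paper never has, and which is delicate) or insert this case analysis, at which point you recover the paper's proof.
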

\begin{proof}
We rewrite the entropy dissipation functional as
\begin{align}\label{eq:dissip-rewrite}
D(a,b,c) = 4d_a \int_{\Omega}\left\vert \nabla A \right\vert^2\, {\rm d}x + 4d_c \int_{\Omega}\left\vert \nabla C \right\vert^2\, {\rm d}x + \int_{\Omega} (ab-c)\ln\left(\frac{ab}{c}\right)\, {\rm d}x.
\end{align}
We recall an algebraic identity which says that for all $p,q\ge0$, there holds $(p-q)\left(\ln p-\ln q\right) \geq 4\left(\sqrt{p}-\sqrt{q}\right)^2$. Using this algebraic identity in the last term and employing the Poincar\'e-Wirtinger inequality (see Theorem \ref{Poincare-Wirtinger} from the Appendix for the precise statement) for the first two terms of the above dissipation functional yields
\begin{align}\label{eq:dissip-bound-1}
D(a,b,c) \geq \frac{4d_a}{P(\Omega)} \left\Vert \delta_A \right\Vert^2_{\mathrm L^{\frac{2N}{N-2}}(\Omega)} + \frac{4d_c}{P(\Omega)} \left\Vert \delta_C \right\Vert^2_{\mathrm L^{\frac{2N}{N-2}}(\Omega)} + 4 \left\Vert AB - C \right\Vert^2_{\mathrm L^2(\Omega)},
\end{align}
where $P(\Omega)=C\left(\Omega,\frac{2N}{N-2}\right)$ is the Poincar\'e constant. From the above inequality, it follows that
\begin{align}\label{eq:dissip-bound-2}
D(a,b,c) \geq \frac{4d_a}{P(\Omega)} \left\Vert \delta_A \right\Vert^2_{\mathrm L^{\frac{2N}{N-2}}(\Omega)} + \frac{4d_c}{P(\Omega)} \left\Vert \delta_C \right\Vert^2_{\mathrm L^{\frac{2N}{N-2}}(\Omega)} + \eta \left\Vert AB - C \right\Vert^2_{\mathrm L^2(\Omega)},
\end{align}
for any $0\le \eta\le 4$. As a consequence of the H\"older inequality, the inequality \eqref{eq:dissip-bound-1} leads to
\[
D(a,b,c) \geq \frac{4d_a\left\vert\Omega\right\vert^{-\frac{2}{N}}}{P(\Omega)} \left\Vert \delta_A \right\Vert^2_{\mathrm L^2(\Omega)} + \frac{4d_c\left\vert\Omega\right\vert^{-\frac{2}{N}}}{P(\Omega)} \left\Vert \delta_C \right\Vert^2_{\mathrm L^2(\Omega)} + 4 \left\Vert AB - C \right\Vert^2_{\mathrm L^2(\Omega)}.
\]
It is apparent from the above lower bound that a term involving $\left\Vert \delta_B\right\Vert^2_{\mathrm L^2(\Omega)}$ is missing. To arrive at a lower bound involving this missing term, we fix an arbitrary constant $\varepsilon>0$ (to be chosen later) and distinguish two cases: a case corresponding to 
\begin{align}\label{eq:case-bounded above}
\max\left\{ \left\Vert \delta_A \right\Vert^2_{\mathrm L^2(\Omega)} , \left\Vert \delta_C \right\Vert^2_{\mathrm L^2(\Omega)} \right\} \le \varepsilon
\end{align}
and another case corresponding to
\begin{align}\label{eq:case- NOT bounded above}
\max\left\{ \left\Vert \delta_A \right\Vert^2_{\mathrm L^2(\Omega)} , \left\Vert \delta_C \right\Vert^2_{\mathrm L^2(\Omega)} \right\} > \varepsilon.
\end{align}
We first treat the case corresponding to \eqref{eq:case-bounded above}. Observe that
\[
\left\Vert AB - C \right\Vert^2_{\mathrm L^2(\Omega)} 
= \left\Vert \left(\delta_A + \overline{A}\right)B - \left(\delta_C + \overline{C}\right) \right\Vert^2_{\mathrm L^2(\Omega)}
= \left\Vert \left( \, \overline{A}B - \overline{C}\right) + \left( B \delta_A -\delta_C\right) \right\Vert^2_{\mathrm L^2(\Omega)}
\]
We recall an algebraic identity which says that for all $p,q\in\mathbb{R}$, there holds $(p-q)^2\ge \frac{p^2}{2} - q^2$. Using this algebraic identity in the above equality, we obtain
\begin{align*}
\left\Vert AB - C \right\Vert^2_{\mathrm L^2(\Omega)} & \ge \frac12 \left\Vert \overline{A}B - \overline{C} \right\Vert^2_{\mathrm L^2(\Omega)} - \left\Vert B \delta_A -\delta_C \right\Vert^2_{\mathrm L^2(\Omega)}
\\
& \ge \frac12 \left\Vert \overline{A}B - \overline{C} \right\Vert^2_{\mathrm L^2(\Omega)} - 2 \left\Vert B \delta_A \right\Vert^2_{\mathrm L^2(\Omega)} - 2 \left\Vert \delta_C \right\Vert^2_{\mathrm L^2(\Omega)},
\end{align*}
thanks to the algebraic identity $(p-q)^2\le 2p^2+2q^2$ for $p,q\in\mathbb{R}$. Employing the H\"older inequality in the second and third terms of the lower bound in the above inequality results in
\[
\left\Vert AB - C \right\Vert^2_{\mathrm L^2(\Omega)} \ge \frac12 \left\Vert \overline{A}B - \overline{C} \right\Vert^2_{\mathrm L^2(\Omega)} - 2 \left\Vert b \right\Vert_{\mathrm L^\frac{N}{2}(\Omega)} \left\Vert \delta_A \right\Vert^2_{\mathrm L^\frac{2N}{N-2}(\Omega)} - 2 \left\vert\Omega\right\vert^\frac{2}{N} \left\Vert \delta_C \right\Vert^2_{\mathrm L^\frac{2N}{N-2}(\Omega)}.
\]
Next, using the key integrability estimate on $\left\Vert b(t,\cdot)\right\Vert_{\mathrm L^\frac{N}{2}(\Omega)}$ from Lemma \ref{N/2 estimate} yields
\begin{align}\label{eq:dissip-bound-3}
\left\Vert AB - C \right\Vert^2_{\mathrm L^2(\Omega)} \ge \frac12 \left\Vert \overline{A}B - \overline{C} \right\Vert^2_{\mathrm L^2(\Omega)} - 2 K_3\, (1+t)^{\frac{N-2}{N-1}} \left\Vert \delta_A \right\Vert^2_{\mathrm L^\frac{2N}{N-2}(\Omega)} - 2 \left\vert\Omega\right\vert^\frac{2}{N} \left\Vert \delta_C \right\Vert^2_{\mathrm L^\frac{2N}{N-2}(\Omega)}.
\end{align}
In order to relate the above lower bound to $\left\Vert\delta_B\right\Vert_{\mathrm L^2(\Omega)}$, we further analyse the following term:
\[
\left\Vert \overline{A}B - \overline{C} \right\Vert^2_{\mathrm L^2(\Omega)}.
\]
Note that if $\overline{A}\ge \sqrt{\varepsilon}$, then 
\begin{align}\label{eq:dissip-bound-4}
\left\Vert \overline{A}B - \overline{C} \right\Vert^2_{\mathrm L^2(\Omega)} \ge \varepsilon \left\Vert B - \overline{B} \right\Vert^2_{\mathrm L^2(\Omega)}.
\end{align}
To see this, factorising $\overline{A}B$ as $\overline{C}(1+\mu(x))$, we get
\[
\left\Vert \overline{A}B - \overline{C} \right\Vert^2_{\mathrm L^2(\Omega)} = \overline{C}^2 \overline{\mu^2} \left\vert \Omega\right\vert
\quad
\mbox{ and }
\quad
\left\Vert B - \overline{B} \right\Vert^2_{\mathrm L^2(\Omega)} = \frac{\overline{C}^2}{\overline{A}^2} \left\Vert \mu - \overline{\mu} \right\Vert^2_{\mathrm L^2(\Omega)} 
\le \frac{\overline{C}^2}{\overline{A}^2}\overline{\mu^2}\left\vert\Omega\right\vert
\le \frac{\overline{C}^2\overline{\mu^2}\left\vert\Omega\right\vert}{\varepsilon}.
\]
On the other hand, let us consider the case when $\overline{A} < \sqrt{\varepsilon}$. Note that
\[
\left\Vert \delta_A\right\Vert^2_{\mathrm L^2(\Omega)} = \left\vert\Omega\right\vert \left( \overline{A^2} - \overline{A}^2 \right)
\implies \overline{A^2} \le \varepsilon \left(1 + \frac{1}{\left\vert\Omega\right\vert}\right),
\]
where we have also used the fact that we are dealing with the case $\left\Vert \delta_A \right\Vert^2_{\mathrm L^2(\Omega)}\le \varepsilon$. Observe that
\begin{align*}
\overline{C}^2 = \overline{C^2} - \frac{1}{\left\vert\Omega\right\vert} \left\Vert \delta_C\right\Vert^2_{\mathrm L^2(\Omega)}
& = \overline{C^2} + \overline{A^2} - \overline{A^2} - \frac{1}{\left\vert\Omega\right\vert} \left\Vert \delta_C\right\Vert^2_{\mathrm L^2(\Omega)}
\\
& \ge M_1 - \varepsilon \left( 1 + \frac{2}{\left\vert\Omega\right\vert}\right),
\end{align*}
thanks to the mass conservation property \eqref{mass conserve 1}, the bound on $\overline{A^2}$ from above and the fact that $\left\Vert \delta_C \right\Vert^2_{\mathrm L^2(\Omega)}\le \varepsilon$. Now, using the algebraic identity $(p-q)^2\ge \frac{p^2}{2} - q^2$, we arrive at
\[
\left\Vert \overline{A}B - \overline{C} \right\Vert^2_{\mathrm L^2(\Omega)} = \frac{\left\vert\Omega\right\vert}{2} \left( \overline{C}^2 - 2 \overline{A}^2 \overline{B^2}\right)
\ge \frac{\left\vert\Omega\right\vert}{2} \left( M_1 - \varepsilon \left( 1 + \frac{2}{\left\vert\Omega\right\vert}\right) - 2 \varepsilon M_2 \right)
\]
where we have used the aforementioned lower bound for $\overline{C}^2$, the mass conservation property \eqref{mass conserve 2} and that $\overline{A} < \sqrt{\varepsilon}$. Let us now choose
\begin{align}\label{eq:epsilon-choice}
\varepsilon := \frac{M_1}{2} \frac{\left\vert\Omega\right\vert}{\left\vert\Omega\right\vert + 2 + 2 M_2 \left\vert\Omega\right\vert}.
\end{align}
With the above choice of $\varepsilon$, we obtain
\begin{align}\label{eq:dissip-bound-5}
\left\Vert \overline{A}B - \overline{C} \right\Vert^2_{\mathrm L^2(\Omega)} \ge \frac{M_1\left\vert\Omega\right\vert}{4} \ge \frac{M_1}{4M_2} \left\Vert B - \overline{B}\right\Vert^2_{\mathrm L^2(\Omega)},
\end{align}
where we have used the observation \eqref{eq:delta-B-bound-1} from earlier. Using \eqref{eq:dissip-bound-4} and \eqref{eq:dissip-bound-5} in \eqref{eq:dissip-bound-3} and \eqref{eq:dissip-bound-2} helps us deduce that
\begin{align*}
D(a,b,c) 
& \geq \left( \frac{4d_a}{P(\Omega)} - 2\, \eta\, K_3\, (1+t)^{\frac{N-2}{N-1}}\right)\left\Vert \delta_A \right\Vert^2_{\mathrm L^{\frac{2N}{N-2}}(\Omega)} 
+ \left( \frac{4d_c}{P(\Omega)} - 2\, \eta \left\vert\Omega\right\vert^\frac{2}{N}\right) \left\Vert \delta_C \right\Vert^2_{\mathrm L^{\frac{2N}{N-2}}(\Omega)}
+ \eta \varepsilon \left\Vert \delta_B \right\Vert^2_{\mathrm L^2(\Omega)}
\end{align*}
for the case $\overline{A}\ge \sqrt{\varepsilon}$ and that 
\begin{align*}
D(a,b,c) 
& \geq \left( \frac{4d_a}{P(\Omega)} - 2\, \eta\, K_3\, (1+t)^{\frac{N-2}{N-1}}\right)\left\Vert \delta_A \right\Vert^2_{\mathrm L^{\frac{2N}{N-2}}(\Omega)} 
+ \left( \frac{4d_c}{P(\Omega)} - 2\, \eta \left\vert\Omega\right\vert^\frac{2}{N}\right) \left\Vert \delta_C \right\Vert^2_{\mathrm L^{\frac{2N}{N-2}}(\Omega)}
+ \eta \frac{M_1}{8M_2} \left\Vert \delta_B \right\Vert^2_{\mathrm L^2(\Omega)}
\end{align*}
for the case $\overline{A}< \sqrt{\varepsilon}$. Next, observe that by taking
\[
\eta(t) := \left( \frac{2\min\{d_a,d_c,2\}}{P(\Omega) \left(K_3 + \left\vert\Omega\right\vert^\frac{2}{N}\right) + 1} \right) (1+t)^{-\frac{N-2}{N-1}},
\]
we obtain 
\[
D(a,b,c) \ge \min\left\{ \frac{M_1\left\vert\Omega\right\vert}{2\left\vert\Omega\right\vert + 4 + 4 M_2 \left\vert\Omega\right\vert}, \frac{M_1}{8M_2} \right\} \eta(t) \left\Vert \delta_B \right\Vert^2_{\mathrm L^2(\Omega)}.
\]
Observe that the above choice of $\eta$ does satisfy $0\le \eta\le4$. 
Hence we deduce in the case corresponding to \eqref{eq:case-bounded above} that
\begin{align*}
D(a,b,c) & = \frac12 D(a,b,c) + \frac12 D(a,b,c)
\\
& \ge \frac{2d_a\left\vert\Omega\right\vert^{-\frac{2}{N}}}{P(\Omega)} \left\Vert \delta_A \right\Vert^2_{\mathrm L^2(\Omega)} 
+ \frac{2d_c\left\vert\Omega\right\vert^{-\frac{2}{N}}}{P(\Omega)} \left\Vert \delta_C \right\Vert^2_{\mathrm L^2(\Omega)}
+ \frac12 \min\left\{ \frac{M_1\left\vert\Omega\right\vert}{2\left\vert\Omega\right\vert + 4 + 4 M_2 \left\vert\Omega\right\vert}, \frac{M_1}{8M_2} \right\} \eta(t) \left\Vert \delta_B \right\Vert^2_{\mathrm L^2(\Omega)}
\\
& \ge \mathcal{K}_1 (1+t)^{-\frac{N-2}{N-1}} \left( \left\Vert \delta_A \right\Vert^2_{\mathrm L^2(\Omega)} + \left\Vert \delta_B \right\Vert^2_{\mathrm L^2(\Omega)} + \left\Vert \delta_C \right\Vert^2_{\mathrm L^2(\Omega)} \right)
\end{align*}
for all $t\ge0$, where
\[
\mathcal{K}_1 := \min\left\{ \frac{2d_a\left\vert\Omega\right\vert^{-\frac{2}{N}}}{P(\Omega)}, \frac{2d_c\left\vert\Omega\right\vert^{-\frac{2}{N}}}{P(\Omega)}, \min\left\{ \frac{M_1\left\vert\Omega\right\vert}{2\left\vert\Omega\right\vert + 4 - 4 M_2 \left\vert\Omega\right\vert}, \frac{M_1}{8M_2} \right\} \left( \frac{\min\{d_a,d_c,2\}}{P(\Omega) \left(K_3 + \left\vert\Omega\right\vert^\frac{2}{N}\right)+1} \right)\right\}.
\]
The case corresponding to \eqref{eq:case- NOT bounded above} is relatively simpler. Observe that	
\begin{align*}
D(a,b,c) & \ge \frac{4\left\vert\Omega\right\vert^{-\frac{2}{N}}}{P(\Omega)} \min\left\{d_a,d_c\right\} \max\left\{ \left\Vert \delta_A \right\Vert^2_{\mathrm L^2(\Omega)} , \left\Vert \delta_C \right\Vert^2_{\mathrm L^2(\Omega)} \right\}
\\
& > \frac{4\varepsilon\left\vert\Omega\right\vert^{-\frac{2}{N}}}{P(\Omega)}\min\left\{d_a,d_c\right\}
\geq \frac{4\varepsilon\left\vert\Omega\right\vert^{-\frac{2}{N}}}{P(\Omega)M_2\left\vert \Omega\right\vert}\min\left\{d_a,d_c\right\} \left\Vert B - \overline{B} \right\Vert^2_{\mathrm L^2(\Omega)},
\end{align*}
where the final inequality is thanks to the following observation:
\begin{align}\label{eq:delta-B-bound-1}
\left\Vert B - \overline{B} \right\Vert^2_{\mathrm L^2(\Omega)} = \left\Vert B\right\Vert^2_{\mathrm L^2(\Omega)} - \overline{B} \left\vert \Omega\right\vert \le \left\Vert B\right\Vert^2_{\mathrm L^2(\Omega)} = \left\Vert b\right\Vert_{\mathrm L^1(\Omega)} \le M_2 \left\vert \Omega\right\vert,
\end{align}
which is a consequence of the mass conservation property \eqref{mass conserve 2}. Hence we deduce in this case that
\begin{align*}
D(a,b,c) & = \frac12 D(a,b,c) + \frac12 D(a,b,c)
\\
& \ge \frac{2d_a\left\vert\Omega\right\vert^{-\frac{2}{N}}}{P(\Omega)} \left\Vert \delta_A \right\Vert^2_{\mathrm L^2(\Omega)} 
+ \frac{2d_c\left\vert\Omega\right\vert^{-\frac{2}{N}}}{P(\Omega)} \left\Vert \delta_C \right\Vert^2_{\mathrm L^2(\Omega)}
+ \frac{2\varepsilon\left\vert\Omega\right\vert^{-\frac{2}{N}}\min\left\{d_a,d_c\right\}}{P(\Omega)M_2\left\vert \Omega\right\vert} \left\Vert \delta_B \right\Vert^2_{\mathrm L^2(\Omega)}
\\
& \ge \mathcal{K}_2 (1+t)^{-\frac{N-2}{N-1}} \left( \left\Vert \delta_A \right\Vert^2_{\mathrm L^2(\Omega)} + \left\Vert \delta_B \right\Vert^2_{\mathrm L^2(\Omega)} + \left\Vert \delta_C \right\Vert^2_{\mathrm L^2(\Omega)} \right)
\end{align*}
for all $t\ge0$, where
\[
\mathcal{K}_2 := \min\left\{ \frac{2d_a\left\vert\Omega\right\vert^{-\frac{2}{N}}}{P(\Omega)}, \frac{2d_c\left\vert\Omega\right\vert^{-\frac{2}{N}}}{P(\Omega)}, \frac{2\varepsilon\left\vert\Omega\right\vert^{-\frac{2}{N}}\min\left\{d_a,d_c\right\}}{P(\Omega)M_2\left\vert \Omega\right\vert}\right\}.
\]
Taking $\mathcal{K} := \min\{\mathcal{K}_1, \mathcal{K}_2\}$ yields the desired result.
\end{proof}
It should be noted that a result similar to that of Proposition \ref{main relatio} can be found when the dimension $N<4$. As in the proof of the above proposition, employing the Poincar\'e-Wirtinger inequality in the expression \eqref{eq:dissip-rewrite} for the entropy dissipation for the case of $N<4$ yields
\[
D(a,b,c) \geq \frac{4d_a}{P(\Omega)} \left\Vert \delta_A \right\Vert^2_{\mathrm L^6(\Omega)} + \frac{4d_c}{P(\Omega)} \left\Vert \delta_C \right\Vert^2_{\mathrm L^6(\Omega)} + 4 \left\Vert AB - C \right\Vert^2_{\mathrm L^2(\Omega)}.
\]
Arguing exactly as in the proof of Proposition \ref{main relatio} and exploiting the bound
\[
\left\Vert b(t,\cdot)\right\Vert_{\mathrm L^\frac32(\Omega)} \le \hat{K}(1+t)^{\frac{5}{6}} \qquad \mbox{ for } \, t\geq 0,
\]
obtained in Proposition \ref{N=3 main} and Lemma \ref{2.0.3} helps us prove the following result. To avoid the repeat of arguments, we skip its proof. The key point to be noted, however, is that this result is unconditional in the sense that the nonzero diffusion coefficients are not assumed to satisfy the closeness condition \eqref{closeness condition imp}.
\begin{Prop}\label{2.0.2}
Let $N< 4$ and let $(a,b,c)$ be the solution to the degenrate system \eqref{eq:model 1}. The entropy dissipation $D(a,b,c)$ satisfies
\[
D(a,b,c) \ge \mathcal{S}\, (1+t)^{-\frac56} \left( \left\Vert \delta_A \right\Vert^2_{\mathrm L^2(\Omega)} + \left\Vert \delta_B \right\Vert^2_{\mathrm L^2(\Omega)} + \left\Vert \delta_C \right\Vert^2_{\mathrm L^2(\Omega)} \right)
\qquad
\mbox{ for }t\ge0,
\]
where the positive constant $\mathcal{S}$ depends only on the dimension N, the domain $\Omega$, the constants $M_1$ and $M_2$ in the mass conservation properties \eqref{mass conserve 1}-\eqref{mass conserve 2} and the nonzero diffusion coefficients $d_a$ and $d_c$.
\end{Prop}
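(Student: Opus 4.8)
The plan is to reproduce the proof of Proposition~\ref{main relatio} almost verbatim, with the Sobolev exponent $\frac{2N}{N-2}$ replaced everywhere by $6$ and the $\mathrm L^{N/2}$-integrability of $b$ replaced by the $\mathrm L^{3/2}$-integrability $\Vert b(t,\cdot)\Vert_{\mathrm L^{3/2}(\Omega)}\le\hat K(1+t)^{5/6}$ established in Proposition~\ref{N=3 main} and Lemma~\ref{2.0.3}. First I would start from the lower bound
\[
D(a,b,c)\ge\frac{4d_a}{P(\Omega)}\left\Vert\delta_A\right\Vert^2_{\mathrm L^6(\Omega)}+\frac{4d_c}{P(\Omega)}\left\Vert\delta_C\right\Vert^2_{\mathrm L^6(\Omega)}+4\left\Vert AB-C\right\Vert^2_{\mathrm L^2(\Omega)}
\]
recorded just above the statement, and split $D=\frac12 D+\frac12 D$. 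In the first half I would pass to $\mathrm L^2$-norms of $\delta_A$ and $\delta_C$ by the H\"older inequality on the bounded domain; in the second half I would introduce a time-dependent parameter $\eta(t)\in[0,4]$ in front of $\Vert AB-C\Vert^2_{\mathrm L^2(\Omega)}$, whose role is to manufacture the missing $\Vert\delta_B\Vert^2_{\mathrm L^2(\Omega)}$ term.

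The core of the argument is the case distinction, with a constant $\varepsilon>0$ to be fixed as in \eqref{eq:epsilon-choice}, between $\max\{\Vert\delta_A\Vert^2_{\mathrm L^2(\Omega)},\Vert\delta_C\Vert^2_{\mathrm L^2(\Omega)}\}\le\varepsilon$ and its negation. In the bounded case I would write $AB-C=(\overline A B-\overline C)+(B\delta_A-\delta_C)$ and use the elementary inequalities $(p-q)^2\ge\frac12 p^2-q^2$ and $(p+q)^2\le 2p^2+2q^2$ to obtain
\[
\left\Vert AB-C\right\Vert^2_{\mathrm L^2(\Omega)}\ge\frac12\left\Vert\overline A B-\overline C\right\Vert^2_{\mathrm L^2(\Omega)}-2\left\Vert B\delta_A\right\Vert^2_{\mathrm L^2(\Omega)}-2\left\Vert\delta_C\right\Vert^2_{\mathrm L^2(\Omega)}.
\]
The decisive estimate is then
\[
\left\Vert B\delta_A\right\Vert^2_{\mathrm L^2(\Omega)}=\int_\Omega b\,\delta_A^2\,{\rm d}x\le\Vert b\Vert_{\mathrm L^{3/2}(\Omega)}\left\Vert\delta_A\right\Vert^2_{\mathrm L^6(\Omega)},
\]
valid because the H\"older exponents $\frac32$ and $3$ are conjugate; this is exactly why, in dimension $N\le3$, the $\mathrm L^{3/2}$-bound on $b$ and the $\mathrm L^6$-gain from the Poincar\'e--Wirtinger inequality are the correctly matched pair. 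Inserting $\Vert b(t,\cdot)\Vert_{\mathrm L^{3/2}(\Omega)}\le\hat K(1+t)^{5/6}$ puts a factor $(1+t)^{5/6}$ in front of $\Vert\delta_A\Vert^2_{\mathrm L^6(\Omega)}$, which is absorbed by the coefficient $\frac{4d_a}{P(\Omega)}$ precisely when $\eta(t)$ carries the compensating decay $(1+t)^{-5/6}$.

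To convert $\Vert\overline A B-\overline C\Vert^2_{\mathrm L^2(\Omega)}$ into $\Vert\delta_B\Vert^2_{\mathrm L^2(\Omega)}$ I would split once more: if $\overline A\ge\sqrt\varepsilon$, the factorisation $\overline A B=\overline C(1+\mu)$ gives $\Vert\overline A B-\overline C\Vert^2_{\mathrm L^2(\Omega)}\ge\varepsilon\Vert B-\overline B\Vert^2_{\mathrm L^2(\Omega)}$, whereas if $\overline A<\sqrt\varepsilon$ the mass-conservation relations \eqref{mass conserve 1}--\eqref{mass conserve 2} keep $\overline C^2$ bounded below, yielding a constant lower bound that dominates $\frac{M_1}{4M_2}\Vert\delta_B\Vert^2_{\mathrm L^2(\Omega)}$ once $\varepsilon$ is fixed by \eqref{eq:epsilon-choice}. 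The complementary case $\max\{\Vert\delta_A\Vert^2_{\mathrm L^2(\Omega)},\Vert\delta_C\Vert^2_{\mathrm L^2(\Omega)}\}>\varepsilon$ is easier: the retained gradient terms already exceed a fixed multiple of $\varepsilon$, while $\Vert\delta_B\Vert^2_{\mathrm L^2(\Omega)}\le\Vert b\Vert_{\mathrm L^1(\Omega)}\le M_2|\Omega|$ by \eqref{eq:delta-B-bound-1}, so a crude bound suffices. Choosing $\eta(t)\propto(1+t)^{-5/6}$ and collecting the constants from all cases yields the claim with $\mathcal S=\min\{\mathcal S_1,\mathcal S_2\}$. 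The only genuinely delicate point, and the one I would watch most carefully, is the bookkeeping of the dimension-dependent exponents so that the H\"older pairing $\frac23+\frac13=1$ closes and the time weight $(1+t)^{5/6}$ is cancelled exactly by $\eta(t)$; apart from this the computation is a verbatim repetition of the $N\ge4$ argument of Proposition~\ref{main relatio} with $6$ in place of $\frac{2N}{N-2}$.
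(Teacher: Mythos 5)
Your proposal is correct and is essentially the paper's own argument: the paper in fact omits the proof of Proposition \ref{2.0.2}, stating that it follows by arguing exactly as in Proposition \ref{main relatio} with the exponent $\frac{2N}{N-2}$ replaced by $6$ and the $\mathrm L^{N/2}$ bound on $b$ replaced by the $\mathrm L^{3/2}$ bound $\Vert b(t,\cdot)\Vert_{\mathrm L^{3/2}(\Omega)}\le \hat K (1+t)^{5/6}$ from Proposition \ref{N=3 main} and Lemma \ref{2.0.3}. Your identification of the conjugate H\"older pairing $\frac32,\,3$ and the compensating choice $\eta(t)\propto(1+t)^{-5/6}$ is precisely what that repetition requires.
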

Next, we derive a sub-exponential decay estimate for the relative entropy.
\begin{Prop}\label{decay 1}
Let $N\geq4$ and let $(a,b,c)$ be the solution to the degenerate system \eqref{eq:model 1}. Let $(a_\infty,b_\infty,c_\infty)$ be the associated equilibrium state given by \eqref{eq:equi-state-1}-\eqref{eq:equi-state-2}. Let the nonzero diffusion coefficients $d_a,d_c$ satisfy the closeness condition \eqref{closeness condition imp}. Then, for any given positive $\varepsilon \ll1$, there exists a time $T_{\varepsilon}$ and two positive constants $\mathcal{S}_1$ and $\mathcal{S}_2$ such that
\begin{align}\label{eq:sub-exp-decay-relative-entropy}
E(a,b,c)- E(a_{\infty},b_{\infty},c_{\infty})
\leq \mathcal{S}_1 \, e^{-\mathcal{S}_2(1+t)^{\frac{1-\epsilon}{N-1}}} \qquad \mbox{ for }\, t\ge T_\varepsilon.
\end{align}
\end{Prop}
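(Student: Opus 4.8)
The plan is to close an entropy--entropy dissipation estimate by a Gr\"onwall inequality with a time-dependent, polynomially decaying rate. Write $\mathcal{E}(t) := E(a,b,c) - E(a_\infty,b_\infty,c_\infty)$ for the relative entropy; by the computation recorded in the introduction it satisfies $\mathcal{E}'(t) = -D(a,b,c)$, so $\mathcal{E}$ is non-increasing. The objective is to establish a reverse inequality of the form $D(a,b,c) \ge K\,(1+t)^{-\alpha}\,\mathcal{E}(t)$ for some constant $K>0$ and some exponent $\alpha<1$, and then integrate the resulting differential inequality.

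The first step is to bound $\mathcal{E}$ from above by the entropy dissipation. I would chain together the three ingredients already assembled before the statement: the growth bound for the relative entropy coming from Lemma \ref{L^infty}, namely $\mathcal{E}(t) \le C_1 (1+t)^{\varepsilon}\big(\|A-A_\infty\|^2_{\mathrm L^2(\Omega)} + \|B-B_\infty\|^2_{\mathrm L^2(\Omega)} + \|C-C_\infty\|^2_{\mathrm L^2(\Omega)}\big)$ valid for $t\ge T_\varepsilon$; the elementary triangle-inequality decomposition controlling $\|A-A_\infty\|^2_{\mathrm L^2(\Omega)}+\cdots$ by $6$ times the deviations $\|\delta_A\|^2_{\mathrm L^2(\Omega)}+\|\delta_B\|^2_{\mathrm L^2(\Omega)}+\|\delta_C\|^2_{\mathrm L^2(\Omega)}$ plus $6$ times the averaged-equilibrium terms; and the bound \eqref{ED} borrowed from \cite{DF06}, which controls those averaged-equilibrium terms by $C_{EB}$ times the deviations plus $\|C-AB\|^2_{\mathrm L^2(\Omega)}$. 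Combining these yields $\mathcal{E}(t) \le C_1(1+t)^{\varepsilon}\big[6(1+C_{EB})(\|\delta_A\|^2_{\mathrm L^2(\Omega)}+\|\delta_B\|^2_{\mathrm L^2(\Omega)}+\|\delta_C\|^2_{\mathrm L^2(\Omega)}) + 6C_{EB}\|C-AB\|^2_{\mathrm L^2(\Omega)}\big]$.

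Next I would convert the right-hand side back into $D$ and integrate. Proposition \ref{main relatio} gives $\|\delta_A\|^2_{\mathrm L^2(\Omega)}+\|\delta_B\|^2_{\mathrm L^2(\Omega)}+\|\delta_C\|^2_{\mathrm L^2(\Omega)} \le \mathcal{K}^{-1}(1+t)^{\frac{N-2}{N-1}}D$, while the lower bound \eqref{new label d_a,d_c L2} gives $\|C-AB\|^2_{\mathrm L^2(\Omega)} \le \tfrac14 D \le \tfrac14(1+t)^{\frac{N-2}{N-1}}D$. Feeding both into the previous display produces $\mathcal{E}(t) \le C_2 (1+t)^{\alpha} D(a,b,c)$ with $\alpha := \varepsilon + \frac{N-2}{N-1}$ and $C_2$ depending only on $C_1,\mathcal{K},C_{EB}$. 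Since $\mathcal{E}'=-D$, this is the differential inequality $\mathcal{E}'(t) \le -C_2^{-1}(1+t)^{-\alpha}\mathcal{E}(t)$ for $t\ge T_\varepsilon$. Dividing by $\mathcal{E}$ (which may be taken positive, else the conclusion is trivial by monotonicity) and using the integrating factor from $T_\varepsilon$ to $t$, I would obtain $\mathcal{E}(t) \le \mathcal{E}(T_\varepsilon)\exp\!\big(\tfrac{C_2^{-1}}{1-\alpha}(1+T_\varepsilon)^{1-\alpha}\big)\exp\!\big(-\tfrac{C_2^{-1}}{1-\alpha}(1+t)^{1-\alpha}\big)$; since $1-\alpha = \tfrac{1}{N-1}-\varepsilon$, this is exactly \eqref{eq:sub-exp-decay-relative-entropy} after absorbing the prefactor into $\mathcal{S}_1$, setting $\mathcal{S}_2 := \tfrac{C_2^{-1}}{1-\alpha}$, and relabelling the arbitrary small parameter (replacing $\varepsilon$ by $\varepsilon/(N-1)$ turns $\tfrac{1}{N-1}-\varepsilon$ into $\tfrac{1-\varepsilon}{N-1}$).

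The main obstacle, and really the only structural point, is ensuring that the exponent $\alpha = \varepsilon + \frac{N-2}{N-1}$ stays strictly below $1$; otherwise the time integral $\int_{T_\varepsilon}^t (1+s)^{-\alpha}\,{\rm d}s$ would fail to grow like a positive power of $(1+t)$ and the stretched-exponential decay would be lost. This holds precisely because the degeneracy rate $\frac{N-2}{N-1}$ in Proposition \ref{main relatio} is strictly less than $1$, leaving a margin $\frac{1}{N-1}$ that the small polynomial growth $(1+t)^\varepsilon$ of the solution (from Lemma \ref{L^infty}) cannot consume once $\varepsilon$ is chosen small enough. One must also keep the whole argument restricted to $t\ge T_\varepsilon$, with $C_1$ and $T_\varepsilon$ exactly those furnished by the $\mathrm L^\infty$ growth estimate, in order to match the statement.
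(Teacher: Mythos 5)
Your proposal is correct and follows essentially the same route as the paper's proof: bound the relative entropy by the deviation terms via the $\mathrm L^\infty$ growth estimate and the functional inequality of \cite{DF06}, convert those terms into the dissipation via Proposition \ref{main relatio} together with the bound $\left\Vert AB-C\right\Vert^2_{\mathrm L^2(\Omega)}\le \tfrac14 D$, and integrate the resulting Gr\"onwall inequality with rate $(1+t)^{-\alpha}$, $\alpha<1$. The only cosmetic differences are that you invoke the introduction's ready-made bound $\mathcal{E}(t)\le C_1(1+t)^{\varepsilon}(\cdots)$ and assemble the \cite{DF06} estimate from the triangle-inequality decomposition plus \eqref{ED}, whereas the paper re-derives the logarithmic bound via the function $\Phi$ and cites \cite[Lemma 3.2]{DF06} directly, with the exponent bookkeeping ($\tfrac{1}{N-1}-\varepsilon$ versus $\tfrac{1-\varepsilon}{N-1}$) reconciled exactly as you indicate.
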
 
\begin{proof}
The relative entropy reads
\begin{align*}
E(a,b,c)-E(a_{\infty},b_{\infty},c_{\infty})
& = \int_{\Omega}\left(a \ln{a}-a-a_{\infty} \ln{a_{\infty}}+a_{\infty}\right)\, {\rm d}x
+\int_{\Omega}\left(b \ln{b}-b-b_{\infty} \ln{b_{\infty}}+b_{\infty}\right)\, {\rm d}x
\\
& \quad + \int_{\Omega}\left(c \ln{c}-c-c_{\infty} \ln{c_{\infty}}+c_{\infty}\right)\, {\rm d}x.
\end{align*}
Using the relation \eqref{eq:equil-relation}, the above expression for the relative entropy becomes
\begin{equation}\label{Entropy Gamma function relation}
\begin{aligned}
E(a,b,c)-E(a_{\infty},b_{\infty},c_{\infty}) & =
\int_{\Omega} \left(a \ln{\frac{a}{a_{\infty}}}-a+a_{\infty}\right)\, {\rm d}x 
+ \int_{\Omega} \left(b \ln{\frac{b}{b_{\infty}}}-b+b_{\infty}\right)\, {\rm d}x
\\
& \quad + \int_{\Omega} \left(c \ln{\frac{c}{c_{\infty}}}-c+c_{\infty}\right)\, {\rm d}x.
\end{aligned}
\end{equation}
Let us define a function $\Phi:(0,\infty)\times(0,\infty)\to\mathbb{R}$ as follows:
\begin{equation}
\Phi(x,y) :=
\left\{
\begin{aligned}
\frac{x \ln\left(\frac{x}{y}\right)-x+y}{\left(\sqrt{x}-\sqrt{y}\right)^2} & \qquad \mbox{ for }x\not=y,
\\
2 & \qquad \mbox{ for }x=y.
\end{aligned}\right.
\end{equation}
It can be shown (see \cite[Lemma 2.1, p.162]{DF06} for details) that the above defined function satisfies the following bound:
\begin{align}\label{eq:Gamma-bound}
\Phi(x,y)\leq C_{\Phi}\max\left\{1,\ln\left(\frac{x}{y}\right)\right\}
\end{align}
for some positive constant $C_\Phi$. Next we rewrite the relative entropy as
\begin{align*}
E(a,b,c)-E(a_{\infty},b_{\infty},c_{\infty}) & =
\int_{\Omega} \Phi(a,a_{\infty})(A-A_{\infty})^2\, {\rm d}x  + \int_{\Omega} \Phi(b,b_{\infty})(B-B_{\infty})^2\, {\rm d}x + \int_{\Omega} \Phi(c,c_{\infty})(C-C_{\infty})^2\, {\rm d}x.
\end{align*}
Note that for any $p\ge\frac12$ and $q>0$, we have
\begin{equation*}
\ln p - \ln q \le \ln (1+\left\vert p\right\vert) + \left\vert \ln q \right\vert
\le 1 + \ln \left\vert p \right\vert + \left\vert \ln q \right\vert
\end{equation*}
and for any $0<p<\frac12$ and $q>0$, we have
\[
\ln p - \ln q \le \left\vert \ln q\right\vert.
\]
This helps us arrive at
\begin{equation}\label{eq:relative-entropy-bd-1}
\begin{aligned}
E(a,b,c) - & E(a_{\infty},b_{\infty},c_{\infty}) 
\\
&\le C_1 \left(1 + \ln(1+t) \right) \left( \left\Vert A - A_\infty\right\Vert^2_{\mathrm L^2(\Omega)} + \left\Vert B - B_\infty\right\Vert^2_{\mathrm L^2(\Omega)} + \left\Vert C - C_\infty\right\Vert^2_{\mathrm L^2(\Omega)} \right)
\end{aligned}
\end{equation}
for all $t\ge0$, where the positive constant $C_1$ is given by
\[
C_1 := C_\Phi\left(1 + \left\vert \ln a_\infty\right\vert + \left\vert \ln b_\infty\right\vert + \left\vert \ln c_\infty\right\vert + \left\vert \ln K_\infty\right\vert + \mu \right).
\]
Here the constants $K_\infty$ and $\mu$ are the ones appearing in the $\mathrm L^\infty(\Omega_t)$ bounds on the concentrations from Lemma \ref{L^infty}. The factor $\left(1 + \ln(1+t) \right)$ in the above estimate of the relative entropy is due to the fact that Lemma \ref{L^infty} says that at least one of the concentrations has a polynomial (in time) bound on its $\mathrm L^\infty$-norm. It should be noted that having an uniform (in time) bound on all of the concentrations gets rid of this time factor. In \cite{DF06}, the authors prove the following bound
\begin{equation}\label{eq:relative-entropy-bd-2}
\begin{aligned}
\left\Vert A - A_\infty\right\Vert^2_{\mathrm L^2(\Omega)} + \left\Vert B - B_\infty\right\Vert^2_{\mathrm L^2(\Omega)} & + \left\Vert C - C_\infty\right\Vert^2_{\mathrm L^2(\Omega)}
\\
& \le C_2 \left( \left\Vert \delta_A \right\Vert^2_{\mathrm L^2(\Omega)} + \left\Vert \delta_B \right\Vert^2_{\mathrm L^2(\Omega)} + \left\Vert \delta_C \right\Vert^2_{\mathrm L^2(\Omega)} + \left\Vert AB - C \right\Vert^2_{\mathrm L^2(\Omega)}\right)
\end{aligned}
\end{equation}
exploiting only the conservation properties \eqref{mass conserve 1}-\eqref{mass conserve 2}. Furthermore, the constant $C_2$ in the above bound depends only on the equilibrium states $A_\infty, B_\infty, C_\infty$ and the constants $M_1,M_2$ from the conservation properties \eqref{mass conserve 1}-\eqref{mass conserve 2} (see \cite[Lemma 3.2, p.168]{DF06} for precise expression for the constant). We thus arrive at the following bound for the relative entropy using \eqref{eq:relative-entropy-bd-1} and \eqref{eq:relative-entropy-bd-2}:
\begin{equation}\label{eq:relative-entropy-bd-3}
\begin{aligned}
E(a,b,c) - & E(a_{\infty},b_{\infty},c_{\infty}) 
\\
&\le C_3 \left(1 + \ln(1+t) \right) \left( \left\Vert \delta_A \right\Vert^2_{\mathrm L^2(\Omega)} + \left\Vert \delta_B \right\Vert^2_{\mathrm L^2(\Omega)} + \left\Vert \delta_C \right\Vert^2_{\mathrm L^2(\Omega)} + \left\Vert AB - C \right\Vert^2_{\mathrm L^2(\Omega)}\right)
\end{aligned}
\end{equation}
where the positive constant $C_3:=C_1C_2$. Hence, thanks to the lower bound \eqref{eq:main-relatio} obtained in Proposition \ref{main relatio} and the lower bound \eqref{eq:dissip-bound-1}, it follows from \eqref{eq:relative-entropy-bd-3} that
\begin{align*}
E(a,b,c) - E(a_{\infty},b_{\infty},c_{\infty}) 
\le C_3 \left(1 + \ln(1+t) \right) \max\left\{\frac{1}{\mathcal{K}}, \frac14 \right\} (1+t)^{\frac{N-2}{N-1}} D(a,b,c).
\end{align*}
Note that for any given positive $\varepsilon \ll1$, there exists a time $T_{\varepsilon}$ such that
\[
\ln(1+t) < (1+t)^\frac{\varepsilon}{N-1} \qquad \mbox{ for all }t\ge T_\varepsilon.
\]
Hence we have
\[
E(a,b,c) - E(a_{\infty},b_{\infty},c_{\infty}) \le C_4 (1+t)^\frac{N-2+\varepsilon}{N-1} D(a,b,c) \qquad \mbox{ for all }t\ge T_\varepsilon,
\]
where the constant $C_4:=C_3\max\left\{\frac{1}{\mathcal{K}}, \frac14 \right\}$. Recall that we have
\[
\frac{{\rm d}}{{\rm d}t} \left( E(a,b,c) - E(a_{\infty},b_{\infty},c_{\infty}) \right) = - D(a,b,c) \qquad \mbox{ for all }t>0.
\]
Thus we have
\[
\frac{{\rm d}}{{\rm d}t} \left( E(a,b,c) - E(a_{\infty},b_{\infty},c_{\infty}) \right) \le - \frac{1}{C_4} (1+t)^{-\frac{N-2+\varepsilon}{N-1}}\left( E(a,b,c) - E(a_{\infty},b_{\infty},c_{\infty}) \right) \qquad \mbox{ for all }t\ge T_\varepsilon.
\]
Integrating the above differential inequality, we obtain 
\begin{align*}
E&(a(t,\cdot),b(t,\cdot),c(t,\cdot)) - E(a_{\infty},b_{\infty},c_{\infty}) 
\\
& \le \left( E(a(T_\varepsilon,\cdot),b(T_\varepsilon,\cdot),c(T_\varepsilon,\cdot)) - E(a_{\infty},b_{\infty},c_{\infty}) \right) e^{-\frac{N-1}{C_4(1-\varepsilon)}(1+T_\varepsilon)^{\frac{1-\varepsilon}{N-1}}} e^{-\frac{N-1}{C_4(1-\varepsilon)}(1+t)^{\frac{1-\varepsilon}{N-1}}}
\\
& \le \left( E(a_0,b_0,c_0) - E(a_{\infty},b_{\infty},c_{\infty}) \right) e^{-\frac{N-1}{C_4(1-\varepsilon)}} e^{-\frac{N-1}{C_4(1-\varepsilon)}(1+t)^{\frac{1-\varepsilon}{N-1}}}.
\end{align*}
We have thus proved the sub-exponential decay \eqref{eq:sub-exp-decay-relative-entropy} of relative entropy with the following explicit constants:
\begin{align*}
\mathcal{S}_1 & = \left( E(a_0,b_0,c_0) - E(a_{\infty},b_{\infty},c_{\infty}) \right) e^{-\frac{N-1}{C_4(1-\varepsilon)}}
\\
\mathcal{S}_2 & = \frac{N-1}{C_4(1-\varepsilon)}.
\end{align*}
\end{proof}
A result similar to that of Proposition \ref{decay 1} can be found when the dimension $N<4$. The proof goes along similar lines and we skip it in the interest of space. The proof banks on the lower bound for the dissipation functional obtained in Proposition \ref{2.0.2}. The key point to be noted, however, is that this result is unconditional in the sense that the nonzero diffusion coefficients are not assumed to satisfy the closeness condition \eqref{closeness condition imp}.
\begin{Prop}\label{decay N=3}
Let $N<4$ and let $(a,b,c)$ be the solution to the degenerate system \eqref{eq:model 1}. Let $(a_\infty,b_\infty,c_\infty)$ be the associated equilibrium state given by \eqref{eq:equi-state-1}-\eqref{eq:equi-state-2}. Then, for any given positive $\varepsilon \ll1$, there exists a time $T_{\varepsilon}$ and two positive constants $\mathcal{S}_3$ and $\mathcal{S}_4$ such that
\[
E(a,b,c)- E(a_{\infty},b_{\infty},c_{\infty})
\leq \mathcal{S}_3 \, e^{-\mathcal{S}_4(1+t)^{\frac{1-\epsilon}{6}}} \qquad \mbox{ for }\, t\ge T_\varepsilon.
\]
\end{Prop}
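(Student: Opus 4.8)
The plan is to follow the proof of Proposition \ref{decay 1}, its $N\ge4$ counterpart, essentially verbatim; the only structural change is to substitute the dissipation lower bound of Proposition \ref{2.0.2} (carrying the exponent $-\tfrac56$) in place of that of Proposition \ref{main relatio} (which carried $-\tfrac{N-2}{N-1}$). First I would rewrite the relative entropy, using the equilibrium relation \eqref{eq:equil-relation}, in the form $E(a,b,c)-E(a_\infty,b_\infty,c_\infty)=\int_\Omega \Phi(a,a_\infty)(A-A_\infty)^2\,{\rm d}x+\int_\Omega \Phi(b,b_\infty)(B-B_\infty)^2\,{\rm d}x+\int_\Omega \Phi(c,c_\infty)(C-C_\infty)^2\,{\rm d}x$, with $\Phi$ the convexity function introduced in the proof of Proposition \ref{decay 1}, and then invoke its bound $\Phi(x,y)\le C_\Phi\max\{1,\ln(x/y)\}$.

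Next I would insert the polynomial-in-time $\mathrm L^\infty$ growth of the concentrations. The crucial point is that Lemma \ref{L^infty} applies unconditionally for $N\le 3$: its hypothesis, namely the $\mathrm L^p$-growth of $a$ and $c$ on unit cylinders for some $p>\tfrac{N+2}{2}$, is supplied by Proposition \ref{N=3 main} when $N=3$ and by Lemma \ref{2.0.3} when $N=1,2$, with no closeness requirement on $d_a,d_c$. This yields, exactly as in \eqref{eq:relative-entropy-bd-1}, a bound $E-E_\infty \le C_1(1+\ln(1+t))\bigl(\Vert A-A_\infty\Vert_{\mathrm L^2(\Omega)}^2+\Vert B-B_\infty\Vert_{\mathrm L^2(\Omega)}^2+\Vert C-C_\infty\Vert_{\mathrm L^2(\Omega)}^2\bigr)$ for all $t\ge 0$. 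Feeding in the entropy--entropy-dissipation comparison \eqref{eq:relative-entropy-bd-2} borrowed from \cite{DF06}, which relies only on the conservation laws \eqref{mass conserve 1}--\eqref{mass conserve 2}, I obtain the analogue of \eqref{eq:relative-entropy-bd-3}, namely $E-E_\infty \le C_3(1+\ln(1+t))\bigl(\Vert \delta_A\Vert_{\mathrm L^2(\Omega)}^2+\Vert \delta_B\Vert_{\mathrm L^2(\Omega)}^2+\Vert \delta_C\Vert_{\mathrm L^2(\Omega)}^2+\Vert AB-C\Vert_{\mathrm L^2(\Omega)}^2\bigr)$.

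The heart of the argument is the comparison with the dissipation. Using Proposition \ref{2.0.2}, $D(a,b,c)\ge \mathcal S\,(1+t)^{-5/6}\bigl(\Vert \delta_A\Vert_{\mathrm L^2(\Omega)}^2+\Vert \delta_B\Vert_{\mathrm L^2(\Omega)}^2+\Vert \delta_C\Vert_{\mathrm L^2(\Omega)}^2\bigr)$, together with the residual bound $D(a,b,c)\ge 4\Vert AB-C\Vert_{\mathrm L^2(\Omega)}^2$ coming from the Poincar\'e step underlying \eqref{eq:dissip-bound-1}, I control the full parenthesis by $(1+t)^{5/6}D(a,b,c)$. Absorbing the slowly-varying factor $1+\ln(1+t)$ into a small power — for any $\varepsilon\ll 1$ there is a $T_\varepsilon$ with $1+\ln(1+t)\le (1+t)^{\varepsilon/6}$ for $t\ge T_\varepsilon$ — I reach $E-E_\infty \le C_4\,(1+t)^{(5+\varepsilon)/6}\,D(a,b,c)$ for $t\ge T_\varepsilon$, where $C_4:=C_3\max\{\tfrac1{\mathcal S},\tfrac14\}$.

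Finally, since $\tfrac{{\rm d}}{{\rm d}t}(E-E_\infty)=-D(a,b,c)$, this becomes the differential inequality $\tfrac{{\rm d}}{{\rm d}t}(E-E_\infty)\le -\tfrac1{C_4}(1+t)^{-(5+\varepsilon)/6}(E-E_\infty)$ on $[T_\varepsilon,\infty)$; integrating and noting that $\int^t (1+s)^{-(5+\varepsilon)/6}\,{\rm d}s$ scales like $\tfrac{6}{1-\varepsilon}(1+t)^{(1-\varepsilon)/6}$ produces the claimed decay with explicit constants $\mathcal S_3$ (proportional to the initial relative entropy $E(a_0,b_0,c_0)-E(a_\infty,b_\infty,c_\infty)$) and $\mathcal S_4=\tfrac{6}{C_4(1-\varepsilon)}$. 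I do not expect a genuine obstacle, since the whole construction transcribes the $N\ge4$ argument; the only point demanding care is the exponent bookkeeping in the Gr\"onwall step, where one must check that replacing $\tfrac{N-2}{N-1}$ by $\tfrac56$ turns the integrated exponent $1-\tfrac{N-2+\varepsilon}{N-1}$ into $\tfrac{1-\varepsilon}{6}$ — which is precisely what the $\mathrm L^{3/2}(\Omega)$ estimate on $b$ from Proposition \ref{N=3 main} and Lemma \ref{2.0.3} was engineered to deliver, and why the result is \emph{unconditional} in dimensions $N\le3$.
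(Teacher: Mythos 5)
Your proposal is correct and is exactly the argument the paper intends: the paper itself omits this proof, stating only that it runs parallel to Proposition \ref{decay 1} with the dissipation lower bound of Proposition \ref{2.0.2} (exponent $-\tfrac56$) replacing that of Proposition \ref{main relatio}, which is precisely your transcription, including the unconditional applicability of Lemma \ref{L^infty} via Proposition \ref{N=3 main} and Lemma \ref{2.0.3}, and the Gr\"onwall bookkeeping turning $(1+t)^{-(5+\varepsilon)/6}$ into the decay exponent $\tfrac{1-\varepsilon}{6}$.
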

We are now equipped to prove our main result of this section.
\begin{proof}[Proof of Theorem \ref{convergence theorem 1}:]
We have already obtained sub-exponential decay (in time) of the relative entropy in Proposition \ref{decay 1} (for dimension $N\ge4$) and in Proposition \ref{decay N=3} (for dimension $N<4$). Hence the sub-exponential decay in the $\mathrm L^1$-norm is a direct consequence of the following Czisz\'ar-Kullback-Pinsker type inequality that relates relative entropy and the $\mathrm L^1$-norm:
\begin{align*}
E(a,b,c) - E(a_{\infty},b_{\infty},c_{\infty}) \geq
\frac{\left(3+2\sqrt{2}\right)\vert \Omega \vert}{2M_{1}\left(9+2\sqrt{2}\right)} & \left\Vert a - a_\infty\right\Vert^2_{\mathrm L^1(\Omega)}
+ \frac{\left(3+2\sqrt{2}\right)\vert \Omega \vert}{2M_{1}\left(9+2\sqrt{2}\right)}\left\Vert b - b_\infty\right\Vert^2_{\mathrm L^1(\Omega)}
\\
& + \frac{\left(3+2\sqrt{2}\right)\vert \Omega \vert}{\left(M_{1}+M_{2}\right)\left(9+2\sqrt{2}\right)}\left\Vert c - c_\infty\right\Vert^2_{\mathrm L^1(\Omega)}.
\end{align*}
The proof of the above functional inequality is available in \cite[Lemma 3.3, p.173]{DF06} which exploits the conservation properties \eqref{mass conserve 1} and \eqref{mass conserve 2}.
\end{proof}

\section{The case of $d_c=0$}
In this section, we shall devote our attention to the study of the degenerate model \eqref{eq:model 11} which corresponds to the vanishing of the diffusion coefficient $d_c$. The existence of a smooth positive solution to \eqref{eq:model 11} was proved in \cite[Theorem 3.2]{DF15} for all smooth initial non-negative data if the dimension $N\le3$. Our main objective of this section is to understand the large time behaviour of solutions to \eqref{eq:model 11}. Here, we choose to present our computations when the dimension $N=3$. Analogous results hold true in dimensions one and two as well. Note that we have the entropy equality
\[
\frac{{\rm d}}{{\rm d}t} \left( E(a,b,c) - E(a_{\infty},b_{\infty},c_{\infty}) \right) = - D(a,b,c) \qquad \mbox{ for all }t>0,
\]
where $(a,b,c)$ is the solution to the degenerate model \eqref{eq:model 11} and $(a_{\infty},b_{\infty},c_{\infty})$ is the corresponding equilibrium state given by \eqref{eq:equi-state-1} and \eqref{eq:equi-state-2}. The dissipation functional $D$ in the above equality is given by
\[
D(a,b,c) = 4d_a \int_{\Omega}\left\vert \nabla A \right\vert^2\, {\rm d}x + 4d_b \int_{\Omega}\left\vert \nabla B \right\vert^2\, {\rm d}x + \int_{\Omega} (ab-c)\ln\left(\frac{ab}{c}\right)\, {\rm d}x.
\]
We arrive at the following straightforward lower bound for the dissipation:
\begin{align}\label{eq:dc-0-D-L6}
D(a,b,c) \geq \frac{4d_a}{P(\Omega)} \left\Vert \delta_A \right\Vert^2_{\mathrm L^6(\Omega)} + \frac{4d_b}{P(\Omega)} \left\Vert \delta_B \right\Vert^2_{\mathrm L^6(\Omega)} + 4 \left\Vert AB - C \right\Vert^2_{\mathrm L^2(\Omega)},
\end{align}
thanks to the Poincar\'e-Wirtinger inequality and an algebraic identity which says that for all $p,q\ge0$, there holds $(p-q)\left(\ln p-\ln q\right) \geq 4\left(\sqrt{p}-\sqrt{q}\right)^2$. An application of the H\"older inequality leads to
\begin{align}\label{eq:dc-0-D-L2}
D(a,b,c) \geq \frac{4d_a\left\vert\Omega\right\vert^{-\frac{2}{3}}}{P(\Omega)} \left\Vert \delta_A \right\Vert^2_{\mathrm L^2(\Omega)} + \frac{4d_b\left\vert\Omega\right\vert^{-\frac{2}{3}}}{P(\Omega)} \left\Vert \delta_B \right\Vert^2_{\mathrm L^2(\Omega)} + 4 \left\Vert AB - C \right\Vert^2_{\mathrm L^2(\Omega)}.
\end{align}
It is apparent from the above inequality that the term involving $\left\Vert \delta_C \right\Vert^2_{\mathrm L^2(\Omega)}$ is missing from its lower bound. A similar scenario was handled in the previous section while dealing with the missing $\left\Vert \delta_B \right\Vert^2_{\mathrm L^2(\Omega)}$ term. Our strategy was to derive polynomial (in time) bounds on the supremum norms of the concentrations. Here too, we will adapt a similar approach. Note, however, that the proofs of most results in this section markedly differ from the proofs in the previous section. We begin with a time-dependent $\mathrm L^1(\Omega)$ estimate on certain combinations of the concentrations. This result is inspired by \cite[Theorem 3.1, p.495]{DFMV07}. Our proof argues along similar lines as in \cite{DFMV07} while keeping track of the polynomial (in time) bound.
\begin{Lem}\label{Lem:L1}
Let $N\le 3$ and let $(a,b,c)$ be the solution to the degenerate system \eqref{eq:model 11}. Then, there exist positive constants $\ell_1$ and $\ell_2$ such that for all $t\ge0$, we have
\begin{align*}
\int_0^t \int_\Omega \Big( a^2(s,x) + a(s,x)c(s,x) \Big)\, {\rm d}x\, {\rm d}s & \le \ell_1 \left(1+t\right),
\\
\int_0^t \int_\Omega \Big( b^2(s,x) + b(s,x)c(s,x) \Big)\, {\rm d}x\, {\rm d}s & \le \ell_2 \left(1+t\right).
\end{align*}
\end{Lem}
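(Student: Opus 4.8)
The plan is to adapt the duality argument of \cite{DFMV07} to the present degenerate setting by reducing to a single scalar equation for the combination $w := a+c$. Adding the first and third equations of \eqref{eq:model 11} cancels the reaction terms, so that $w$ solves the (degenerate) parabolic equation $\partial_t w = d_a \Delta a = \Delta z$ with $z := d_a a$, together with homogeneous Neumann boundary conditions inherited from $\nabla a\cdot n = 0$. Two structural facts drive the estimate: positivity of the concentrations gives $0\le z = d_a a\le d_a(a+c)=d_a w$, and the mass conservation \eqref{mass conserve 1} gives $\int_\Omega w(t,x)\,{\rm d}x = M_1\vert\Omega\vert$ for every $t\ge0$, so that $\overline w = M_1$. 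Note that the quantity to be controlled is precisely $\int_\Omega z\,w\,{\rm d}x = d_a\int_\Omega (a^2 + ac)\,{\rm d}x$.

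First I would set up a negative-norm (dual) energy. Since $w-M_1$ has zero spatial average, let $v(t,\cdot)$ denote the unique solution of $-\Delta v = w - M_1$ in $\Omega$ with $\nabla v\cdot n = 0$ on $\partial\Omega$ and $\overline v = 0$, and consider $\mathcal F(t) := \frac12\int_\Omega \vert \nabla v(t,\cdot)\vert^2\,{\rm d}x$. Differentiating the relation $-\Delta v = w-M_1$ in time and using $\partial_t w = \Delta z$ together with the boundary conditions shows that $\partial_t v = \overline z - z$. Consequently,
\[
\frac{{\rm d}}{{\rm d}t}\,\mathcal F(t) = \int_\Omega (w - M_1)\,\partial_t v\,{\rm d}x = -\int_\Omega z\,w\,{\rm d}x + M_1\int_\Omega z\,{\rm d}x,
\]
where the term carrying $\overline z$ drops out because $w-M_1$ has zero mean.

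Integrating this identity over $(0,t)$ and discarding the nonnegative term $\mathcal F(t)$ yields
\[
d_a\int_0^t\!\int_\Omega (a^2 + ac)\,{\rm d}x\,{\rm d}s = \int_0^t\!\int_\Omega z\,w\,{\rm d}x\,{\rm d}s \le \mathcal F(0) + M_1\int_0^t\!\int_\Omega z\,{\rm d}x\,{\rm d}s.
\]
Here $\mathcal F(0)=\frac12\int_\Omega\vert\nabla v(0,\cdot)\vert^2\,{\rm d}x$ is a finite constant depending only on $w_0 = a_0+c_0$, and, by positivity and \eqref{mass conserve 1}, $\int_\Omega z\,{\rm d}x = d_a\int_\Omega a\,{\rm d}x \le d_a\int_\Omega(a+c)\,{\rm d}x = d_a M_1\vert\Omega\vert$, so the last integral is bounded by $d_a M_1\vert\Omega\vert\, t$. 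Dividing by $d_a$ gives the first estimate with, for instance, $\ell_1 := \max\big\{ \frac{1}{d_a}\mathcal F(0),\ M_1^2\vert\Omega\vert\big\}$. The second estimate follows by running the identical argument with $w := b+c$, $z := d_b b$ and $\overline w = M_2$ (using \eqref{mass conserve 2}), producing $\ell_2$ in terms of $d_b$, $M_2$ and $b_0+c_0$.

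I expect the main obstacle to be the degeneracy $d_c=0$ itself. The effective coefficient $z/w = d_a a/(a+c)$ in $\partial_t w = \Delta z$ has no positive lower bound, so the usual $\mathrm L^2(\Omega_{0,t})$ duality estimate (which would give a \emph{uniform}-in-time bound) is not available. The device above sidesteps this: testing against the mean-zero potential $v$ never inverts the diffusion and uses only the upper control $z\le d_a w$, positivity, and mass conservation; it is exactly the linearly growing term $M_1\int_0^t\!\int_\Omega z$ that is responsible for the $(1+t)$ factor, matching the polynomial growth anticipated in the statement. The only analytic points to check are that $t\mapsto\mathcal F(t)$ is differentiable and that the integrations by parts are licit; both are guaranteed by the smoothness of the solution in dimension $N\le3$ from \cite[Theorem 3.2]{DF15}.
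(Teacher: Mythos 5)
Your proof is correct, and it takes a genuinely different route from the paper's. Both arguments rest on the same structural facts --- adding the $a$ and $c$ equations in \eqref{eq:model 11} gives $\partial_t(a+c)=\Delta(d_a a)$ with Neumann boundary conditions, together with positivity and the conservation law \eqref{mass conserve 1} --- and both bound the same quantity $\int_0^t\int_\Omega d_a\, a(a+c)\,{\rm d}x\,{\rm d}s$. The paper, following \cite{DFMV07}, sets $Z=\frac{d_a a}{a+c}$, solves the backward parabolic dual problem \eqref{model 31} driven by an arbitrary test function $H$, tests the equation for $a+c$ against the dual solution, bounds the dual solution at time zero in $\mathrm L^2(\Omega)$ via the Poincar\'e--Wirtinger inequality and a mean estimate (this is exactly where the factor $T$ enters), and concludes by $\mathrm L^2$ duality that $\int_0^T\int_\Omega (a+c)^2 Z\,{\rm d}x\,{\rm d}t$ grows at most linearly. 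You instead introduce the zero-mean elliptic potential $-\Delta v=(a+c)-M_1$, whose solvability is guaranteed precisely by mass conservation, and differentiate the negative-norm energy $\mathcal F=\frac12\Vert\nabla v\Vert^2_{\mathrm L^2(\Omega)}$; your identity $\partial_t v=\overline z-z$ (with $z=d_a a$) holds because $\partial_t v+z$ is harmonic with homogeneous Neumann data (note $\nabla z\cdot n=d_a\nabla a\cdot n=0$) and $v$ has zero mean, and it yields $\frac{{\rm d}}{{\rm d}t}\mathcal F=-\int_\Omega z\,(a+c)\,{\rm d}x+M_1\int_\Omega z\,{\rm d}x$, after which positivity and mass conservation finish the argument with the explicit constant $\ell_1=\max\bigl\{\mathcal F(0)/d_a,\,M_1^2\vert\Omega\vert\bigr\}$. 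Your route is more elementary and self-contained: each time slice requires only a stationary Neumann problem, and you never have to solve a backward parabolic equation whose coefficient $Z$ is merely bounded above and admits no uniform positive lower bound in the degenerate case $d_c=0$ --- a technical point that the paper's proof passes over silently and that would ordinarily call for a regularization of $Z$. The paper's route, in exchange, stays within the standard duality framework of \cite{CDF14,DFMV07} and produces the reusable intermediate bound $\int_0^T\int_\Omega Z(\Delta w)^2\,{\rm d}x\,{\rm d}t\le\Vert H\Vert^2_{\mathrm L^2(\Omega_T)}$ for the dual solution $w$ of \eqref{model 31}. Both proofs extract the linear-in-$t$ growth from the same source: the spatial mass bound $\int_\Omega d_a a\,{\rm d}x\le d_a M_1\vert\Omega\vert$ integrated over $(0,t)$.
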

\begin{proof}
Adding up the equations for $a$ and $c$ in the degenerate model \eqref{eq:model 11}, we obtain
\begin{align}\label{first relation1}
\partial_t \left( a + c \right) - d_a \Delta a = 0.
\end{align}
Let $H\in \mathrm C^\infty_c(\Omega)$ be arbitrary and let $Z:=\frac{a\, d_a}{a+c}$. Now consider the following backward parabolic problem:
\begin{equation}\label{model 31}
\left\{
\begin{aligned}
-\partial_t w - Z \Delta w & = H \, \sqrt{Z} \qquad \mbox{ in }\ \Omega_T,
\\
\nabla w \cdot n(x) & = 0 \qquad \qquad \mbox{ on }\ \partial\Omega_T,
\\
w(T,x) & = 0 \qquad \qquad \ \mbox{ in }\ \Omega.
\end{aligned}
\right.
\end{equation}
In this duality approach, the idea is to multiply the equation \eqref{first relation1} by the solution $w$ to the backward problem \eqref{model 31} followed by an integration over $\Omega_T$ leading to
\begin{align*}
- \int_\Omega w(0,x) \left( a_0(x) + c_0(x) \right)\, {\rm d}x - \int_0^T \int_\Omega \left( a + c \right) \partial_t w \, {\rm d}x\, {\rm d}t - \int_0^T \int_\Omega d_a a \Delta w \, {\rm d}x\, {\rm d}t = 0,
\end{align*}
thanks to integration by parts. Using the equation satisfied by $w$ in \eqref{model 31}, we deduce the following:
\begin{align}\label{first relation2}
\int_0^T \int_\Omega \left( a + c \right)\, H \, \sqrt{Z}\, {\rm d}x\, {\rm d}t 
= \int_\Omega w(0,x) \left( a_0(x) + c_0(x) \right)\, {\rm d}x
\le \left\Vert w(0,\cdot)\right\Vert_{\mathrm L^2(\Omega)} \left\Vert a_0 + c_0 \right\Vert_{\mathrm L^2(\Omega)},
\end{align}
where the inequality is due to the Cauchy-Schwarz inequality. Our next objective is to get an estimate for $\left\Vert w(0,\cdot)\right\Vert_{\mathrm L^2(\Omega)}$. To that end, multiply the evolution equation by $-\Delta w$ and integrating with respect to the spatial variable yields
\[
\int_\Omega \Delta w \partial_t w\, {\rm d}x + \int_\Omega Z \left(\Delta w\right)^2\, {\rm d}x = -\int_\Omega H \sqrt{Z} \Delta w\, {\rm d}x.
\]
Performing an integration by parts in the first term on the left hand side (while using the homogeneous Neumann boundary condition from \eqref{model 31}) and employing Young's inequality for the term on the right hand side yields
\[
-\frac12 \frac{{\rm d}}{{\rm d}t} \int_\Omega \left\vert \nabla w\right\vert^2\, {\rm d}x + \int_\Omega Z \left(\Delta w\right)^2\, {\rm d}x \le \frac12 \int_\Omega H^2\, {\rm d}x + \frac12 \int_\Omega Z \left(\Delta w\right)^2\, {\rm d}x.
\]
Integrating the above inequality on the interval $(0,T)$ in the time variable results in
\begin{align}\label{first relation3}
\int_\Omega \left\vert \nabla w(0,x)\right\vert^2\, {\rm d}x + \int_0^T\int_\Omega Z \left(\Delta w\right)^2\, {\rm d}x\, {\rm d}t \le \int_0^T\int_\Omega H^2\, {\rm d}x\, {\rm d}t.
\end{align}
Hence, invoking Poincar\'e-Wirtinger inequality, we get
\[
\int_\Omega \left( w(0,x) - \frac{1}{\left\vert\Omega\right\vert} \int_\Omega w(0,y)\, {\rm d}y \right)^2\, {\rm d}x \le P(\Omega) \int_\Omega \left\vert \nabla w(0,x)\right\vert^2\, {\rm d}x,
\]
where $P(\Omega)$ is the Poincar\'e constant. Note that integrating the evolution equation in \eqref{model 31} over $\Omega_T$ yields
\begin{align*}
\int_\Omega w(0,x)\, {\rm d}x & = \int_0^T \int_\Omega Z \Delta w \, {\rm d}x\, {\rm d}t + \int_0^T \int_\Omega H \sqrt{Z} \, {\rm d}x\, {\rm d}t
\\
& \le \left( \left( \int_0^T \int_\Omega Z \left(\Delta w\right)^2\, {\rm d}x\, {\rm d}t \right)^\frac12 + \left( \int_0^T \int_\Omega H^2\, {\rm d}x\, {\rm d}t \right)^\frac12 \right) \left\Vert \sqrt{Z} \right\Vert_{\mathrm L^2(\Omega_T)}
\\
& \le 2 \left\Vert H \right\Vert_{\mathrm L^2(\Omega_T)} \left\Vert \sqrt{Z} \right\Vert_{\mathrm L^2(\Omega_T)}
\end{align*}
thanks to the estimate from \eqref{first relation3}. Using the fact that $Z\le d_a$, we deduce
\begin{align}\label{first relation4}
\left(\int_\Omega w(0,x)\, {\rm d}x\right)^2 \le 4 d_a T \left\vert \Omega \right\vert \left\Vert H \right\Vert^2_{\mathrm L^2(\Omega_T)}.
\end{align}
Hence we obtain
\begin{align*}
\int_\Omega \left\vert w(0,x)\right\vert^2\, {\rm d}x 
\le 2 \int_\Omega \left( w(0,x) - \frac{1}{\left\vert\Omega\right\vert} \int_\Omega w(0,y)\, {\rm d}y \right)^2\, {\rm d}x + \frac{2}{\left\vert\Omega\right\vert} \left(\int_\Omega w(0,x)\, {\rm d}x\right)^2
\le \left( 2 P(\Omega) + 8 d_a T \right)\left\Vert H \right\Vert^2_{\mathrm L^2(\Omega_T)},
\end{align*}
thanks to \eqref{first relation3} and \eqref{first relation4}. Going back to \eqref{first relation2}, we have thus obtained
\[
\int_0^T \int_\Omega \left( a + c \right)\, H \, \sqrt{Z}\, {\rm d}x\, {\rm d}t 
\le \left( 2 P(\Omega) + 8 d_a T \right)^\frac12 \left\Vert a_0 + c_0 \right\Vert_{\mathrm L^2(\Omega)} \left\Vert H \right\Vert_{\mathrm L^2(\Omega_T)}.
\]
Since the above inequality holds true for arbitrary $H\in\mathrm C^\infty_c(\Omega)$, we deduce by duality that
\[
\int_0^T \int_\Omega \left( a + c \right)^2 Z\, {\rm d}x\, {\rm d}t \le \left( 2 P(\Omega) + 8 d_a T \right) \left\Vert a_0 + c_0 \right\Vert^2_{\mathrm L^2(\Omega)}.
\]
Substituting for $Z$ in the above inequality, we arrive at
\[
\int_0^T \int_\Omega \left( a^2 + ac \right) \, {\rm d}x\, {\rm d}t \le \left(\frac{2 P(\Omega) + 8 d_a}{d_a}\right) \left\Vert a_0 + c_0 \right\Vert^2_{\mathrm L^2(\Omega)} (1+T).
\]
Proceeding exactly as above but working with the equation satisfied by $b+c$, we can obtain
\[
\int_0^T \int_\Omega \left( b^2 + bc \right) \, {\rm d}x\, {\rm d}t \le \left(\frac{2 P(\Omega) + 8 d_b}{d_b}\right) \left\Vert b_0 + c_0 \right\Vert^2_{\mathrm L^2(\Omega)} (1+T).
\]
This concludes the proof.
\end{proof}
\begin{Lem}\label{L^p estimate d_c}
Let $N\le 3$ and let $(a,b,c)$ be the solution to the degenerate system \eqref{eq:model 11}. Then, there exist positive constants $\ell_3, \ell_4, \ell_7$ such that
\begin{align*}
\left\Vert a(t,\cdot)\right\Vert_{\mathrm L^\frac32(\Omega)} & \le \ell_3 \left(1+t\right)^\frac13 \qquad \mbox{ for } \, t\geq 0,
\\
\left\Vert b(t,\cdot)\right\Vert_{\mathrm L^\frac32(\Omega)} & \le \ell_4 \left(1+t\right)^\frac13 \qquad \mbox{ for } \, t\geq 0,
\\
\left\Vert c(t,\cdot)\right\Vert_{\mathrm L^3(\Omega)} & \le \ell_7 \left(1+t\right) \qquad \, \, \mbox{ for } \, t\geq 0.
\end{align*}
\end{Lem}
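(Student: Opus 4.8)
The plan is to read the three bounds off elementary energy estimates that feed on the space--time control of Lemma~\ref{Lem:L1}, together with the conserved masses \eqref{mass conserve 1}--\eqref{mass conserve 2}; the bound on the non-diffusive species $c$ will instead come from the explicit solution of its ordinary differential equation in time. First I would treat $a$, the argument for $b$ being identical after exchanging the roles of \eqref{mass conserve 1} and \eqref{mass conserve 2}.

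Testing the first equation of \eqref{eq:model 11} against $a$ and integrating by parts, the homogeneous Neumann condition removes the boundary term while the sign of $-\int_\Omega a^2 b\,{\rm d}x$ is favourable, so that
\[
\frac{1}{2}\frac{{\rm d}}{{\rm d}t}\int_\Omega a^2\,{\rm d}x + d_a\int_\Omega \vert\nabla a\vert^2\,{\rm d}x \le \int_\Omega ac\,{\rm d}x .
\]
Integrating on $(0,t)$ and invoking Lemma~\ref{Lem:L1} yields at once the pointwise-in-time bound $\Vert a(t,\cdot)\Vert_{\mathrm L^2(\Omega)}^2 \le \Vert a_0\Vert_{\mathrm L^2(\Omega)}^2 + 2\ell_1(1+t)$ and the space--time gradient bound $\int_0^t\Vert\nabla a\Vert_{\mathrm L^2(\Omega)}^2\,{\rm d}s \le C(1+t)$. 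The decisive step is then to interpolate $\mathrm L^{3/2}$ between $\mathrm L^1$ and $\mathrm L^2$, namely $\Vert a\Vert_{\mathrm L^{3/2}(\Omega)} \le \Vert a\Vert_{\mathrm L^1(\Omega)}^{1/3}\Vert a\Vert_{\mathrm L^2(\Omega)}^{2/3}$, and to bound $\Vert a\Vert_{\mathrm L^1(\Omega)}\le M_1\vert\Omega\vert$ by the conservation property \eqref{mass conserve 1}. This produces $\Vert a(t,\cdot)\Vert_{\mathrm L^{3/2}(\Omega)}\le \ell_3(1+t)^{1/3}$, and symmetrically the bound on $\Vert b(t,\cdot)\Vert_{\mathrm L^{3/2}(\Omega)}$.

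For $c$ I would exploit that, with $d_c=0$, the third equation of \eqref{eq:model 11} is the linear ODE $\partial_t c + c = ab$ at each point $x$, whence $c(t,x) = c_0(x)e^{-t} + \int_0^t e^{-(t-s)}(ab)(s,x)\,{\rm d}s$. The $\mathrm L^2(0,t)$ bounds on $\Vert a\Vert_{\mathrm W^{1,2}(\Omega)}$ and $\Vert b\Vert_{\mathrm W^{1,2}(\Omega)}$ just obtained, combined with the Sobolev embedding $\mathrm W^{1,2}(\Omega)\hookrightarrow \mathrm L^6(\Omega)$ valid for $N=3$, place $a$ and $b$ in $\mathrm L^2(0,t;\mathrm L^6(\Omega))$ with norm at most $C(1+t)^{1/2}$; a Cauchy--Schwarz in time then gives $\int_0^t\Vert (ab)(s,\cdot)\Vert_{\mathrm L^3(\Omega)}\,{\rm d}s \le \Vert a\Vert_{\mathrm L^2(0,t;\mathrm L^6)}\,\Vert b\Vert_{\mathrm L^2(0,t;\mathrm L^6)}\le C(1+t)$. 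Applying Minkowski's integral inequality to the representation of $c$ and using $e^{-(t-s)}\le 1$, I conclude $\Vert c(t,\cdot)\Vert_{\mathrm L^3(\Omega)} \le \Vert c_0\Vert_{\mathrm L^3(\Omega)} + \int_0^t\Vert (ab)(s,\cdot)\Vert_{\mathrm L^3(\Omega)}\,{\rm d}s \le \ell_7(1+t)$.

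The main obstacle is precisely the bound on $c$: since $c$ carries no diffusive regularisation, all of its spatial integrability must be manufactured out of the product $ab$, and reaching $\mathrm L^3$ forces one to squeeze $\mathrm L^6$ integrability out of $a$ and $b$, which in dimension three is exactly what the critical Sobolev embedding affords from the space--time $\mathrm W^{1,2}$ control, and no more. A second, subtler point is the sharp growth rate $(1+t)^{1/3}$ for $a$ and $b$: the naive embedding $\mathrm L^2(\Omega)\hookrightarrow \mathrm L^{3/2}(\Omega)$ would only yield $(1+t)^{1/2}$, and it is the interpolation against the conserved $\mathrm L^1$ mass that recovers the stated exponent. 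In dimensions $N=1,2$ the same scheme applies, the embedding $\mathrm W^{1,2}\hookrightarrow \mathrm L^6$ being then non-critical.
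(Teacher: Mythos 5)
Your proposal is correct, and for $a$ and $b$ it coincides with the paper's proof: the same energy estimate (testing the $a$-equation with $a$, discarding the favourable $-\int_\Omega a^2b$ term), the same appeal to Lemma~\ref{Lem:L1}, and the same interpolation $\Vert a\Vert_{\mathrm L^{3/2}}\le \Vert a\Vert_{\mathrm L^1}^{1/3}\Vert a\Vert_{\mathrm L^2}^{2/3}$ against the conserved mass, which is indeed what produces the exponent $1/3$ rather than $1/2$. The only divergence is in the treatment of $c$, and there your route is slightly different and in fact tidier: the paper integrates the pointwise inequality $\partial_t c\le ab$, cubes, and invokes Jensen's inequality to get $\Vert c(t,\cdot)\Vert_{\mathrm L^3}^3\le 4(1+t)^2\bigl(\Vert c_0\Vert_{\mathrm L^3}^3+\int_0^t\Vert ab\Vert_{\mathrm L^3}^3\,{\rm d}s\bigr)$ before applying H\"older, whereas you use the exact Duhamel formula $c(t,x)=c_0(x)e^{-t}+\int_0^te^{-(t-s)}(ab)(s,x)\,{\rm d}s$ and Minkowski's integral inequality, reducing everything to $\int_0^t\Vert ab\Vert_{\mathrm L^3}\,{\rm d}s$, which Cauchy--Schwarz in time together with the $\mathrm L^2(0,t;\mathrm L^6)$ control (Sobolev embedding plus the space--time $\mathrm H^1$ bound) handles at first power. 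Both arguments rest on exactly the same ingredients --- Lemma~\ref{Lem:L1}, the energy estimate, and $\mathrm W^{1,2}\hookrightarrow\mathrm L^6$ --- but your version needs only the first power of $\Vert ab\Vert_{\mathrm L^3}$ under the time integral instead of its cube, which avoids the exponent bookkeeping in the paper's H\"older step (whose displayed powers of $\Vert a\Vert_{\mathrm L^6}$ and $\Vert b\Vert_{\mathrm L^6}$ are, as printed, inconsistent and presumably typographical); one small point of care in your write-up is that the space--time $\mathrm W^{1,2}$ bound is not produced by the energy estimate alone: its $\mathrm L^2$ part must be taken directly from Lemma~\ref{Lem:L1} (integrating your pointwise-in-time $\mathrm L^2$ bound would give $(1+t)^2$), which is how the paper assembles it as well.
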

\begin{proof}
Multiplying the equation for $a$ in \eqref{eq:model 11} by $a$ and integrating in space and time variables yields
\[
\frac12 \int_\Omega a^2\, {\rm d}x + d_a \int_0^t\int_\Omega \left\vert \nabla a\right\vert^2\, {\rm d}x\, {\rm d}s \le \frac12 \int_\Omega a^2_0\, {\rm d}x + \int_0^t \int_\Omega ac\, {\rm d}x\, {\rm d}s,
\]
where we have used the fact that $a,b$ are non-negative. The estimate from Lemma \ref{Lem:L1} helps us get
\begin{align}\label{eq:Lp-step-1}
\left\Vert a(t,\cdot)\right\Vert^2_{\mathrm L^2(\Omega)} + d_a \left\Vert \nabla a \right\Vert^2_{\mathrm L^2(\Omega_t)} \le \left\Vert a_0\right\Vert^2_{\mathrm L^2(\Omega)} + 2\ell_1 \left(1+t\right).
\end{align}
As 
\[
\frac{\frac13}{1}+\frac{1-\frac13}{2}=\frac{2}{3},
\]
interpolation leads to the following bound:
\[
\left\Vert a(t,\cdot)\right\Vert_{\mathrm L^\frac32(\Omega)} 
\le \left\Vert a(t,\cdot)\right\Vert^\frac13_{\mathrm L^1(\Omega)} \left\Vert a(t,\cdot)\right\Vert^\frac23_{\mathrm L^2(\Omega)}
\le M_1^\frac13 \left\vert\Omega\right\vert^\frac13 \left( \left\Vert a_0\right\Vert^2_{\mathrm L^2(\Omega)} + 2\ell_1 \left(1+t\right) \right)^\frac13
\]
where we have used the mass conservation property \eqref{mass conserve 1}. Taking $\ell_3:= \left(M_1\left\vert\Omega\right\vert\left(\left\Vert a_0\right\Vert^2_{\mathrm L^2(\Omega)} + 2\ell_1\right)\right)^\frac13$, we have thus shown
\[
\left\Vert a(t,\cdot)\right\Vert_{\mathrm L^\frac32(\Omega)} \le \ell_3 \left(1+t\right)^\frac13 \qquad \mbox{ for } \, t\geq 0.
\]
Arguing exactly as above, we obtain
\[
\left\Vert b(t,\cdot)\right\Vert_{\mathrm L^\frac32(\Omega)} \le \ell_4 \left(1+t\right)^\frac13 \qquad \mbox{ for } \, t\geq 0,
\]
with the constant $\ell_4 = \left(M_2\left\vert\Omega\right\vert\left(\left\Vert b_0\right\Vert^2_{\mathrm L^2(\Omega)} + 2\ell_2\right)\right)^\frac13$. It follows from \eqref{eq:Lp-step-1} and the estimate from Lemma \ref{Lem:L1} that
\begin{align}\label{eq:Lp-step-2}
\int_0^t\left\Vert a(s,\cdot)\right\Vert^2_{\mathrm H^1(\Omega)}\, {\rm d}s \le \ell_5 (1+t).
\end{align}
A similar estimate holds for $\left\Vert b\right\Vert^2_{\mathrm L^2(0,t;\mathrm H^1(\Omega))}$ as well. By Sobolev embedding we have
\begin{align}\label{eq:Lp-step-3}
\left\Vert a(t,\cdot)\right\Vert_{\mathrm L^6(\Omega)} \le \ell_6 \left\Vert a(t,\cdot)\right\Vert_{\mathrm H^1(\Omega)},
\qquad
\left\Vert b(t,\cdot)\right\Vert_{\mathrm L^6(\Omega)} \le \ell_6 \left\Vert b(t,\cdot)\right\Vert_{\mathrm H^1(\Omega)}.
\end{align}
Exploiting the non-negativity of $c$, observe from \eqref{eq:model 11} that $c$ satisfies the inequality $\partial_t c \le ab$. Hence we have 
\[
\left( c(t,x)\right)^3
\le 2^2 \left( \left(c_0(x)\right)^3 + \left( \int_0^t a(s,x)\, b(s,x)\, {\rm d}s\right)^3 \right)
\le 2^2(1+t)^2 \left( \left(c_0(x)\right)^3 + \int_0^t \left( a(s,x)\, b(s,x)\right)^3\, {\rm d}s \right),
\]
thanks to Jensen's inequality. Integrating the above inequality in the $x$ variable yields
\[
\left\Vert c(t,\cdot)\right\Vert^3_{\mathrm L^3(\Omega)} 
\le 2^2(1+t)^2 \left( \left\Vert c_0\right\Vert^3_{\mathrm L^3(\Omega)} + \int_0^t \left\Vert ab(s,\cdot) \right\Vert^3_{\mathrm L^3(\Omega)}\, {\rm d}s\right). 
\]
Employing the H\"older inequality leads to the following bound
\begin{align*}
\left\Vert c(t,\cdot)\right\Vert^3_{\mathrm L^3(\Omega)} 
& \le 2^2(1+t)^2 \left( \left\Vert c_0\right\Vert^3_{\mathrm L^3(\Omega)} + \int_0^t \left\Vert a(s,\cdot) \right\Vert^\frac12_{\mathrm L^6(\Omega)}\, \left\Vert b(s,\cdot) \right\Vert^\frac12_{\mathrm L^6(\Omega)} \, {\rm d}s\right) 
\\
& \le 2^2(1+t)^2 \left( \left\Vert c_0\right\Vert^3_{\mathrm L^3(\Omega)} + \frac12 \int_0^t \left( \left\Vert a(s,\cdot) \right\Vert_{\mathrm L^6(\Omega)} + \left\Vert b(s,\cdot) \right\Vert_{\mathrm L^6(\Omega)} \right) \, {\rm d}s\right),
\end{align*}
where we have applied the Young's inequality. Hence it follows from \eqref{eq:Lp-step-2} and \eqref{eq:Lp-step-3} that
\[
\left\Vert c(t,\cdot)\right\Vert_{\mathrm L^3(\Omega)} \le \ell_7 (1+t) \qquad \mbox{ for }t\ge0,
\]
for some constant $\ell_7$
\end{proof}
Our next task is to obtain polynomial (in time) growth estimates on the solution in the supremum norm.
\begin{Prop}\label{L^infty d_c}
Let $N\le 3$ and let $(a,b,c)$ be the solution to the degenerate system \eqref{eq:model 11}. Then, there exist positive constants $K_\infty$ and $\mu$ such that for all $t\ge0$, we have
\begin{align*}
\left\Vert a \right\Vert_{\mathrm L^\infty(\Omega_t)} & \le K_\infty \left(1+t\right)^\mu
\\
\left\Vert b \right\Vert_{\mathrm L^\infty(\Omega_t)} & \le K_\infty \left(1+t\right)^\mu
\\
\left\Vert c \right\Vert_{\mathrm L^\infty(\Omega_t)} & \le K_\infty \left(1+t\right)^\mu
\end{align*}
\end{Prop}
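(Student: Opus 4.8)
The plan is to run a bootstrap on the integrability of the diffusing species $a$ and $b$, exploiting that they are subsolutions of heat equations driven by the already-controlled non-diffusing species $c$, and then to close the loop by integrating the (essentially ordinary) differential relation satisfied by $c$. First I would convert the pointwise-in-time spatial estimates of Lemma \ref{L^p estimate d_c} into space-time estimates on unit parabolic cylinders: from $\|a(t,\cdot)\|_{\mathrm L^{3/2}(\Omega)} \le \ell_3(1+t)^{1/3}$, and similarly for $b$, while $\|c(t,\cdot)\|_{\mathrm L^3(\Omega)} \le \ell_7(1+t)$, integrating in time over $(\tau,\tau+1)$ shows that $\|a\|_{\mathrm L^{3/2}(\Omega_{\tau,\tau+1})}$, $\|b\|_{\mathrm L^{3/2}(\Omega_{\tau,\tau+1})}$ and $\|c\|_{\mathrm L^3(\Omega_{\tau,\tau+1})}$ all grow at most polynomially in $\tau$.

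Next I would exploit the subsolution structure, arguing exactly as in the proof of Proposition \ref{N=3 main}. Using $ab\ge0$ we have $\partial_t a - d_a\Delta a \le c$ and $\partial_t b - d_b\Delta b \le c$. Introducing the temporal cutoff $\phi_\tau$ and comparing $\phi_\tau a$, via the maximum principle, with the solution $\zeta$ of the heat equation with source $\phi_\tau' a + \phi_\tau c$ and zero initial data, the integrability estimate of Theorem \ref{estimation 1} applies. The source lies in $\mathrm L^{3/2}(\Omega_{\tau,\tau+2})$ (it is dominated by $a\in \mathrm L^{3/2}$, since $c\in \mathrm L^3\subset \mathrm L^{3/2}$), and for $N=3$ the parabolic gain $\tfrac1s = \tfrac23 - \tfrac25 = \tfrac{4}{15}$ promotes $a$, and likewise $b$, to $\mathrm L^{15/4}(\Omega_{\tau+1,\tau+2})$ with polynomial growth; stitching the cylinders upgrades this to a bound on every unit cylinder. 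Since $\tfrac{15}{4} > \tfrac52 = \tfrac{N+2}{2}$, a second pass of the same argument — now the source $\phi_\tau' a + \phi_\tau c$ lies in $\mathrm L^3$, which exceeds the critical exponent $\tfrac{N+2}{2}$ — yields the desired $\|a\|_{\mathrm L^\infty(\Omega_t)},\|b\|_{\mathrm L^\infty(\Omega_t)} \le K_\infty(1+t)^\mu$.

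Finally, for the non-diffusing species I would integrate the differential relation. From $\partial_t c = ab - c \le ab$ together with non-negativity of $c$, we obtain $c(t,x) \le c_0(x) + \int_0^t (ab)(s,x)\,{\rm d}s$, whence $\|c(t,\cdot)\|_{\mathrm L^\infty(\Omega)} \le \|c_0\|_{\mathrm L^\infty(\Omega)} + \int_0^t \|a(s,\cdot)\|_{\mathrm L^\infty(\Omega)}\,\|b(s,\cdot)\|_{\mathrm L^\infty(\Omega)}\,{\rm d}s$, which is again polynomial in $t$ by the previous step. After enlarging the constants $K_\infty$ and $\mu$ to absorb all three bounds, this gives the stated estimate for $c$ and completes the proof.

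I expect the main obstacle to be organizing the bootstrap so that it genuinely closes without circularity. The self-interaction term $\phi_\tau' a$ forces the source to inherit the weaker $\mathrm L^{3/2}$ integrability of $a$ itself, so one cannot reach $\mathrm L^\infty$ for $a$ in a single step even though $c$ already lies above the critical exponent; the intermediate upgrade of $a$ to $\mathrm L^{15/4}$ is therefore unavoidable. The remaining technical work is the careful bookkeeping of the polynomial exponents through the two bootstrap passes and the final time integration, and the stitching of the unit cylinders into the global cylinder $\Omega_t$.
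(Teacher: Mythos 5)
Your proposal is correct in substance, but it takes a genuinely different route from the paper's own proof of this proposition. The paper argues in one step: it writes $a$ via the Duhamel representation \eqref{d_a: representation of solution} with the Neumann Green's function, drops the term $-ab$ by positivity, and uses the Gaussian bound \eqref{heat kernel estimate} together with Young's convolution inequality to obtain the smoothing estimate \eqref{Lp-Lq estimate}; taking $q=3$, $p=\infty$ there produces the kernel $(t-s)^{-N/6}$, which is integrable precisely because $N\le 3$, so the bound $\Vert c(s,\cdot)\Vert_{\mathrm L^3(\Omega)}\le \ell_7(1+s)$ from Lemma \ref{L^p estimate d_c} is upgraded directly to $\Vert a\Vert_{\mathrm L^\infty(\Omega_t)}\le K_\infty(1+t)^{\frac{18-N}{6}}$, with no cutoffs, no bootstrap, and no stitching of cylinders. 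Your argument instead transplants the machinery the paper uses in the $d_b=0$ section (the temporal cutoff $\phi_\tau$, the maximum-principle comparison as in \eqref{eq:pointwise-bd}, and the integrability estimates of Theorem \ref{estimation 1}, following Proposition \ref{N=3 main} and Lemma \ref{L^infty}), giving a two-pass bootstrap $\mathrm L^{3/2}\to\mathrm L^{s}\to\mathrm L^{\infty}$ on unit cylinders; this works and is arguably more robust (it would survive when a one-step kernel exponent is not integrable, provided one has enough starting integrability), at the price of heavier bookkeeping. Both proofs treat $c$ identically, by integrating $\partial_t c\le ab$ against the $\mathrm L^\infty$ bounds on $a$ and $b$. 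Two small points to repair in your write-up: first, Theorem \ref{estimation 1} yields $\Vert\zeta\Vert_{\mathrm L^s(\Omega_{\tau,\tau+2})}\le C_{IE}\Vert\theta\Vert_{\mathrm L^{3/2}(\Omega_{\tau,\tau+2})}$ only for $s$ \emph{strictly} below $\frac{(N+2)p}{N+2-2p}=\frac{15}{4}$, so you should fix some $s\in\left(3,\frac{15}{4}\right)$ (say $s=\frac72$, as the paper itself does elsewhere) rather than claim the endpoint $\mathrm L^{15/4}$; second, the cutoff argument only controls the cylinders $\Omega_{\tau+1,\tau+2}$, so the initial cylinder must be handled separately using the actual initial data $a_0\in\mathrm L^\infty(\Omega)$ (e.g.\ via the Green's function representation with nonzero initial datum), exactly as in the first part of the proof of Proposition \ref{N=3 main}. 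Neither point affects the viability of your approach.
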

\begin{proof}
Let $G_{d_a}$ denote the Green's function associated with the operator $\partial_t-d_a\Delta$ with Neumann boundary condition. We can express the solution $a$ as follows:
\begin{align}\label{d_a: representation of solution}
a(t,x) = \tilde{a}(t,x) + \int_0^t \int_\Omega G_{d_a}(t-s,x,y)\left[c-ab\right](s,y)\, {\rm d}y\, {\rm d}s,
\end{align}
where $\tilde{a}$ solves the following initial boundary value problem:
\begin{equation*}
\left\{
\begin{aligned}
\partial_t \tilde{a} - d_a \Delta \tilde{a} & = 0 \qquad \ \mbox{ in } \ \Omega_t,
\\
\nabla \tilde{a} \cdot n(x) & = 0 \qquad \ \mbox{ on }\ \partial\Omega_t,
\\
\tilde{a}(0,x) & = a_0 \qquad \mbox{ in } \ \Omega.
\end{aligned}
\right.
\end{equation*}
We recall the following Gaussian bound on the Neumann Green's function (see \cite[Theorem 2.2, p.37]{Morra83}): there exist positive constants $\texttt{C}_{H},\kappa$ such that
\begin{align}\label{heat kernel estimate}
\left\vert G_{d_a}(t-s,x,y) \right\vert \le \texttt{C}_{H} \frac{1}{(t-s)^\frac{N}{2}} e^{-\kappa \frac{\left\vert x-y\right\vert^2}{(t-s)}} =: g(t-s,x-y)
\end{align}
Also see \cite[Theorem 3.1, p.639]{ML15} for general parabolic operators. As a consequence we have the following bound on the solution $\tilde{a}$ to the above homogeneous problem:
\begin{align}\label{eq:tilde-a-bound}
\left\Vert \tilde{a}(t,\cdot)\right\Vert_{\mathrm L^p(\Omega)} \le C_S \left\Vert a_0 \right\Vert_{\mathrm L^p(\Omega)}
\end{align}
for some positive constant $C_{S}$, independent of time, and for any $p\ge1$. In \eqref{d_a: representation of solution}, the positivity of $a$ and $b$ leads to
\[
a(t,x) \le \tilde{a}(t,x) + \int_0^t \int_\Omega G_{d_a}(t-s,x,y) c(s,y)\, {\rm d}y\, {\rm d}s.
\]
Using the aforementioned Gaussian bound, we arrive at
\[
a(t,x) \le \tilde{a}(t,x) + \int_0^t \int_\Omega g(t-s,x-y) c(s,y)\, {\rm d}y\, {\rm d}s.
\]
Computing the $\mathrm L^p$ norm in the $x$ variable, the above inequality leads to
\[
\left\Vert a(t,\cdot) \right\Vert_{\mathrm L^p(\Omega)} \le C_S \left\Vert a_0 \right\Vert_{\mathrm L^p(\Omega)} + \int_0^t \left\Vert g(t-s, \cdot) \right\Vert_{\mathrm L^r(\Omega)} \left\Vert c(s,\cdot) \right\Vert_{\mathrm L^q(\Omega)}\, {\rm d}s,
\]
thanks to the bound \eqref{eq:tilde-a-bound}, the Minkowski's integral inequality and the Young's convolution inequality with
\[
1 + \frac{1}{p} = \frac{1}{r} + \frac{1}{q}.
\]
Therefore, there exists a positive constant $C_{H,N,r}$ such that
\begin{align}\label{Lp-Lq estimate}
\left\Vert a(t,\cdot) \right\Vert_{\mathrm L^p(\Omega)} \le C_S \left\Vert a_0 \right\Vert_{\mathrm L^p(\Omega)} + C_{H,N,r} \int_0^t (t-s)^{-\frac{N}{2}\left(\frac{1}{q} - \frac{1}{p}\right)} \left\Vert c(s,\cdot) \right\Vert_{\mathrm L^q(\Omega)}\, {\rm d}s
\end{align}
Taking $q=3$ and $p=\infty$ in the above bound, we obtain
\begin{align*}
\left\Vert a(t,\cdot) \right\Vert_{\mathrm L^\infty(\Omega)} & \le C_S \left\Vert a_0 \right\Vert_{\mathrm L^p(\Omega)} + C_{H,N,r} \int_0^t (t-s)^{-\frac{N}{6}} \left\Vert c(s,\cdot) \right\Vert_{\mathrm L^3(\Omega)}\, {\rm d}s
\\
& \le C_S \left\Vert a_0 \right\Vert_{\mathrm L^p(\Omega)} + C_{H,N,r} \int_0^t (t-s)^{-\frac{N}{6}} (1+s)\, {\rm d}s,
\end{align*}
thanks to the bound from Lemma \ref{L^p estimate d_c}. Hence we arrive at 
\[
\left\Vert a\right\Vert_{\mathrm L^\infty(\Omega_t)} \le K_\infty (1+t)^\frac{18-N}{6}
\]
for some positive constant $K_\infty$. Arguing along exactly same lines, we can obtain an estimate of $b$ in the supremum norm as well. Again, exploiting the positivity of $c$, we have from \eqref{eq:model 11}
\[
c(t,x) \le c_0(x) + \int_0^t a(s,x) b(s,x)\, {\rm d}s.
\]
The above supremum norm estimates on $a$ and $b$ will help us arrive at the supremum norm estimate for $c$ as well.
\end{proof}
Recall from the lower bound in \eqref{eq:dc-0-D-L2} that the term involving $\left\Vert \delta_C\right\Vert_{\mathrm L^2(\Omega)}$ is apparently missing. Similar to Propositions \ref{main relatio} and \ref{2.0.2}, we now derive a lower bound for the dissipation functional involving this missing term.
\begin{Prop}\label{missing theorem}
Let $N\le 3$ and let $(a,b,c)$ be the solution to the degenerate system \eqref{eq:model 11}. Then the entropy dissipation $D(a,b,c)$ satisfies
\begin{align}\label{eq:main-relatio-dc}
D(a,b,c) \ge \mathcal{K}_c\, (1+t)^{-\frac13} \left( \left\Vert \delta_A \right\Vert^2_{\mathrm L^2(\Omega)} + \left\Vert \delta_B \right\Vert^2_{\mathrm L^2(\Omega)} + \left\Vert \delta_C \right\Vert^2_{\mathrm L^2(\Omega)} \right)
\qquad
\mbox{ for }t\ge0,
\end{align}
where the positive constant $\mathcal{K}_c$ depends only on the domain $\Omega$, the constants $M_1$ and $M_2$ in the mass conservation properties \eqref{mass conserve 1}-\eqref{mass conserve 2} and the nonzero diffusion coefficients $d_a$ and $d_b$.
\end{Prop}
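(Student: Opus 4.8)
The plan is to bound the missing deviation $\|\delta_C\|^2_{\mathrm L^2(\Omega)}$ from above by the dissipation, up to a controlled polynomial weight in time, and then to combine this with the lower bounds \eqref{eq:dc-0-D-L6} and \eqref{eq:dc-0-D-L2}, which already control $\|\delta_A\|^2_{\mathrm L^2(\Omega)}$, $\|\delta_B\|^2_{\mathrm L^2(\Omega)}$ and $\|AB-C\|^2_{\mathrm L^2(\Omega)}$. Unlike the argument for Proposition \ref{main relatio}, no case distinction on the size of $\overline A$ will be needed here, since we directly estimate $\delta_C$ rather than recovering a missing factor out of a term of the form $\|\overline A B - \overline C\|^2_{\mathrm L^2(\Omega)}$.

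First I would write $C = AB - (AB-C)$ and pass to deviations from the mean. Since taking the deviation is linear and decreases the $\mathrm L^2$-norm, this yields
\[
\|\delta_C\|^2_{\mathrm L^2(\Omega)} \le 2\|AB - \overline{AB}\|^2_{\mathrm L^2(\Omega)} + 2\|AB - C\|^2_{\mathrm L^2(\Omega)}.
\]
The second term is already controlled by $\tfrac12 D(a,b,c)$ through \eqref{eq:dc-0-D-L6}. For the first term, I would use that $\overline{AB}$ is the $\mathrm L^2$-projection of $AB$ onto constants, so that $\|AB - \overline{AB}\|_{\mathrm L^2(\Omega)} \le \|AB - \overline A\,\overline B\|_{\mathrm L^2(\Omega)}$, and then expand $AB - \overline A\,\overline B = \overline A\,\delta_B + \overline B\,\delta_A + \delta_A\delta_B$, which gives
\[
\|AB - \overline A\,\overline B\|^2_{\mathrm L^2(\Omega)} \le 3\overline A^2\|\delta_B\|^2_{\mathrm L^2(\Omega)} + 3\overline B^2\|\delta_A\|^2_{\mathrm L^2(\Omega)} + 3\|\delta_A\delta_B\|^2_{\mathrm L^2(\Omega)}.
\]
The mass conservation properties \eqref{mass conserve 1}-\eqref{mass conserve 2} give $\overline A^2 \le \overline{A^2} \le M_1$ and $\overline B^2 \le M_2$, so the first two terms are harmless and time-independent.

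The crux is the quadratic term $\|\delta_A\delta_B\|^2_{\mathrm L^2(\Omega)} = \int_\Omega \delta_A^2\,\delta_B^2\,{\rm d}x$, which carries the whole time weight. Here I would use the pointwise bound $\delta_A^2 \le 2A^2 + 2\overline A^2 = 2a + 2\overline A^2$ to split
\[
\int_\Omega \delta_A^2\,\delta_B^2\,{\rm d}x \le 2\int_\Omega a\,\delta_B^2\,{\rm d}x + 2\overline A^2\|\delta_B\|^2_{\mathrm L^2(\Omega)},
\]
and then apply Hölder's inequality with exponents $\tfrac32$ and $3$ to obtain $\int_\Omega a\,\delta_B^2\,{\rm d}x \le \|a\|_{\mathrm L^{3/2}(\Omega)}\|\delta_B\|^2_{\mathrm L^6(\Omega)}$. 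The integrability estimate $\|a(t,\cdot)\|_{\mathrm L^{3/2}(\Omega)} \le \ell_3(1+t)^{1/3}$ from Lemma \ref{L^p estimate d_c} is exactly what produces the factor $(1+t)^{1/3}$, while $\|\delta_B\|^2_{\mathrm L^6(\Omega)}$ is controlled by $D(a,b,c)$ through \eqref{eq:dc-0-D-L6} (using $d_b>0$). This is the step I expect to be the main obstacle: the nonlinear term $\delta_A^2\delta_B^2$ is the genuine reason a uniform-in-time lower bound is unavailable, and the precise matching between the $(1+t)^{1/3}$ growth of $\|a\|_{\mathrm L^{3/2}(\Omega)}$ and the target exponent $-\tfrac13$ is what makes the argument close.

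Collecting all the estimates, and bounding $\|\delta_A\|^2_{\mathrm L^2(\Omega)}$, $\|\delta_B\|^2_{\mathrm L^2(\Omega)}$ through \eqref{eq:dc-0-D-L2} and $\|\delta_B\|^2_{\mathrm L^6(\Omega)}$ through \eqref{eq:dc-0-D-L6}, I would arrive at an inequality of the form $\|\delta_C\|^2_{\mathrm L^2(\Omega)} \le C\,(1+t)^{1/3}\,D(a,b,c)$ for some time-independent constant $C$; equivalently $D(a,b,c) \ge C^{-1}(1+t)^{-1/3}\|\delta_C\|^2_{\mathrm L^2(\Omega)}$. Finally, since \eqref{eq:dc-0-D-L2} also yields the time-independent lower bounds $D(a,b,c) \ge c_A\|\delta_A\|^2_{\mathrm L^2(\Omega)}$ and $D(a,b,c) \ge c_B\|\delta_B\|^2_{\mathrm L^2(\Omega)}$ with $c_A = \tfrac{4 d_a |\Omega|^{-2/3}}{P(\Omega)}$ and $c_B = \tfrac{4 d_b |\Omega|^{-2/3}}{P(\Omega)}$, averaging the three lower bounds and using $(1+t)^{-1/3}\le 1$ gives \eqref{eq:main-relatio-dc} with $\mathcal K_c := \tfrac13\min\{c_A, c_B, C^{-1}\}$. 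The cases $N=1,2$ go through verbatim, the only change being that the Sobolev embedding $\mathrm H^1(\Omega)\hookrightarrow \mathrm L^6(\Omega)$ used in \eqref{eq:dc-0-D-L6} is only more favourable in lower dimension.
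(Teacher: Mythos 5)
Your proof is correct, and it rests on the same pillars as the paper's own argument: the expansion $AB-\overline{A}\,\overline{B}=\overline{A}\,\delta_B+\overline{B}\,\delta_A+\delta_A\delta_B$, the H\"older step $\int_\Omega a\,\delta_B^2\,{\rm d}x \le \Vert a\Vert_{\mathrm L^{3/2}(\Omega)}\Vert \delta_B\Vert^2_{\mathrm L^6(\Omega)}$ combined with the $(1+t)^{1/3}$ growth of $\Vert a\Vert_{\mathrm L^{3/2}(\Omega)}$ from Lemma \ref{L^p estimate d_c}, and absorption into the dissipation via \eqref{eq:dc-0-D-L6} and \eqref{eq:dc-0-D-L2}. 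Where you genuinely differ is in the direction of the comparison between $\delta_C$ and $AB-C$. The paper lower-bounds $\Vert AB-C\Vert^2_{\mathrm L^2(\Omega)}$ by $\tfrac12\Vert\overline{A}\,\overline{B}-C\Vert^2_{\mathrm L^2(\Omega)}$ minus error terms, using the reverse inequality $(p-q)^2\ge \tfrac{p^2}{2}-q^2$; it then needs the separate claim $\Vert\overline{A}\,\overline{B}-C\Vert_{\mathrm L^2(\Omega)}\ge\Vert\delta_C\Vert_{\mathrm L^2(\Omega)}$, proved by factorising $C=\overline{A}\,\overline{B}(1+\mu(x))$, and finally absorbs the error terms into the gradient part of the dissipation by tuning a time-dependent interpolation parameter $\eta(t)\sim(1+t)^{-1/3}$ inside \eqref{eq:dc-0-D-L6}. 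You instead upper-bound $\Vert\delta_C\Vert^2_{\mathrm L^2(\Omega)}$ directly, using twice that averaging is the $\mathrm L^2$-orthogonal projection onto constants (the deviation contracts the norm, and $\overline{AB}$ is the best constant approximation of $AB$), so all your inequalities run in the forward direction and neither the factorisation trick nor the time-dependent $\eta$ is needed; the time weight only appears at the end, when you invert $\Vert\delta_C\Vert^2_{\mathrm L^2(\Omega)}\le C(1+t)^{1/3}D(a,b,c)$ and average the three lower bounds. This is a cleaner bookkeeping of the same estimate, and it yields a constant of the same quality as the paper's; note that both your constant and the paper's $\mathcal{K}_c$ inherit a dependence on the initial data through $\ell_3$, so neither proof fully matches the dependence announced in the statement, but that is an issue with the statement's wording rather than with your argument.
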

\begin{proof}
It follows from \eqref{eq:dc-0-D-L6} that 
\begin{align}\label{eq:dc-0-D-L6-bis}
D(a,b,c) \geq \frac{4d_a}{P(\Omega)} \left\Vert \delta_A \right\Vert^2_{\mathrm L^6(\Omega)} + \frac{4d_b}{P(\Omega)} \left\Vert \delta_B \right\Vert^2_{\mathrm L^6(\Omega)} + \eta \left\Vert AB - C \right\Vert^2_{\mathrm L^2(\Omega)}
\end{align}
for any $0\le \eta\le 4$. In order to relate the dissipation functional to the missing $\left\Vert \delta_C\right\Vert_{\mathrm L^2(\Omega)}$ term, we work on the following term:
\[
\left\Vert AB - C \right\Vert_{\mathrm L^2(\Omega)}.
\]
Note that we have
\begin{align*}
\left\Vert AB - C \right\Vert_{\mathrm L^2(\Omega)} & = \left\Vert \left( \delta_A + \overline{A} \right) \left( \delta_B + \overline{B} \right) - C \right\Vert_{\mathrm L^2(\Omega)}
\\
& \ge \frac12 \left\Vert \overline{A}\, \overline{B} - C \right\Vert_{\mathrm L^2(\Omega)} - 3 \left\Vert \overline{A}\, \delta_B \right\Vert^2_{\mathrm L^2(\Omega)} - 3 \left\Vert \overline{B}\, \delta_A \right\Vert^2_{\mathrm L^2(\Omega)} - 3 \left\Vert \delta_A\, \delta_B \right\Vert^2_{\mathrm L^2(\Omega)},
\end{align*}
where we have used the following algebraic identities that hold for all $p,q,r\in\mathbb{R}$:
\[
\left( p - q \right)^2 \ge \frac{p^2}{2} - q^2 
\qquad
\mbox{ and }
\qquad 
\left( p + q + r \right)^2 \le 3 \left( p^2 + q^2 + r^2 \right).
\]
Using the mass conservation property \eqref{mass conserve 1}, we have
\[
\int_\Omega A(t,x) \, {\rm d}x \le \left\vert \Omega \right\vert^\frac12 \left( \int_\Omega a(t,x)\, {\rm d}x\right)^\frac12 \le \left\vert \Omega \right\vert^\frac12 M_1^\frac12 \left\vert \Omega \right\vert^\frac12 \implies \overline{A} \le M_1^\frac12.
\]
Similarly, we have $\overline{B}\le M_2^\frac12$. Hence we arrive at
\[
\left\Vert AB - C \right\Vert_{\mathrm L^2(\Omega)} 
\ge \frac12 \left\Vert \overline{A}\, \overline{B} - C \right\Vert_{\mathrm L^2(\Omega)} - 3 M_1^\frac12 \left\Vert \delta_B \right\Vert^2_{\mathrm L^2(\Omega)} - 3 M_2^\frac12 \left\Vert \delta_A \right\Vert^2_{\mathrm L^2(\Omega)} - 3 \left\Vert \delta_A\, \delta_B \right\Vert^2_{\mathrm L^2(\Omega)}.
\]
Employing the H\"older inequality in the last three terms in the above lower bound leads to
\[
\left\Vert AB - C \right\Vert_{\mathrm L^2(\Omega)} 
\ge \frac12 \left\Vert \overline{A}\, \overline{B} - C \right\Vert_{\mathrm L^2(\Omega)} 
- 3 M_1^\frac12 \left\vert \Omega \right\vert^\frac23 \left\Vert \delta_B \right\Vert^2_{\mathrm L^6(\Omega)}
- 3 M_2^\frac12 \left\vert \Omega \right\vert^\frac23 \left\Vert \delta_A \right\Vert^2_{\mathrm L^6(\Omega)}
- 3 \left\Vert \delta_A^2 \right\Vert_{\mathrm L^\frac32(\Omega)} \left\Vert \delta_B \right\Vert^2_{\mathrm L^6(\Omega)}
\]
Using the algebraic identity $(p-q)^2\le 2(p^2+q^2)$ in the last term of the above lower bound, we obtain
\begin{align*}
\left\Vert AB - C \right\Vert_{\mathrm L^2(\Omega)} 
\ge \frac12 \left\Vert \overline{A}\, \overline{B} - C \right\Vert_{\mathrm L^2(\Omega)} 
& - 3 M_1^\frac12 \left\vert \Omega \right\vert^\frac23 \left\Vert \delta_B \right\Vert^2_{\mathrm L^6(\Omega)}
- 3 M_2^\frac12 \left\vert \Omega \right\vert^\frac23 \left\Vert \delta_A \right\Vert^2_{\mathrm L^6(\Omega)}
\\
& - 6 \left\Vert A^2 + \overline{A}^2 \right\Vert_{\mathrm L^\frac32(\Omega)} \left\Vert \delta_B \right\Vert^2_{\mathrm L^6(\Omega)}.
\end{align*}
By employing triangular inequality in the last term of the above lower bound, we arrive at
\begin{align*}
\left\Vert AB - C \right\Vert_{\mathrm L^2(\Omega)} 
& \ge \frac12 \left\Vert \overline{A}\, \overline{B} - C \right\Vert_{\mathrm L^2(\Omega)} 
- \left\vert\Omega \right\vert^\frac23 \left( 3 M_1^\frac12 + 6M_1\right) \left\Vert \delta_B \right\Vert^2_{\mathrm L^6(\Omega)}
- 3 M_2^\frac12 \left\vert \Omega \right\vert^\frac23 \left\Vert \delta_A \right\Vert^2_{\mathrm L^6(\Omega)}
\\
& \qquad \qquad \qquad \qquad - 6 \left\Vert a \right\Vert_{\mathrm L^\frac32(\Omega)} \left\Vert \delta_B \right\Vert^2_{\mathrm L^6(\Omega)}
\\
& \ge \frac12 \left\Vert \overline{A}\, \overline{B} - C \right\Vert_{\mathrm L^2(\Omega)} 
- \left( \left\vert\Omega \right\vert^\frac23 \left( 3 M_1^\frac12 + 6M_1\right) + 6 \ell_3 \left(1+t\right)^\frac13 \right) \left\Vert \delta_B \right\Vert^2_{\mathrm L^6(\Omega)}
- 3 M_2^\frac12 \left\vert \Omega \right\vert^\frac23 \left\Vert \delta_A \right\Vert^2_{\mathrm L^6(\Omega)},
\end{align*}
thanks to the estimate on $\left\Vert a\right\Vert_{\mathrm L^\frac32(\Omega)}$ from Lemma \ref{L^p estimate d_c}. Now we claim that
\[
\left\Vert \overline{A}\, \overline{B} - C \right\Vert_{\mathrm L^2(\Omega)} \ge \left\Vert C - \overline{C} \right\Vert_{\mathrm L^2(\Omega)}.
\]
To see this, factorising $C$ as $\overline{A}\, \overline{B}(1+\mu(x))$, we get
\[
\left\Vert \overline{A}\, \overline{B} - C \right\Vert^2_{\mathrm L^2(\Omega)} =  \overline{A}^2 \,  \overline{B}^2\, \overline{\mu^2}\, \left\vert \Omega\right\vert
\qquad
\mbox{ and }
\qquad
\left\Vert C - \overline{C} \right\Vert^2_{\mathrm L^2(\Omega)} = \overline{A}^2 \,  \overline{B}^2 \left\Vert \mu - \overline{\mu}\right\Vert^2_{\mathrm L^2(\Omega)} \le \overline{A}^2 \,  \overline{B}^2\, \overline{\mu^2}\, \left\vert \Omega\right\vert.
\]	
Putting it all together, we arrive at
\[
\left\Vert AB - C \right\Vert_{\mathrm L^2(\Omega)} 
\ge \frac12 \left\Vert \delta_C \right\Vert^2_{\mathrm L^2(\Omega)}
- \left( \left\vert\Omega \right\vert^\frac23 \left( 3 M_1^\frac12 + 6M_1\right) + 6 \ell_3 \left(1+t\right)^\frac13 \right) \left\Vert \delta_B \right\Vert^2_{\mathrm L^6(\Omega)}
- 3 M_2^\frac12 \left\vert \Omega \right\vert^\frac23 \left\Vert \delta_A \right\Vert^2_{\mathrm L^6(\Omega)}.
\]
Hence it follows from \eqref{eq:dc-0-D-L6-bis} that
\begin{equation}\label{eq:dc-0-D-L6-ter}
\begin{aligned}
D(a,b,c) \geq \left( \frac{4d_a}{P(\Omega)} - 3 \eta\, M_2^\frac12 \left\vert \Omega \right\vert^\frac23 \right)& \left\Vert \delta_A \right\Vert^2_{\mathrm L^6(\Omega)} 
+ \frac{\eta}{2} \left\Vert \delta_C \right\Vert^2_{\mathrm L^2(\Omega)}
\\
& + \left( \frac{4d_b}{P(\Omega)} - \eta\, \left( \left\vert\Omega \right\vert^\frac23 \left( 3 M_1^\frac12 + 6M_1\right) + 6 \ell_3 \left(1+t\right)^\frac13 \right) \right) \left\Vert \delta_B \right\Vert^2_{\mathrm L^6(\Omega)}.
\end{aligned}
\end{equation}
Let us take
\[
\eta(t) := \frac{4\min\left\{d_a, d_b, 1\right\}}{P(\Omega) \left(3 M_2^\frac12 \left\vert \Omega \right\vert^\frac23 + \left\vert\Omega \right\vert^\frac23 \left( 3 M_1^\frac12 + 6M_1\right) + 6 \ell_3 \right) + 1} (1+t)^{-\frac13}.
\]
Then, it follows from \eqref{eq:dc-0-D-L6-ter} that the dissipation functional has the following lower bound:
\begin{align}\label{eq:dc-0-D-L6-quad}
D(a,b,c) \ge \frac12 \eta(t) \left\Vert \delta_C \right\Vert^2_{\mathrm L^2(\Omega)}.
\end{align}
Observe that the above choice of $\eta$ clearly satisfies $0\le \eta \le 4$. To conclude our proof, we write
\begin{align*}
D(a,b,c) & = \frac12 D(a,b,c) + \frac12 D(a,b,c)
\\
& \ge \frac{2d_a\left\vert\Omega\right\vert^{-\frac{2}{3}}}{P(\Omega)} \left\Vert \delta_A \right\Vert^2_{\mathrm L^2(\Omega)} 
+ \frac{2d_b\left\vert\Omega\right\vert^{-\frac{2}{3}}}{P(\Omega)} \left\Vert \delta_B \right\Vert^2_{\mathrm L^2(\Omega)}
+ \frac14 \eta(t) \left\Vert \delta_C \right\Vert^2_{\mathrm L^2(\Omega)},
\end{align*}
where we have used the lower bound from \eqref{eq:dc-0-D-L6-bis} for the first term while the lower bound from \eqref{eq:dc-0-D-L6-quad} for the second term. Hence we have proved \eqref{eq:main-relatio-dc} with the constant $\mathcal{K}_c$ given by
\[
\mathcal{K}_c := \min \left\{ \frac{2d_a\left\vert\Omega\right\vert^{-\frac{2}{3}}}{P(\Omega)}, \frac{2d_b\left\vert\Omega\right\vert^{-\frac{2}{3}}}{P(\Omega)}, \frac{\min\left\{d_a, d_b, 1\right\}}{P(\Omega) \left(3 M_2^\frac12 \left\vert \Omega \right\vert^\frac23 + \left\vert\Omega \right\vert^\frac23 \left( 3 M_1^\frac12 + 6M_1\right) + 6 \ell_3 \right) + 1}\right\}.
\]
\end{proof}
Thanks to the lower bound in \eqref{eq:main-relatio-dc}, we can derive a sub-exponential decay estimate for the relative entropy.
\begin{Prop}\label{decay 1 d_c}
Let $N\leq3$ and let $(a,b,c)$ be the solution to the degenerate system \eqref{eq:model 11}. Let $(a_\infty,b_\infty,c_\infty)$ be the associated equilibrium state given by \eqref{eq:equi-state-1}-\eqref{eq:equi-state-2}. Then, for any given positive $\varepsilon \ll1$, there exists a time $T_{\varepsilon}$ and two positive constants $\mathcal{S}_5$ and $\mathcal{S}_6$ such that
\begin{align}\label{eq:sub-exp-decay-relative-entropy-dc}
E(a,b,c)- E(a_{\infty},b_{\infty},c_{\infty})
\leq \mathcal{S}_5 \, e^{-\mathcal{S}_6(1+t)^{\frac{2-\epsilon}{3}}} \qquad \mbox{ for }\, t\ge T_\varepsilon.
\end{align}
\end{Prop}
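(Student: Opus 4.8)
The plan is to mimic closely the argument used for Proposition \ref{decay 1}, the only structural change being the different time exponent coming from the lower bound \eqref{eq:main-relatio-dc}. First I would express the relative entropy through the function $\Phi$, writing
\[
E(a,b,c)-E(a_{\infty},b_{\infty},c_{\infty})= \int_{\Omega} \Phi(a,a_{\infty})(A-A_{\infty})^2 + \int_{\Omega} \Phi(b,b_{\infty})(B-B_{\infty})^2 + \int_{\Omega} \Phi(c,c_{\infty})(C-C_{\infty})^2,
\]
and then invoke the pointwise bound \eqref{eq:Gamma-bound} on $\Phi$ together with the polynomial-in-time supremum bounds on $a,b,c$ from Proposition \ref{L^infty d_c}. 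Because $\ln(\Vert a\Vert_{\mathrm L^\infty(\Omega)}+1)$ and its analogues grow at most like $\ln(1+t)$, this yields
\[
E(a,b,c)-E(a_{\infty},b_{\infty},c_{\infty}) \le C_1\big(1+\ln(1+t)\big)\Big(\Vert A-A_\infty\Vert^2_{\mathrm L^2(\Omega)}+\Vert B-B_\infty\Vert^2_{\mathrm L^2(\Omega)}+\Vert C-C_\infty\Vert^2_{\mathrm L^2(\Omega)}\Big)
\]
for all $t\ge0$, with $C_1$ depending only on the data, $\Omega$ and the equilibrium state.

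Next I would apply the bound \eqref{eq:relative-entropy-bd-2} of \cite{DF06}, which controls the $\mathrm L^2$ deviations $\Vert A-A_\infty\Vert^2_{\mathrm L^2(\Omega)}$, etc., by the sum $\Vert\delta_A\Vert^2_{\mathrm L^2(\Omega)}+\Vert\delta_B\Vert^2_{\mathrm L^2(\Omega)}+\Vert\delta_C\Vert^2_{\mathrm L^2(\Omega)}+\Vert AB-C\Vert^2_{\mathrm L^2(\Omega)}$, a step that uses only the conservation laws \eqref{mass conserve 1}--\eqref{mass conserve 2}. I would then bound this right-hand side by the dissipation: the term $\Vert AB-C\Vert^2_{\mathrm L^2(\Omega)}$ is absorbed using $D\ge 4\Vert AB-C\Vert^2_{\mathrm L^2(\Omega)}$ from \eqref{eq:dc-0-D-L6}, while the three deviation terms are controlled via the key lower bound \eqref{eq:main-relatio-dc} of Proposition \ref{missing theorem}, namely $\Vert\delta_A\Vert^2_{\mathrm L^2(\Omega)}+\Vert\delta_B\Vert^2_{\mathrm L^2(\Omega)}+\Vert\delta_C\Vert^2_{\mathrm L^2(\Omega)}\le \mathcal{K}_c^{-1}(1+t)^{\frac13}D(a,b,c)$. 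Combining these gives
\[
E(a,b,c)-E(a_{\infty},b_{\infty},c_{\infty}) \le C_3\big(1+\ln(1+t)\big)(1+t)^{\frac13}\,D(a,b,c).
\]

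Finally, for any fixed $\varepsilon\ll1$ I would choose $T_\varepsilon$ so that $\ln(1+t)\le (1+t)^{\varepsilon/3}$ for $t\ge T_\varepsilon$, absorbing the logarithm into a slightly larger power to obtain $E(a,b,c)-E(a_{\infty},b_{\infty},c_{\infty})\le C_4(1+t)^{\frac{1+\varepsilon}{3}}D(a,b,c)$. Using the entropy identity $\frac{{\rm d}}{{\rm d}t}\big(E(a,b,c)-E(a_{\infty},b_{\infty},c_{\infty})\big)=-D(a,b,c)$ then produces the differential inequality $\frac{{\rm d}}{{\rm d}t}\big(E-E_\infty\big)\le -C_4^{-1}(1+t)^{-\frac{1+\varepsilon}{3}}\big(E-E_\infty\big)$, and a Gr\"onwall integration over $(T_\varepsilon,t)$ yields the claim, since $\int(1+s)^{-\frac{1+\varepsilon}{3}}{\rm d}s$ is proportional to $(1+t)^{\frac{2-\varepsilon}{3}}$; this fixes the rate $\mathcal{S}_6$ and, together with the monotonicity of the relative entropy, the prefactor $\mathcal{S}_5$. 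Since every ingredient is already in place, there is no genuine obstacle; the only point requiring care is the bookkeeping of the time exponents, in particular verifying that the $(1+t)^{\frac13}$ weight furnished by \eqref{eq:main-relatio-dc}, which is strictly better than the $(1+t)^{\frac56}$ weight available in the $d_b=0$ case of Proposition \ref{2.0.2}, is precisely what upgrades the decay exponent from $\frac{1-\varepsilon}{6}$ to $\frac{2-\varepsilon}{3}$.
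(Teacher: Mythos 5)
Your proposal is correct and is precisely the argument the paper intends: the paper skips the proof of this proposition, stating it is ``exactly similar'' to that of Proposition \ref{decay 1}, and your write-up is exactly that adaptation, replacing the weight $(1+t)^{\frac{N-2}{N-1}}$ by the $(1+t)^{\frac13}$ weight from Proposition \ref{missing theorem} and carrying out the same $\Phi$-bound, \cite{DF06} estimate, logarithm absorption, and Gr\"onwall integration. The exponent bookkeeping $1-\frac{1+\varepsilon}{3}=\frac{2-\varepsilon}{3}$ is also correct.
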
 
The proof of the above proposition is exactly similar to the proof of Proposition \ref{decay 1}. Hence we skip the proof. Note that the constants $\mathcal{S}_5$ and $\mathcal{S}_6$ appearing in \eqref{eq:sub-exp-decay-relative-entropy-dc} depend on the constant $\mathcal{K}_c$ (appearing in Proposition \ref{eq:main-relatio-dc}), the constants $K_\infty$ and $\mu$ (appearing in Proposition \ref{L^infty d_c}). Finally, we are all equipped to prove our main result of this section.
\begin{proof}[Proof of Theorem \ref{convergence theorem 2}]
We have already obtained sub-exponential decay (in time) of the relative entropy \eqref{eq:sub-exp-decay-relative-entropy-dc} in Proposition \ref{decay 1 d_c}. Hence the sub-exponential decay in the $\mathrm L^1$-norm is a direct consequence of the following Czisz\'ar-Kullback-Pinsker type inequality that relates relative entropy and the $\mathrm L^1$-norm:
\begin{align*}
E(a,b,c) - E(a_{\infty},b_{\infty},c_{\infty}) \geq
\frac{\left(3+2\sqrt{2}\right)\vert \Omega \vert}{2M_{1}\left(9+2\sqrt{2}\right)} & \left\Vert a - a_\infty\right\Vert^2_{\mathrm L^1(\Omega)}
+ \frac{\left(3+2\sqrt{2}\right)\vert \Omega \vert}{2M_{1}\left(9+2\sqrt{2}\right)}\left\Vert b - b_\infty\right\Vert^2_{\mathrm L^1(\Omega)}
\\
& + \frac{\left(3+2\sqrt{2}\right)\vert \Omega \vert}{\left(M_{1}+M_{2}\right)\left(9+2\sqrt{2}\right)}\left\Vert c - c_\infty\right\Vert^2_{\mathrm L^1(\Omega)}.
\end{align*}
The proof of the above functional inequality is available in \cite[Lemma 3.3, p.173]{DF06} which exploits the conservation properties \eqref{mass conserve 1} and \eqref{mass conserve 2}.
\end{proof}

\appendix

\section{some useful results}\label{sec:app}
\begin{Lem}[Poincar\'e-Wirtinger inequality]\label{Poincare-Wirtinger}
There exists a positive constant $P(\Omega)$, depending only on $\Omega$ and $q$, such that
\[
P(\Omega) \left\Vert \nabla f\right\Vert^2_{\mathrm L^2(\Omega)} \ge \left\Vert f - \overline{f} \right\Vert^2_{\mathrm L^q(\Omega)} \qquad \mbox{ for all }f\in\mathrm H^1(\Omega),
\]
where
\begin{equation*}
q = 
\left\{
\begin{aligned}
\frac{2N}{N-2} & \qquad \mbox{ for }\, \, N \ge 3,
\\
\in [1,\infty) & \qquad \mbox{ for }\, \, N=2,
\\
\in [1,\infty)\cup\{\infty\} & \qquad \mbox{ for }\, \, N=1.
\end{aligned}
\right.
\end{equation*}
We refer to $P(\Omega)$ as the Poincar\'e constant.
\end{Lem}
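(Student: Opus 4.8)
The plan is to derive the stated inequality by combining the classical $\mathrm L^2$ Poincar\'e--Wirtinger inequality with the Sobolev embedding $\mathrm H^1(\Omega)\hookrightarrow\mathrm L^q(\Omega)$, where $q$ is the critical exponent $\frac{2N}{N-2}$ in high dimensions (or any admissible finite/$\infty$ exponent in low dimensions). Throughout I set $g := f-\overline{f}$, so that $g\in\mathrm H^1(\Omega)$ has vanishing average $\overline{g}=0$ and $\nabla g=\nabla f$; the whole point of subtracting the mean is that it renders the base Poincar\'e constant finite.

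First I would record the base inequality: there is a constant $C_P=C_P(\Omega)>0$ such that
\[
\left\Vert g\right\Vert_{\mathrm L^2(\Omega)} \le C_P \left\Vert \nabla g\right\Vert_{\mathrm L^2(\Omega)}
\qquad\text{for all }g\in\mathrm H^1(\Omega)\text{ with }\overline{g}=0.
\]
This is the standard Poincar\'e--Wirtinger inequality on a bounded connected domain with Lipschitz (here even $\mathrm C^{2+\nu}$) boundary. The cleanest justification is by contradiction: normalizing a putative violating sequence in $\mathrm L^2$ with gradients tending to zero, one extracts a strongly convergent subsequence via the Rellich--Kondrachov compact embedding $\mathrm H^1(\Omega)\hookrightarrow\hookrightarrow\mathrm L^2(\Omega)$; the limit is forced to be a constant of unit $\mathrm L^2$ norm and zero average, which is impossible.

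Next I would invoke the Sobolev embedding theorem. Since $\Omega$ is bounded with smooth boundary, for $N\ge 3$ one has $\mathrm H^1(\Omega)\hookrightarrow\mathrm L^{2^*}(\Omega)$ with $2^*=\frac{2N}{N-2}$; for $N=2$ one has $\mathrm H^1(\Omega)\hookrightarrow\mathrm L^q(\Omega)$ for every finite $q$; and for $N=1$ one has $\mathrm H^1(\Omega)\hookrightarrow\mathrm L^\infty(\Omega)\hookrightarrow\mathrm L^q(\Omega)$. These are precisely the ranges of $q$ stated in the lemma, and they supply a constant $C_S=C_S(\Omega,q)>0$ with
\[
\left\Vert g\right\Vert_{\mathrm L^q(\Omega)} \le C_S\left( \left\Vert g\right\Vert_{\mathrm L^2(\Omega)} + \left\Vert \nabla g\right\Vert_{\mathrm L^2(\Omega)}\right).
\]

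Finally I would chain the two estimates: substituting the base inequality into the Sobolev bound gives
\[
\left\Vert g\right\Vert_{\mathrm L^q(\Omega)} \le C_S\left(1+C_P\right)\left\Vert \nabla g\right\Vert_{\mathrm L^2(\Omega)},
\]
and squaring (recalling $\nabla g=\nabla f$) yields the claim with $P(\Omega):=C_S^2\left(1+C_P\right)^2$, which depends only on $\Omega$ and $q$ as asserted. There is no genuine obstacle here; the result is classical. The only points warranting care are that the two ingredients must be applied to $g=f-\overline f$ rather than to $f$, and that the admissible exponent $q$ in each dimension is dictated exactly by the Sobolev embedding. I would also note that the $\mathrm C^{2+\nu}$ regularity of $\partial\Omega$ assumed in the paper is far stronger than necessary: boundedness together with a Lipschitz boundary already suffices for both the compact embedding underpinning $C_P$ and the Sobolev embedding furnishing $C_S$.
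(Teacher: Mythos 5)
The paper does not prove this lemma at all: it is recorded in Appendix~\ref{sec:app} as classical background, with $P(\Omega)$ simply named as ``the Poincar\'e constant,'' so there is no in-paper argument to compare yours against. Your proof is correct and is the standard one: reduce to the mean-zero function $g=f-\overline{f}$, establish the $\mathrm L^2$ Poincar\'e--Wirtinger inequality by the compactness (Rellich--Kondrachov) contradiction argument, invoke the Sobolev embedding $\mathrm H^1(\Omega)\hookrightarrow\mathrm L^q(\Omega)$ with exactly the stated dimension-dependent ranges of $q$, and chain the two bounds to get $P(\Omega)=C_S^2(1+C_P)^2$ depending only on $\Omega$ and $q$. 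Your closing remark is also accurate: boundedness, connectedness, and a Lipschitz boundary suffice for both ingredients, so the paper's $\mathrm C^{2+\nu}$ regularity is not needed for this particular lemma (it is used elsewhere, e.g.\ for the Green's function and maximal regularity estimates).
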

\begin{Thm}[Second Order Regularity and Integrability estimation] \label{estimation 1}
Let $d>0$ and let $\tau\in[0,T)$. Take $\theta \in \mathrm L^p(\Omega_{\tau,T})$ for some $1<p< +\infty$. Let $\psi$ be the solution to the backward heat equation:
\begin{equation*}
\left\{
\begin{aligned}
\partial_t \psi + d \Delta \psi & = -\theta \qquad \mbox{ for }(t,x)\in\Omega_{\tau,T},
\\
\nabla \psi \cdot n(x) & = 0 \qquad \ \ \mbox{ for }(t,x)\in[\tau,T]\times\partial\Omega,
\\
\psi(T,x) & = 0 \qquad \ \  \mbox{ for }x\in\Omega.
\end{aligned}
\right.
\end{equation*}
Then, there exists a positive constant $C_{SOR}$, depending only on the domain $\Omega$, the dimension $N$ and the exponent $p$ such that the following maximal regularity holds:
\begin{align}\label{eq:estimate-laplacian}
\Vert \Delta \psi \Vert_{\mathrm L^p(\Omega _{\tau,T})} \leq \frac{C_{SOR}}{d} \Vert \theta \Vert_{\mathrm L^p(\Omega_{\tau,T})}.
\end{align}
Moreover, if \ $\theta\geq 0$ then $\psi(t,x)\geq 0$ for almost every $(t,x)\in\Omega_{\tau,T}$. Furthermore, we have
\begin{equation}\label{eq:Ls-estimates}
\begin{aligned}
\mbox{If }\, & p<\frac{N+2}{2}\, \mbox{ then } \qquad \left\Vert \psi \right\Vert_{\mathrm L^s(\Omega_{\tau,T})} \le C_{IE} \left\Vert \theta \right\Vert_{\mathrm L^p(\Omega_{\tau,T})} \qquad \mbox{ for all }\, \, s < \frac{(N+2)p}{N+2-2p}
\\
\mbox{If }\, & p=\frac{N+2}{2}\, \mbox{ then } \qquad \left\Vert \psi \right\Vert_{\mathrm L^s(\Omega_{\tau,T})} \le C_{IE} \left\Vert \theta \right\Vert_{\mathrm L^p(\Omega_{\tau,T})} \qquad \mbox{ for all }\, \, s < \infty
\end{aligned}
\end{equation}
where the constant $C_{IE}=C_{IE}(T-\tau, \Omega,d,p,s)$ and
\begin{align}\label{eq:Linfty-estimate}
\mbox{if }\, & p>\frac{N+2}{2}\, \mbox{ then } \qquad\left\Vert \psi \right\Vert_{\mathrm L^\infty(\Omega _{\tau,T})} \leq  C_{IE} \left\Vert \theta \right\Vert_{\mathrm L^p(\Omega_{\tau,T})}.
\end{align}
\end{Thm}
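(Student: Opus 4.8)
The plan is to first strip away the backward/terminal structure by a time reversal, and then treat the three assertions — the second-order maximal regularity \eqref{eq:estimate-laplacian}, the positivity, and the integrability estimates \eqref{eq:Ls-estimates}--\eqref{eq:Linfty-estimate} — separately. The first two follow from classical linear parabolic theory, while the last is a consequence of the Gaussian kernel bounds already recalled in the excerpt (these three estimates are in fact precisely \cite[Lemma 3.3]{CDF14}, which one may cite directly). First I would set $u(t,x) := \psi(\tau+T-t,x)$, so that $u$ solves the \emph{forward} homogeneous Neumann heat equation $\partial_t u - d\Delta u = \tilde\theta$ on $\Omega_{\tau,T}$, with $\tilde\theta(t,x):=\theta(\tau+T-t,x)$, vanishing initial datum $u(\tau,\cdot)=0$, and $\|\tilde\theta\|_{\mathrm L^p(\Omega_{\tau,T})}=\|\theta\|_{\mathrm L^p(\Omega_{\tau,T})}$. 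Since the Laplacian is invariant under this reflection, every bound proved for $u$ transfers verbatim to $\psi$.

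For the second-order estimate I would invoke the maximal $\mathrm L^p$-regularity of the Neumann Laplacian: for $1<p<\infty$ the operator $-d\Delta$ with homogeneous Neumann boundary conditions generates a bounded analytic semigroup on $\mathrm L^p(\Omega)$ and enjoys maximal regularity, giving a constant $C=C(\Omega,N,p)$, \emph{independent of} $d$, with $\|\partial_t u\|_{\mathrm L^p(\Omega_{\tau,T})}+d\,\|\Delta u\|_{\mathrm L^p(\Omega_{\tau,T})}\le C\,\|\tilde\theta\|_{\mathrm L^p(\Omega_{\tau,T})}$. The $d$-independence of $C$ is the scaling-invariant form of the estimate, obtained by rescaling the time variable $t\mapsto dt$ to reduce to the case $d=1$; tracking the powers of $d$ in the $\mathrm L^p$-norms and dividing by $d$ then yields \eqref{eq:estimate-laplacian} with $C_{SOR}=C$. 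Positivity is immediate from the parabolic maximum principle applied to the forward problem: with $\tilde\theta\ge0$ and zero initial datum, the comparison principle forces $u\ge0$ a.e., hence $\psi\ge0$.

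For the integrability estimates I would use the Duhamel representation $u(t,\cdot)=\int_\tau^t e^{(t-s)d\Delta}\tilde\theta(s,\cdot)\,\mathrm ds$, together with the Gaussian bound recalled from \cite{Morra83,ML15}, namely $0\le G_d(t,s,x,y)\le \texttt{C}_{H}(t-s)^{-N/2}e^{-\kappa|x-y|^2/(t-s)}=:g(t-s,x-y)$. This gives the pointwise domination $|u(t,x)|\le\int_\tau^t\big(g(t-s,\cdot)*|\tilde\theta(s,\cdot)|\big)(x)\,\mathrm ds$ together with the kernel bound $\|g(r,\cdot)\|_{\mathrm L^m(\Omega)}\le C\,r^{-\frac{N}{2}(1-\frac1m)}$ for every $m\in[1,\infty]$. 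Applying Young's convolution inequality in the space variable and then estimating the resulting time convolution by the Hardy--Littlewood--Sobolev inequality produces exactly the parabolic Sobolev embedding, in which the effective dimension is the parabolic dimension $N+2$: the critical threshold is $p=\tfrac{N+2}{2}$, the subcritical range $p<\tfrac{N+2}{2}$ yielding $\mathrm L^s$ for $s<\tfrac{(N+2)p}{N+2-2p}$, the critical case yielding every finite $s$, and the supercritical range $p>\tfrac{N+2}{2}$ yielding the $\mathrm L^\infty$ bound \eqref{eq:Linfty-estimate}. The bounded time horizon $T-\tau$ makes all the arising time integrals finite and is the source of the $(T-\tau)$-dependence of $C_{IE}$.

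The steps I expect to be most delicate are pinning down the $d$-independence of the second-order constant and the bookkeeping of the borderline exponents in the integrability estimates. The Hardy--Littlewood--Sobolev argument is scale-critical exactly at $p=\tfrac{N+2}{2}$, so the endpoint must be excluded and one recovers only $s$ \emph{strictly} below the Sobolev exponent, which is precisely why the statement is phrased with strict inequalities in \eqref{eq:Ls-estimates}.
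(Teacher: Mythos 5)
Your proposal is correct, but it takes a genuinely different route from the paper for a simple reason: the paper contains no proof of this theorem at all. It is an appendix statement justified entirely by citation, with \eqref{eq:estimate-laplacian} attributed to \cite[Theorem 1]{Lam87}, the finite-exponent estimates \eqref{eq:Ls-estimates} to \cite[Lemma 3.3]{CDF14}, and the supremum bound \eqref{eq:Linfty-estimate} to \cite[Lemma 4.6]{Tan18} (so your aside that all three estimates are ``precisely'' \cite[Lemma 3.3]{CDF14} is slightly off: that lemma covers only \eqref{eq:Ls-estimates}). What you wrote is in effect a self-contained reconstruction of those cited results, and each step is sound: the time reversal is harmless; maximal $\mathrm L^p$-regularity of the Neumann heat semigroup combined with the parabolic rescaling $t\mapsto dt$ does produce the $d$-independent constant $C_{SOR}$, which is exactly the content of Lamberton's theorem; the comparison principle gives positivity; and Duhamel plus the Gaussian kernel bound plus Young/Hardy--Littlewood--Sobolev gives the parabolic embedding with the stated exponents. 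Two details deserve attention if you expand the sketch. First, after rescaling, the time horizon becomes $d(T-\tau)$, so you need the $d=1$ maximal-regularity constant to be uniform in the length of the time interval (true for zero terminal data, by extension by zero and causality, but this is precisely where the claim that $C_{SOR}$ depends only on $\Omega$, $N$, $p$ is decided, so it should be said). Second, your closing remark about Hardy--Littlewood--Sobolev is backwards: the strong-type HLS inequality is valid \emph{at} the critical exponent $s=\frac{(N+2)p}{N+2-2p}$ whenever $1<p$ and $s<\infty$, so your argument actually proves slightly more than is claimed; the strict inequalities in \eqref{eq:Ls-estimates} reflect the lossier but more elementary route of applying space-time Young's inequality with the kernel bound $\left\Vert g\right\Vert_{\mathrm L^z(\Omega_{\tau,T})}<\infty$ only for $z<\frac{N+2}{N}$, which is the argument used in \cite{CDF14}. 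Neither point is a gap. What your approach buys is a self-contained appendix with the $d$-dependence of every constant made transparent; what the paper's approach buys is brevity and precise attribution to the literature.
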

The proof of \eqref{eq:estimate-laplacian} can be found in \cite[Theorem 1]{Lam87}. We refer to the constant $C_{SOR}$ as the second order regularity constant. Proof of the estimates \eqref{eq:Ls-estimates} can be found in \cite[Lemma 3.3]{CDF14} and the estimate \eqref{eq:Linfty-estimate} was derived in \cite[Lemma 4.6]{Tan18}. We refer to the constant $C_{IE}$ as the integrability estimation constant.
\begin{Thm}[$p^{\textrm{th}}$ order integrability estimation]\label{PE}
Let $p\in(2,\infty)$ and let $p'$ be its H\"older conjugate. Let $M(t,x)$ be such that the following holds
\[
\theta\leq M(t,x) \leq \Theta \quad \forall (t,x)\in\Omega_T,
\]
for some fixed positive constants $\theta,\Theta$. Let $\psi_0\in\mathrm L^p(\Omega)$ and let $\psi$ be a weak solution to
\begin{equation*}
\left\{
\begin{aligned}
\partial_t \psi - \Delta \left( M \psi\right) & = 0 \qquad \qquad \mbox{ for }(t,x)\in\Omega_T,
\\
\nabla \psi \cdot n(x) & = 0 \qquad \qquad \mbox{ for }(t,x)\in[0,T]\times\partial\Omega,
\\
\psi(0,x) & = \psi_0(x) \qquad \mbox{ for }x\in \Omega.
\end{aligned}
\right.
\end{equation*}
Then the following estimate holds
\[
\left\Vert \psi \right\Vert_{\mathrm L^p(\Omega_T)} \le \left( 1 + \Theta K_{\theta,\Theta,p'}\right) T^\frac{1}{p} \left\Vert \psi_0 \right\Vert_{\mathrm L^p(\Omega)},
\]
where the constant $K_{\theta,\Theta,p'}$ is given by
\[
K_{\theta,\Theta,p'} := \frac{C^{PRC}_{\frac{\theta+\Theta}{2},p'}\left(\frac{\Theta-\theta}{2}\right)}{1 - C^{PRC}_{\frac{\theta+\Theta}{2},p'}\left(\frac{\Theta-\theta}{2}\right)}
\qquad
\mbox{ provided we have }
\qquad
C^{PRC}_{\frac{\theta+\Theta}{2},p'}\left(\frac{\Theta-\theta}{2}\right) < 1.
\]
Here, the constant $C^{PRC}_{r,p'}$ is the best constant in the following parabolic regularity estimate:
\[
\left\Vert \Delta \phi \right\Vert_{\mathrm L^{p'}(\Omega_T)} \le C^{PRC}_{r,p'} \left\Vert f \right\Vert_{\mathrm L^{p'}(\Omega_T)},
\]
where $\phi,f:[0,T]\times\Omega\to\mathbb{R}$ are any two functions such that $f\in \mathrm L^{p'}(\Omega_T)$ and they satisfy
\begin{equation*}
\left\{
\begin{aligned}
\partial_t \phi + r \Delta \phi & = f \qquad \mbox{ for }(t,x)\in\Omega_T,
\\
\nabla \phi \cdot n(x) & = 0 \qquad \mbox{ for }(t,x)\in[0,T]\times\partial\Omega,
\\
\phi(T,x) & = 0 \qquad \mbox{ for }x\in\Omega.
\end{aligned}
\right.
\end{equation*}
\end{Thm}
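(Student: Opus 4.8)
The plan is to run the duality (adjoint-problem) method of Ca\~nizo--Desvillettes--Fellner \cite{CDF14}. The starting point is the dual characterisation
\[
\left\Vert \psi \right\Vert_{\mathrm L^p(\Omega_T)} = \sup\left\{ \int_{\Omega_T} \psi H\, {\rm d}x\, {\rm d}t \; : \; H\in \mathrm L^{p'}(\Omega_T),\ \left\Vert H \right\Vert_{\mathrm L^{p'}(\Omega_T)}\le 1 \right\},
\]
so that it suffices to bound $\int_{\Omega_T}\psi H$ for a fixed (by density, smooth) test function $H$. Since the operator $\psi\mapsto \Delta(M\psi)$ has $(t,x)$-dependent coefficients, I cannot apply a clean maximal-regularity estimate to it directly; the device is to \emph{freeze} the diffusion at the average value $r:=\frac{\theta+\Theta}{2}$, which is precisely the choice minimising the sup-norm of the perturbation, namely $\left\Vert M-r\right\Vert_{\mathrm L^\infty(\Omega_T)}\le \frac{\Theta-\theta}{2}$, and then treat $M-r$ as a small correction.

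Concretely, I would first introduce the constant-coefficient adjoint problem: let $\phi$ solve $\partial_t\phi + r\Delta\phi = -H$ in $\Omega_T$ with $\nabla\phi\cdot n=0$ on the lateral boundary and $\phi(T,\cdot)=0$; maximal $\mathrm L^{p'}$-regularity guarantees $\Delta\phi,\partial_t\phi\in \mathrm L^{p'}(\Omega_T)$ and a well-defined trace $\phi(0,\cdot)$. Testing $\partial_t\psi-\Delta(M\psi)=0$ against $\phi$, integrating by parts twice in space (using the homogeneous Neumann conditions on both $\psi$ and $\phi$) and once in time (using $\phi(T,\cdot)=0$), and inserting the splitting $M\Delta\phi = r\Delta\phi + (M-r)\Delta\phi$ together with $\partial_t\phi+r\Delta\phi=-H$, yields the key identity
\[
\int_{\Omega_T} \psi H\, {\rm d}x\, {\rm d}t = \int_\Omega \psi_0\, \phi(0,\cdot)\, {\rm d}x + \int_{\Omega_T} (M-r)\, \psi\, \Delta\phi\, {\rm d}x\, {\rm d}t,
\]
in which the second term is exactly the error produced by freezing the coefficient.

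The two pieces are then estimated separately. For the error term, H\"older's inequality and $|M-r|\le \frac{\Theta-\theta}{2}$ give a bound by $\frac{\Theta-\theta}{2}\left\Vert\psi\right\Vert_{\mathrm L^p(\Omega_T)}\left\Vert\Delta\phi\right\Vert_{\mathrm L^{p'}(\Omega_T)}$, and the defining parabolic-regularity estimate of the constant-coefficient operator (with source $f=-H$) controls $\left\Vert\Delta\phi\right\Vert_{\mathrm L^{p'}(\Omega_T)}\le C^{PRC}_{\frac{\theta+\Theta}{2},p'}\left\Vert H\right\Vert_{\mathrm L^{p'}(\Omega_T)}$. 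For the main term, I write $\phi(0,\cdot)=\int_0^T\bigl(H+r\Delta\phi\bigr)(t,\cdot)\,{\rm d}t$ (from $-\partial_t\phi=H+r\Delta\phi$ and $\phi(T,\cdot)=0$) and bound its $\mathrm L^{p'}(\Omega)$-norm by Minkowski in space and H\"older in time, the latter producing the factor $T^{1/p}$; equivalently one uses the $\mathrm L^{p'}$-contractivity of the Neumann heat semigroup on the Duhamel representation of $\phi(0,\cdot)$. Combining these with $\int_\Omega\psi_0\phi(0)\le \left\Vert\psi_0\right\Vert_{\mathrm L^p(\Omega)}\left\Vert\phi(0,\cdot)\right\Vert_{\mathrm L^{p'}(\Omega)}$ and taking the supremum over $\left\Vert H\right\Vert_{\mathrm L^{p'}(\Omega_T)}\le 1$ produces an inequality of the form $\left\Vert\psi\right\Vert_{\mathrm L^p(\Omega_T)}\le (\text{const})\,T^{1/p}\left\Vert\psi_0\right\Vert_{\mathrm L^p(\Omega)} + C^{PRC}_{\frac{\theta+\Theta}{2},p'}\frac{\Theta-\theta}{2}\left\Vert\psi\right\Vert_{\mathrm L^p(\Omega_T)}$; absorbing the last term into the left-hand side yields the denominator $1 - C^{PRC}_{\frac{\theta+\Theta}{2},p'}\frac{\Theta-\theta}{2}$ and, after collecting the factors $\Theta$ and $\frac{\Theta-\theta}{2}$, the stated constant $\bigl(1+\Theta K_{\theta,\Theta,p'}\bigr)T^{1/p}$. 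The main obstacle — and the reason the hypothesis $C^{PRC}_{\frac{\theta+\Theta}{2},p'}\bigl(\frac{\Theta-\theta}{2}\bigr)<1$ (the quantitative closeness of $M$ to a constant, i.e.\ of $d_a$ to $d_c$) is indispensable — is precisely this absorption step: the frozen-coefficient remainder feeds $\left\Vert\psi\right\Vert_{\mathrm L^p(\Omega_T)}$ back into the estimate, and only a strictly sub-unit perturbation constant allows one to reabsorb it and close the argument.
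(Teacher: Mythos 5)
The paper itself contains no proof of Theorem \ref{PE}: it is quoted from \cite[Proposition 1.1]{CDF14}, so the comparison is really with the proof there and with the paper's own use of the same device inside Lemma \ref{L^p with time}. Your scheme --- dualise the $\mathrm L^p(\Omega_T)$ norm, freeze the diffusion at $r=\frac{\theta+\Theta}{2}$, invoke maximal $\mathrm L^{p'}$ regularity for the constant-coefficient backward problem, and absorb the frozen-coefficient remainder under the hypothesis $C^{PRC}_{r,p'}\frac{\Theta-\theta}{2}<1$ --- is exactly that method, and your key identity $\int_{\Omega_T}\psi H = \int_\Omega \psi_0\,\phi(0,\cdot) + \int_{\Omega_T}(M-r)\psi\,\Delta\phi$ is correct. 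The only structural difference from \cite{CDF14} is where the absorption happens: there the dual problem is solved with the variable coefficient $M$ itself and the smallness is used to control $\left\Vert \Delta\phi\right\Vert_{\mathrm L^{p'}(\Omega_T)}$ for that problem (the duality identity then carries no error term), whereas you solve the constant-coefficient dual problem and absorb the error into $\left\Vert\psi\right\Vert_{\mathrm L^p(\Omega_T)}$ on the primal side. That rearrangement is legitimate, needs the same hypothesis, and is in fact the arrangement the authors use themselves in Lemma \ref{L^p with time}.

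There are two gaps, one of substance. The substantive one is the final step: your argument does not produce the constant asserted in the statement, and the claim that it does (``after collecting the factors $\Theta$ and $\frac{\Theta-\theta}{2}$'') corresponds to no actual computation. The two bounds you offer for $\left\Vert \phi(0,\cdot)\right\Vert_{\mathrm L^{p'}(\Omega)}$ are not equivalent: Minkowski--H\"older on $\phi(0,\cdot)=\int_0^T(H+r\Delta\phi)\,{\rm d}t$ gives $T^{1/p}\bigl(1+rC^{PRC}_{r,p'}\bigr)\left\Vert H\right\Vert_{\mathrm L^{p'}(\Omega_T)}$, while semigroup contractivity gives $T^{1/p}\left\Vert H\right\Vert_{\mathrm L^{p'}(\Omega_T)}$. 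After absorption these yield, respectively, the constants $\frac{1+rC^{PRC}_{r,p'}}{1-C^{PRC}_{r,p'}\frac{\Theta-\theta}{2}}$ and $\frac{1}{1-C^{PRC}_{r,p'}\frac{\Theta-\theta}{2}}=1+K_{\theta,\Theta,p'}$; neither equals $1+\Theta K_{\theta,\Theta,p'}$. Since $1+K_{\theta,\Theta,p'}\le 1+\Theta K_{\theta,\Theta,p'}$ precisely when $\Theta\ge1$, the semigroup route proves the stated inequality (indeed a sharper one) when $\Theta\ge1$, but not when $\Theta<1$; as the entire content of this theorem is its explicit constant, you should either state and use the constant your argument actually delivers (Lemma \ref{L^p with time} only needs some finite, $T$-independent constant of this form, so nothing downstream breaks) or reproduce the bookkeeping of \cite{CDF14} that leads to $1+\Theta K_{\theta,\Theta,p'}$. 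The second, more routine gap: absorbing $C^{PRC}_{r,p'}\frac{\Theta-\theta}{2}\left\Vert\psi\right\Vert_{\mathrm L^p(\Omega_T)}$ into the left-hand side presupposes $\left\Vert\psi\right\Vert_{\mathrm L^p(\Omega_T)}<\infty$, which is not known a priori for a weak solution; one must run the argument on smooth approximations (smoothed $M$, smooth data) and pass to the limit, or restrict to the bounded solutions to which the theorem is actually applied. Finally, a cosmetic point: the integration by parts uses the flux condition $\nabla(M\psi)\cdot n=0$ (equivalently, the weak formulation itself), not the condition $\nabla\psi\cdot n=0$ as written.
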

It has to be noted that $C^{PRC}_{r,p^{'}}< \infty$ for $r>0$ and $C^{PRC}_{r,2} \leq \frac{1}{r}$\ and depends  only on $r,p^{'}$, the domain and on the dimension, i.e it is independent of time. Moreover, as $C^{PRC}_{r,p^{'}}< \infty$, if we take the difference between $\theta$ and $\Theta$ sufficiently small, then we have the required property that 
\[
C^{PRC}_{\frac{\theta+\Theta}{2},p'}\left(\frac{\Theta-\theta}{2}\right) < 1.
\]
Proof of the above theorem can be found in \cite[Proposition 1.1]{CDF14}.

\bibliography{ref3.bib}

\end{document}